\documentclass[psamsfonts]{amsart}

\usepackage{amssymb,amsfonts,amscd}

\usepackage[colorlinks,linktocpage]{hyperref}
\hypersetup{linkcolor=[rgb]{0,0,0.715}}
\hypersetup{citecolor=[rgb]{0,0.715,0}}

\newtheorem{thm}{Theorem}[section]
\newtheorem{cor}[thm]{Corollary}
\newtheorem{prop}[thm]{Proposition}
\newtheorem{lem}[thm]{Lemma}

\theoremstyle{definition}
\newtheorem{defn}[thm]{Definition}

\theoremstyle{remark}
\newtheorem{rem}[thm]{Remark}

\makeatletter
\let\c@equation\c@thm
\makeatother
\numberwithin{equation}{section}

\bibliographystyle{plain}

\setcounter{tocdepth}{1}

\title[]{Some topological results of Ricci limit spaces}

\author[]{Jiayin Pan, Jikang Wang}
\thanks{}

\thanks{J. Pan is partially supported by AMS Simons travel grant.}

\begin{document}
	
\maketitle
\begin{abstract}
	We study the topology of a Ricci limit space $(X,p)$, which is the Gromov-Hausdorff limit of a sequence of complete $n$-manifolds $(M_i, p_i)$ with $\mathrm{Ric}\ge -(n-1)$. Our first result shows that, if $M_i$ has Ricci bounded covering geometry, i.e. the local Riemannian universal cover is non-collapsed, then $X$ is semi-locally simply connected. In the process, we establish a slice theorem for isometric pseudo-group actions on a closed ball in the Ricci limit space. In the second result, we give a description of the universal cover of $X$ if $M_i$ has a uniform diameter bound; this improves a result in \cite{EnnisWei2006}.
\end{abstract}

\section{Introduction}

We study the topology of Ricci limit spaces. Let $(M_{i},p_{i})$ be a sequence of pointed complete Riemannian $n$-manifolds with a uniform Ricci curvature lower bound $\mathrm{Ric}\ge -(n-1)$. Passing to a subsequence if necessary, we can assume $(M_{i},p_{i})$ converges to $(X,p)$ in the pointed Gromov-Hausdorff sense. We call $(X,p)$ a Ricci limit space. $(X,p)$ is non-collapsing if Vol$(B_1(p_i))$ has a uniform lower bound. The regularity theory of Ricci limit spaces has been studied extensively by Cheeger, Colding, and Naber \cite{CheegerColding1997, CheegerColding2000a,CheegerColding2000b,CheegerNaber2013,CheegerNaber2015, ColdingNaber2012}. On the topology of $(X,p)$, Sormani and Wei proved that any Ricci limit space has a universal cover \cite{SormaniWei2001,SormaniWei2004}, but it is unknown whether the universal cover is simply connected by their work. Recently, Wei and the first author proved that any non-collapsing Ricci limit space is semi-locally simply connected \cite{PanWei2019}; in particular, such a limit space has a simply connected universal cover. 
More detailed discussions about the local topology of a Ricci limit space can be found in \cite{PanWei2019}.

In this paper, we prove two topological properties of a Ricci limit space. The first result is that any Ricci limit space with non-collapsing local rewinding volume is semi-locally simply connected.
More precisely, in the collapsing case, if the local universal cover of every $r$-ball in manifolds is non-collapsing, then the limit space is semi-locally simply connected.  

We start with notations in Ricci bounded covering geometry; see \cite{CRX2019,Huang2020,HKRX2020} for further reference. 
Let $B_{r}(p_i)$ be the open ball with center $p_{i}$ and radius $r$. We denote $\bar{B}_r(p_i)$ as the closure of $B_{r}(p_i)$. Let $\widetilde{B_{r}(p_{i})}$ be the (incomplete) universal cover of $B_{r}(p_{i})$ and we choose a point $\tilde{p}_i \in \widetilde{B_{r}(p_{i})}$ such that $\pi (\tilde{p}_i) = p_{i}$. Given the pull-back Riemannian metric on $\widetilde{B_{r}(p_{i})}$, we call the volume of $B_{r/4}(\tilde{p}_i)$ the local rewinding volume of $B_r(p_{i})$, denoted by $\widetilde{Vol}(B_r(p_i))$.

For any metric space $X$ and any $x \in X$, we follow the notation in \cite{PanWei2019} to define the local $1$-contractibility radius:
\begin{equation}\nonumber
	\rho (t,x)= \inf \{ \infty, \rho \geq t | \text{ any loop in } B_{t}(x) \text{ is contractible in } B_{\rho}(x) \}.  
\end{equation}
$X$ is semi-locally simply connected, if for any point $x \in M$, there is an open neighborhood containing $x$ such that any loop in this neighborhood is contractible in $X$. In particular, if for all $x \in M$ there exists $t$ (depending on $x$) such that $\rho(t,x)$ is finite, then $M$ must be semi-locally simply connected.

Now we state our first result:
\begin{thm} \label{semi}
	Let $(M_{i},p_{i})$ be a sequence of complete Riemannian manifolds converging to $(X,p)$ such that for all $i$, \\
	(1) $B_{4}(p_{i}) \cap \partial M_{i}= \emptyset$ and the closure of $B_{4}(p_{i})$ is compact,\\
	(2) $Ric \geq -(n-1)$ on $B_{4}(p_{i})$, $\widetilde{Vol}(B_4(p_{i})) \ge v >0$. \\
	Then $\lim \limits_{t \to 0} \rho(t,x) = 0$ holds for all $x \in B_{1/800}(p)$.
\end{thm}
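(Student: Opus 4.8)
The plan is to bootstrap from the non-collapsing case of Pan--Wei \cite{PanWei2019} by passing to the local Riemannian universal covers, where hypothesis (2) makes everything non-collapsed, and then pushing the resulting contractibility estimate back down through the quotient by the limiting deck pseudo-group.

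First, for each $i$ let $\pi_i\colon\widetilde{B_4(p_i)}\to B_4(p_i)$ be the (incomplete) universal cover with its pulled-back metric and deck group $\Gamma_i=\pi_1(B_4(p_i))$, and fix $\tilde p_i\in\pi_i^{-1}(p_i)$. By (2) we have $\mathrm{Ric}\ge-(n-1)$ on $\widetilde{B_4(p_i)}$ and $\mathrm{Vol}(B_1(\tilde p_i))=\widetilde{Vol}(B_4(p_i))\ge v$; Bishop--Gromov then propagates this to a definite lower volume bound on every small ball in a fixed-radius neighborhood of $\tilde p_i$, so the covers are uniformly non-collapsed there. After passing to a subsequence, take an equivariant pointed Gromov--Hausdorff limit $(\widetilde{B_4(p_i)},\tilde p_i,\Gamma_i)\to(\widetilde Y,\tilde y,G)$, so that a fixed-radius ball about $\tilde y$ in $\widetilde Y$ is a non-collapsed Ricci limit space with $\mathrm{Ric}\ge-(n-1)$, the group $G$ is a Lie group (the isometry group of a Ricci limit space is a Lie group \cite{CheegerColding2000a}) whose limiting action on a closed ball about $\tilde y$ is an isometric pseudo-group action, and the quotient $\widetilde Y/G$ is isometric to a concentric sub-ball of $B_4(p)\subset X$ containing $\overline{B_{1/400}(p)}$. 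Now apply Pan--Wei \cite{PanWei2019} to $\widetilde Y$: for every $\tilde z$ over $\overline{B_{1/800}(p)}$ one gets $\lim_{s\to0}\rho_{\widetilde Y}(s,\tilde z)=0$, uniformly in $\tilde z$ by the compactness in that argument, so there is a modulus $\psi$ with $\psi(s)\to0$ such that every loop in $B_s(\tilde z)\subset\widetilde Y$ contracts inside $B_{\psi(s)}(\tilde z)$.

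It remains to transfer this to $X$ near a fixed $x\in B_{1/800}(p)$. Pick $\tilde x\in\widetilde Y$ over $x$ with $d(\tilde x,\tilde y)<1/800$. By the slice theorem for the $G$-pseudo-group action established in this paper, near $\tilde x$ the action has a slice: the stabilizer $H=G_{\tilde x}$ is a compact subgroup, there is an $H$-invariant slice $S\ni\tilde x$, and $S/H$ is isometric to a neighborhood of $x$ in $X$ via $\pi$. Given a loop $\gamma$ in $B_t(x)$, Gromov's short-generator argument together with the $\delta$-cover formalism \cite{SormaniWei2001} and the volume lower bound on the covers let us represent $[\gamma]$, within a ball of radius $\rho'(t)$ with $\rho'(t)\to0$ as $t\to0$, by a product of loops each of which lifts through $S$ to a path in $\widetilde Y$ of length $\le\rho'(t)$ from $\tilde x$ to a point of the orbit $G\tilde x$. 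Appending an arc along $G\tilde x$ to close each lift up (such an arc has $\pi$-image the constant loop $x$, since a $G$-orbit is a single point of $\widetilde Y/G$, and it can be taken short once the relevant identity component of $H$ is used, any residual finite part of $H$ acting on the almost-contractible slice without obstructing contractibility), we obtain a loop $\hat\gamma$ in $B_{C\rho'(t)}(\tilde x)\subset\widetilde Y$ whose $\pi$-image is homotopic to $\gamma$ inside $B_{C\rho'(t)}(x)$, for a constant $C=C(n,v)$. By the previous paragraph $\hat\gamma$ contracts inside $B_{\psi(C\rho'(t))}(\tilde x)$, and projecting this homotopy by $\pi$ contracts $\gamma$ inside $B_{\psi(C\rho'(t))+C\rho'(t)}(x)\subset X$. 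Hence $\rho(t,x)\le\psi(C\rho'(t))+C\rho'(t)\to0$ as $t\to0$.

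The genuinely new difficulty, and the main obstacle, is the slice theorem used in the third step: even formulating a slice decomposition for isometric pseudo-group actions on a closed ball of a singular, non-collapsed Ricci limit space --- and controlling the stabilizers well enough that the descent above goes through --- requires real work, and this is precisely the slice theorem advertised in the abstract. Secondary technical points are extracting the uniform modulus $\psi$ (implicit in Pan--Wei's compactness argument) and the bookkeeping that keeps all the radii produced by Bishop--Gromov comparison on the covers, by the short-generator reduction, and by the slice neighborhoods within the margins afforded by the constants $1/800$ and $4$.
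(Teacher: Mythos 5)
Your overall strategy matches the paper's: pass to the local universal covers $\widetilde{B_4(p_i)}$, use hypothesis (2) to get a non-collapsed limit, apply Pan--Wei's Theorem~\ref{PW} upstairs, and then descend via the slice theorem for the limiting pseudo-group action, which you correctly flag as the genuine new difficulty. Two issues, however.

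First, a technical point you gloss over: the full incomplete covers $\widetilde{B_4(p_i)}$ need not admit a pointed GH limit at all (the paper cites \cite{SormaniWei2004} for exactly this), which is why Sections~3--5 work with the closed ball $\bar B_1(\tilde p_i)$ together with the truncated deck set $G_i=\{g:d(g\tilde p_i,\tilde p_i)\le 1/100\}$, and why the limit $G$ is only a pseudo-group. Writing ``$(\widetilde{B_4(p_i)},\tilde p_i,\Gamma_i)\to(\widetilde Y,\tilde y,G)$'' is not justified as stated; this is precisely the defect that forces the groupification machinery of Sections~4--5.

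Second, and more seriously, your transfer step does not actually use the slice in the way that makes it work. Once one has a slice $S$ with $S/G_{\tilde x}$ homeomorphic to a neighborhood of $x$, the paper's argument (Section~2.3 and the proof at the end of Section~5) is direct: shrink $\delta$ so that $\pi^{-1}(B_\delta(x))\cap S\subset B_{\delta'}(\tilde x)$, lift any loop $\gamma\subset B_\delta(x)$ through $S$, and --- because $G_{\tilde x}$ fixes $\tilde x$, so $\pi^{-1}(x)\cap S=\{\tilde x\}$ --- the lift is automatically a loop at $\tilde x$; then contract it by Theorem~\ref{PW} and push down. Your version instead invokes Gromov short generators and the $\delta$-cover formalism to decompose $[\gamma]$ inside a small ball into a product of short geodesic loops, then lifts these to paths ending at points of $G\tilde x$ and ``closes them up by arcs along $G\tilde x$.'' This is both unnecessary and unjustified: (i) decomposing an arbitrary (possibly non-rectifiable) loop in $B_t(x)$ into short geodesic loops \emph{within a shrinking ball} is essentially a local contractibility statement and is close to circular here; (ii) the closing arc along $G\tilde x$ need not exist (the orbit may be disconnected, and your remarks about ``the relevant identity component'' and ``residual finite part of $H$'' do not resolve this); and (iii) if the lift really is taken through $S$ as you say, it already terminates at $\tilde x$ and no closing-up is needed --- your paragraph conflates lifting via the slice with the naive lift in the cover. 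The slice homeomorphism is exactly what eliminates all of this bookkeeping, so a correct write-up should lean on it directly rather than re-deriving a weaker lifting statement from scratch.
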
  

Theorem \ref{semi} implies the following.
\begin{cor} \label{t0}
	Let $(M_{i},p_{i})$ be a sequence of complete Riemannian $n$-manifolds with $\mathrm{Ric}\ge -(n-1)$ and converging to $(X,p)$. If there exist $r$ and $v$ such that for all $i$ and any $x \in M_i$, $\widetilde{Vol}(B_r(x)) \ge v > 0$. Then $(X,p)$ is semi-locally simply connected.
\end{cor}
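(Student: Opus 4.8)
The strategy is to derive Corollary~\ref{t0} from Theorem~\ref{semi} by localizing and rescaling at an arbitrary point of $X$. Write $d$ for the metric on $X$, fix $x\in X$, and choose $q_i\in M_i$ with $q_i\to x$ under the Gromov--Hausdorff convergence, so that $(M_i,q_i)\to(X,x)$. Since semi-local simple connectivity is a pointwise condition which, as recalled above, follows once $\rho(t,x)$ is finite for some $t>0$ at every point, it suffices to produce one such $t$ for our fixed $x$; moreover this conclusion is unchanged under rescaling the metric of $X$, so we are free to rescale the sequence $(M_i,g_i)$.

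First I would reduce to the case $r\le 4$. The point is that the hypothesis of Corollary~\ref{t0} at scale $r$ implies the same hypothesis at scale $r/2$ with a new constant $v'=v'(n,r,v)>0$ (hence, by iteration, at every smaller scale). Indeed, writing $\pi_i\colon\widetilde{B_r(q_i)}\to B_r(q_i)$ for the covering projection, let $\widehat B_i$ be the connected component of $\pi_i^{-1}(B_{r/2}(q_i))$ containing the basepoint $\tilde q_i$; then $\widehat B_i\to B_{r/2}(q_i)$ is a covering, so it is itself covered by $\widetilde{B_{r/2}(q_i)}$, and lifting short paths shows that the natural map $\widetilde{B_{r/2}(q_i)}\to\widehat B_i$ restricts to a surjective local isometry of $B_{r/8}(\tilde q_i')$ onto $B_{r/8}(\tilde q_i)$ for a suitable preimage $\tilde q_i'$ of $\tilde q_i$. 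Hence
\[
\widetilde{Vol}(B_{r/2}(q_i))=\mathrm{Vol}\big(B_{r/8}(\tilde q_i')\big)\ \ge\ \mathrm{Vol}\big(B_{r/8}(\tilde q_i)\big)\ \ge\ v\cdot\frac{V_{-1}(r/8)}{V_{-1}(r/4)}\ >\ 0,
\]
where the last inequality is Bishop--Gromov relative volume comparison on $\widetilde{B_r(q_i)}$ (which carries the pulled-back metric with $\mathrm{Ric}\ge-(n-1)$), using $\mathrm{Vol}(B_{r/4}(\tilde q_i))=\widetilde{Vol}(B_r(q_i))\ge v$, and $V_{-1}(s)$ denotes the volume of an $s$-ball in the simply connected space form of curvature $-1$. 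Once $r\le 4$, set $\lambda=4/r\ge1$ and replace each $g_i$ by $\lambda^2 g_i$. The rescaled manifolds are still complete and boundaryless, $B_4^{\lambda^2 g_i}(q_i)=B_r^{g_i}(q_i)$ has compact closure, on this ball $\mathrm{Ric}_{\lambda^2 g_i}=\mathrm{Ric}_{g_i}\ge-(n-1)g_i=-(n-1)\lambda^{-2}(\lambda^2 g_i)\ge-(n-1)(\lambda^2 g_i)$, and $\widetilde{Vol}^{\lambda^2 g_i}(B_4^{\lambda^2 g_i}(q_i))=\lambda^n\,\widetilde{Vol}^{g_i}(B_r^{g_i}(q_i))\ge\lambda^n v>0$. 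Thus the hypotheses~(1) and~(2) of Theorem~\ref{semi} hold for the rescaled pointed sequence, which converges to $(X,\lambda d,x)$.

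Applying Theorem~\ref{semi} to this rescaled sequence yields $\lim_{t\to0}\rho(t,y)=0$ for every $y$ in the $(\lambda d)$-ball of radius $1/800$ about $x$; taking $y=x$ gives $\rho(t,x)<\infty$ for all sufficiently small $t>0$, i.e.\ an open metric ball about $x$ every loop in which is contractible in $X$. Since $x\in X$ was arbitrary, $X$ is semi-locally simply connected. The only step that requires a genuine argument, as opposed to bookkeeping, is the scale reduction in the second paragraph; the rest is an immediate application of Theorem~\ref{semi}.
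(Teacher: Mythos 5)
Your proof is correct and follows the natural route the paper intends: rescale the sequence and apply Theorem~\ref{semi} pointwise (the paper states Corollary~\ref{t0} as an immediate consequence of Theorem~\ref{semi} without supplying the details). The scale-halving step you carry out to reduce to $r\le 4$ --- needed because rescaling by $\lambda<1$ would spoil the Ricci lower bound, so one must instead pass from $\widetilde{Vol}(B_r)\ge v$ to $\widetilde{Vol}(B_{r/2})\ge v'$ via the covering $\widetilde{B_{r/2}(q_i)}\to\widehat B_i\subset\widetilde{B_r(q_i)}$ together with Bishop--Gromov --- is a genuine and correctly executed detail that the paper leaves implicit.
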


As mentioned, the universal cover of a Ricci limit space always exists \cite{SormaniWei2001,SormaniWei2004}. As a corollary, the universal cover of $X$ in Corollary \ref{t0} is always simply connected. 

To prove Theorem \ref{semi}, we establish the existence of a slice for pseudo-group actions on incomplete Ricci limits. Then Theorem \ref{semi} follows from this slice theorem and Pan-Wei's result \cite{PanWei2019}.

\begin{thm}\label{pseudo slice}
	Let $(M_{i},p_{i})$ be a sequence of connected Riemannian manifolds converging to $(X,p)$ such that for all $i$, $B_{4}(p_{i}) \cap \partial M_{i}= \emptyset$ and the closure of $B_{4}(p_{i})$ is compact. Let $(\widetilde{B_4(p_i)},\tilde{p_i})$ be the universal cover of $B_4(p_i)$ and let $\Gamma_i$ be the fundamental group of $B_4(p_i)$. The pseudo-group
	$$G_i=\{\gamma\in \Gamma_i| d(\gamma\tilde{p}_i,\tilde{p_1})\le 1/100 \}$$
	acts on $B_1(\tilde{p}_i)$. Passing to a subsequence, $(\bar{B}_1(\tilde{p}_i), \tilde{p}_i, G_i)$ converges to $(\bar{B}_1(\tilde{p}),\tilde{p},G)$ in the equivariant Gromov-Hausdorff sense. Then the following holds.
	
	For any $\tilde{x} \in B_{1/800}(\tilde{p})$, there is a slice $S$ at $\tilde{x}$ for pseudo-group $G$-action, that is, $S$ satisfies:\\
	(1) $\tilde{x}\in S$ and $S$ is $G_{\tilde{x}}$-invariant, where $G_{\tilde{x}}$ is the isotropy subgroup at $\tilde{x}$;\\
	(2) $S/G_{\tilde{x}}$ is homeomorphic to a neighborhood of $x$. 
\end{thm}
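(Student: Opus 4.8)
The plan is to (i) pin down the structure of the limit pseudo-group $G$ and of the isotropy $G_{\tilde{x}}$, (ii) reduce assertion~(2) to the existence of a \emph{classical} slice for the $G$-action on $\bar{B}_1(\tilde{p})$, and (iii) produce that slice by transporting a tubular-neighborhood decomposition from the approximating manifolds to the limit.

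First I would record that $G$ is compact (being an equivariant Gromov--Hausdorff limit of the uniformly compact families $G_i$) and that $G_{\tilde{x}}$ is a compact Lie group. For the latter, if $g=\lim_i\gamma_i$ with $\gamma_i\in G_i$ and $\gamma_i\tilde{x}_i\to\tilde{x}=\lim_i\tilde{x}_i$, then
\[
d(\gamma_i\tilde{p}_i,\tilde{p}_i)\le d(\tilde{p}_i,\tilde{x}_i)+d(\gamma_i\tilde{x}_i,\tilde{x}_i)+d(\gamma_i\tilde{x}_i,\gamma_i\tilde{p}_i)\to 2\,d(\tilde{x},\tilde{p})<1/400,
\]
using $\tilde{x}\in B_{1/800}(\tilde{p})$; hence products and inverses of such elements move $\tilde{p}_i$ by less than $1/100$ and are again represented in $G_i$, so $G_{\tilde{x}}$ is a genuine closed subgroup of $\mathrm{Isom}(\bar{B}_1(\tilde{p}))$, in particular compact. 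It is a Lie group because $\bar{B}_1(\tilde{p})$ lies in the interior of a Ricci limit space (a limit of the covers $\widetilde{B_4(p_i)}$), whose isometry group is Lie by the regularity theory of Cheeger--Colding and Colding--Naber; this is where the ambient Ricci structure is used. Since $d(g\tilde{x},\tilde{p})\le d(\tilde{x},\tilde{p})+d(g\tilde{p},\tilde{p})<1/800+1/100<1$, the orbit $O:=G\tilde{x}$ is a compact subset of $B_1(\tilde{p})$.

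Next I would show that a neighborhood of $x$ in $X$ is homeomorphic to a neighborhood of $[\tilde{x}]$ in $\bar{B}_1(\tilde{p})/G$. Quotients are compatible with equivariant Gromov--Hausdorff convergence (Fukaya--Yamaguchi), so $\bar{B}_1(\tilde{p}_i)/G_i\to\bar{B}_1(\tilde{p})/G$; and near $\tilde{x}$ the pseudo-group $G_i$ records every identification made by the full deck group $\Gamma_i$, since any $\gamma\in\Gamma_i$ carrying a point of $B_{1/800}(\tilde{p}_i)$ back into $B_{1/800}(\tilde{p}_i)$ satisfies $d(\gamma\tilde{p}_i,\tilde{p}_i)<1/400<1/100$ and hence lies in $G_i$. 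Thus $\bar{B}_1(\tilde{p}_i)/G_i$ coincides with $B_1(p_i)$ on a neighborhood of $[\tilde{x}_i]$ of fixed size, and the claim follows in the limit. It therefore suffices to produce a $G_{\tilde{x}}$-invariant set $S\ni\tilde{x}$ for which the evaluation $G\times_{G_{\tilde{x}}}S\to\bar{B}_1(\tilde{p})$ is injective with open image $G\cdot S$; then $(G\cdot S)/G\cong S/G_{\tilde{x}}$ is the neighborhood we want.

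For the slice I would take, for small $\delta>0$, the metric Voronoi domain of the orbit,
\[
S=\bigl\{\,\tilde{y}\in\bar{B}_\delta(\tilde{x})\ :\ d(\tilde{y},\tilde{x})=d(\tilde{y},O)\,\bigr\}.
\]
One checks at once that $\tilde{x}\in S$, that $S$ is $G_{\tilde{x}}$-invariant, and that $G\cdot S$ contains a neighborhood of $\tilde{x}$ (given $\tilde{z}$ near $\tilde{x}$, pick $g\in G$ with $g\tilde{x}$ a nearest point of the compact set $O$ to $\tilde{z}$; then $g^{-1}\tilde{z}\in S$). The hard part will be the \emph{slice property}: that $S$ meets each nearby orbit in a single $G_{\tilde{x}}$-orbit, equivalently that a point of $S$ has a unique nearest orbit point in $O$ once $\delta$ is small. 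On a manifold this is the normal--exponential description of a tube around $O$, but in the limit space there is neither an exponential map nor transversality, so I would instead extract it from the manifolds $\widetilde{B_4(p_i)}$: the orbit directions at $\tilde{x}$ are carried by a sub-pseudo-group $H_i\le G_i$ of small displacement at $\tilde{x}_i$ with $H_i\to G_{\tilde{x}}$, and combining the Cheeger--Colding almost-splitting theorem with the structure theory of such nearly-stabilizing groups (in the spirit of Fukaya--Yamaguchi and Kapovitch--Wilking) should show that a fixed-size neighborhood of $\tilde{x}_i$ in $\widetilde{B_4(p_i)}$ is $H_i$-equivariantly Gromov--Hausdorff close to a metric product $O\times S_i$ with $S_i$ a genuine slice in $\widetilde{B_4(p_i)}$; passing to the equivariant limit of this local product structure would then produce $S$ together with the identification of $S/G_{\tilde{x}}$ with a neighborhood of $x$. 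The delicate quantitative point is the non-degeneration of $S_i$ (so that $S$ is not a single point), and this is exactly where the additional hypotheses of the intended application enter --- the uniform lower Ricci bound, and in Theorem~\ref{semi} the bound $\widetilde{Vol}(B_4(p_i))\ge v$ forcing the covers $\widetilde{B_4(p_i)}$ to be non-collapsed.
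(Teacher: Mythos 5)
Your proposal follows a genuinely different route from the paper, and that route has gaps that would prevent it from proving the theorem as stated. The paper does not attempt any geometric construction of a slice; it \emph{groupifies} the pseudo-group $G$ into an honest Lie group $\hat{G}$ via a free-group quotient (Section 4, Lemmas \ref{lem_inj}--\ref{lem_embed} plus the no-small-subgroups lemma), builds a $\hat{G}$-space $\hat{G}\times_G \bar{B}_{1/250}(\tilde{p})$ on which $\hat{G}$ acts by homeomorphisms (Section 5), applies Palais's slice theorem (Theorem \ref{topo_slice}) to that action, and then transports the slice back through the local homeomorphisms of Lemma \ref{local homeo}. Nowhere is a tubular neighborhood or a Voronoi domain of the orbit used.

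The first gap is your appeal to an ambient Ricci limit space. You assert $G_{\tilde{x}}$ is Lie because "$\bar{B}_1(\tilde{p})$ lies in the interior of a Ricci limit space (a limit of the covers $\widetilde{B_4(p_i)}$)." But the full covers $\widetilde{B_4(p_i)}$ need not converge in the pointed Gromov--Hausdorff sense (the paper cites Sormani--Wei for exactly this), so there is in general no ambient space whose isometry group you can invoke. Moreover, elements of $G_{\tilde{x}}$ are only \emph{partial} isometries, defined on $\bar{B}_{1/10}(\tilde{p})$ with image in $\bar{B}_{1/3}(\tilde{p})$; they are not elements of $\mathrm{Isom}(\bar{B}_1(\tilde{p}))$, and making sense of composition and of a Lie topology on $G$ is precisely the work done in Lemma \ref{lemma_nofix}, Corollary \ref{Cor1}, and the construction of $\hat{G}$. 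The second and decisive gap is that your Voronoi slice argument is only a sketch, and by your own account its quantitative heart ("non-degeneration of $S_i$") requires the non-collapsing hypothesis $\widetilde{Vol}(B_4(p_i))\ge v$. But the paper states explicitly right after Theorem \ref{pseudo slice} that no volume lower bound on $B_1(\tilde{p}_i)$ is assumed there; the volume hypothesis enters only in Theorem \ref{semi}, where it is used to apply the Pan--Wei result (Theorem \ref{PW}) to the cover, not to produce the slice. A proof that needs non-collapsing proves less than the stated theorem, and in fact would undercut the point of separating Theorem \ref{pseudo slice} from Theorem \ref{semi}.
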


Note that we don't assume $B_{1}(\tilde{p}_i)$ has a volume lower bound in Theorem \ref{pseudo slice}. 

The idea to find the slice is that we \textit{extend} $G$ to a Lie group $\hat{G}$, which acts homeomorphically on an \textit{extended} space. Then we apply Palais's slice theorem \cite{Palais1961} for this $\hat{G}$-action. In this construction, we will show that the extended group and space are locally homeomorphic old ones, thus we can get the slice at $\tilde{x}$. See Sections 4 and 5 for details.

The next result of this paper is about the description of universal covers of Ricci limit spaces. Now we assume that $\mathrm{diam}(M_i)\le D$ for some $D>0$. Let $\widetilde{M_i}$ be the universal cover of $M_i$ with fundamental group $\Gamma_i$. Passing to a subsequence, we obtain the following convergence
\begin{center}
	$\begin{CD}
		(\widetilde{M}_i,\tilde{p}_i,\Gamma_i) @>GH>> (Y,\tilde{p},G)\\
		@VV\pi V @VV\pi V\\
		(M_i,p_i) @>GH>> (X,p),
	\end{CD}$
\end{center}
Ennis and Wei showed that there is $\epsilon=\epsilon(X)>0$ such that
$$(\widetilde{M}_i,\tilde{p}_i,\Gamma_i^\epsilon)\overset{GH}\longrightarrow(Y,\tilde{p},H^\epsilon)$$
and the universal cover of $X$ is isometric $Y/H^\epsilon$ \cite{EnnisWei2006}, where $\Gamma_i^\epsilon$ is generated by elements
$$\{ g\in \Gamma_i\ |\ d(q,gq)\le \epsilon \text{ for some } q\in \widetilde{M}_i\}.$$
We give a description of $H^\epsilon$ from the $G$-action on $Y$.

\begin{thm} \label{desc_cover}
	Let $M_i$ be a sequence of closed Riemannian $n$-manifolds with
	$$\mathrm{Ric}(M_i)\ge -(n-1),\quad \mathrm{diam}(M_i)\le D.$$
	Suppose that
	\begin{center}
		$\begin{CD}
			(\widetilde{M}_i,\tilde{p}_i,\Gamma_i) @>GH>> (Y,\tilde{p},G)\\
			@VV\pi_i V @VV\pi V\\
			(M_i,p_i) @>GH>> (X,p).
		\end{CD}$
	\end{center}
	Let $H$ be the group generated by $G_0$ and all isotropy subgroups of $G$, where $G_0$ is the identity component subgroup of $G$. Then $Y/H$ is the universal cover of $X$. Consequently, $\pi_1(X,p)$ is isomorphic to $G/H$. 
\end{thm}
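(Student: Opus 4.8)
The plan is to identify $H$ with the group $H^\epsilon$ of Ennis--Wei (for $\epsilon=\epsilon(X)$ as above), so that $Y/H=Y/H^\epsilon$ is the universal cover of $X$. First recall the ambient structure: since each $\widetilde{M}_i$ is complete with $\mathrm{Ric}\ge-(n-1)$, the limit $Y$ is a proper geodesic space, $G$ is a closed subgroup of $\mathrm{Isom}(Y)$ acting properly, and $\mathrm{Isom}(Y)$ --- hence $G$ --- is a Lie group (Colding--Naber), so $G_0$ is open in $G$. Because $H$ contains $G_0$, it is an open (hence also closed) subgroup of $G$; and it is normal, since conjugation fixes $G_0$ and permutes the isotropy subgroups ($gG_yg^{-1}=G_{gy}$).

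\emph{Step 1: $H\subseteq H^\epsilon$.} If $g\in G$ satisfies $d(\tilde{p},g\tilde{p})<\epsilon$, choose $\gamma_i\in\Gamma_i$ with $\gamma_i\to g$; then $d(\tilde{p}_i,\gamma_i\tilde{p}_i)\to d(\tilde{p},g\tilde{p})<\epsilon$, so $\gamma_i\in\Gamma_i^\epsilon$ for large $i$ and $g\in H^\epsilon$. The set $\{g\in G:d(\tilde{p},g\tilde{p})<\epsilon\}$ is an open neighborhood of the identity, and the subgroup it generates contains $G_0$; hence $G_0\subseteq H^\epsilon$. Likewise, if $g$ lies in an isotropy subgroup $G_y$, pick $y_i\in\widetilde{M}_i$ with $y_i\to y$ and $\gamma_i\to g$; then $d(y_i,\gamma_i y_i)\to d(y,gy)=0$, so $\gamma_i\in\Gamma_i^\epsilon$ for large $i$ and $g\in H^\epsilon$. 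Since $H$ is generated by $G_0$ together with the isotropy subgroups, this gives $H\subseteq H^\epsilon$; the same computation shows every isotropy subgroup lies in $H^\epsilon$, which I record for later.

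\emph{Step 2: $Y/H\to Y/H^\epsilon$ is a covering map.} As $H$ is open and normal in $H^\epsilon$, the quotient $H^\epsilon/H$ is discrete and acts on $Y/H$ with quotient $Y/H^\epsilon$, the fibers being precisely its orbits. This action is free: a coset $h^\epsilon H$ fixes the orbit $Hy$ iff $h^\epsilon y\in Hy$, i.e. iff $h^\epsilon\in HG_y=H$, using $G_y\subseteq H$ --- the defining property of $H$. It is properly discontinuous: $G$ acts properly on the proper space $Y$; since $H$ is open in $G$ a compact subset of $G$ meets only finitely many cosets of $H$, and since $H$ is normal $HgH=Hg$, so a standard verification shows the residual $G/H$-action, and a fortiori the $H^\epsilon/H$-action, on $Y/H$ is properly discontinuous. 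Hence $Y/H\to Y/H^\epsilon$ is a covering, and $Y/H$ is connected, being a continuous image of the connected space $Y$. I expect this descent of properness to be the most delicate point: that the proper (non-discrete, non-compact) $G$-action on $Y$ induces a properly discontinuous action of the relevant discrete groups on the non-compact quotient $Y/H$. The remaining ingredients --- Ricci limits are proper and locally path connected, $\mathrm{Isom}(Y)$ is Lie, and the Sormani--Wei universal cover is a regular covering with its universal property --- are standard.

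\emph{Step 3: conclusion.} By Ennis--Wei \cite{EnnisWei2006}, $\widetilde{X}:=Y/H^\epsilon$ is the universal cover of $X$; moreover, since $G_y\subseteq H^\epsilon$ for all $y$ (Step 1), the residual $G/H^\epsilon$-action on $\widetilde{X}$ is free and properly discontinuous, so $\widetilde{X}\to X$ is a regular covering. Now $Y/H$ is a connected covering of $\widetilde{X}$, and hence of $X$; by the universal property of $\widetilde{X}$ there is a covering $s\colon\widetilde{X}\to Y/H$ over $X$, so that $r\circ s\colon\widetilde{X}\to\widetilde{X}$ (with $r\colon Y/H\to\widetilde{X}$ the covering of Step 2) commutes with the projection to $X$ and is therefore a deck transformation, hence a homeomorphism; this forces $r$ to be single-sheeted, i.e. $Y/H\cong\widetilde{X}$. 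Thus $Y/H$ is the universal cover of $X$ (and $H=H^\epsilon$). Finally, $G/H$ acts on $Y/H$ since $H\trianglelefteq G$, freely (a coset $gH$ fixes $Hy$ iff $g\in HG_y=H$) and properly discontinuously (as in Step 2), with quotient $Y/G=X$; hence $Y/H\to X$ is a regular covering with deck group $G/H$, and therefore $\pi_1(X,p)\cong G/H$.
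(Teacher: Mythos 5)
Your proof is correct and takes a genuinely different route from the paper's. You work entirely on the limit side: you verify the easy inclusion $H\subseteq H^\epsilon$ (which the paper also notes as a remark) by approximating generators of $G_0$ and of each isotropy group $G_y$ by elements of $\Gamma_i^\epsilon$, and then deduce $H=H^\epsilon$ from abstract covering-space theory, using that $H$ is open and normal in $G$ so that $H^\epsilon/H$ and $G/H$ are discrete and act freely and properly discontinuously on $Y/H$, together with the universal property of the Sormani--Wei universal cover. The paper instead establishes the reverse inclusion at the manifold level: it proves a gap lemma (Lemma~\ref{gap}) showing that $G^r$ stabilizes to $H$ for small $r$, invokes Fukaya--Yamaguchi's Theorem~3.10 to produce subgroups $K_i\le\Gamma_i$ converging equivariantly to $G^\eta=H$, and then identifies $K_i=\Gamma_i^\eta$ for large $i$. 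Each approach buys something: the paper's yields the gap lemma and the explicit convergence $(\widetilde{M}_i,\tilde{p}_i,\Gamma_i^\eta)\to(Y,\tilde{p},G^\eta)$, both of independent interest, while yours avoids the equivariant-convergence machinery and is shorter, at the price of the covering-space verifications you correctly flag as delicate --- in particular, that properness of the $G$-action on the proper space $Y$ descends to proper discontinuity of the $G/H$-action on $Y/H$ because $H$ is open and normal (a compact subset of $G$ meets finitely many $H$-cosets, and normality lets one collapse $HgH$ to $gH$). Two small points worth tightening: the verification that $Y/H\to X$ is a covering, which you give only at the end of Step~3, should logically precede the appeal to the universal property of $\widetilde{X}$, since that property applies only once $Y/H$ is recognized as a covering of $X$; and the conclusion that the mutual coverings $r$ and $s$ are inverse to one another is most cleanly obtained from uniqueness of lifts (both $r\circ s$ and the identity are basepoint-preserving lifts of the projection $\widetilde{X}\to X$ through itself), which is available here because $Y$, hence $Y/H$ and $\widetilde{X}$, are length spaces and therefore locally path-connected.
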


Note that $H$ is a subgroup of $H^\epsilon$. Theorem \ref{desc_cover} shows that they are indeed the same. This improvement relies on the fact that $\mathrm{Isom}(Y)$ is a Lie group \cite{CheegerColding2000a,ColdingNaber2012}.

We organize the paper as follows. In Section 2, we recall some preliminaries, which include Ricci limit spaces and Palais's slice theorem. In Section 3, we study the limit pseudo-group actions on a closed ball of the Ricci limit space. To prove Theorem \ref{pseudo slice} next, we will use the limit pseudo-group $G$ to construct a Lie group $\hat{G}$ in Section 4, then we construct a $\hat{G}$-space and apply Palais's theorem to find a slice on the new space in Section 5. We prove Theorem \ref{desc_cover} in Section 6.

The authors would like to thank Xiaochun Rong for many helpful discussions.

\tableofcontents

\section{Preliminaries}

\subsection{Ricci limit spaces}

One important result about Ricci limit spaces that we need in this paper is their isometry groups are always Lie groups. \cite{CheegerColding2000a} proves the non-collapsing case, then \cite{ColdingNaber2012} proves the general case.
\begin{thm}\cite{CheegerColding2000a,ColdingNaber2012}\label{isom_lie}
	The isometry group of any Ricci limit space is a Lie group.
\end{thm}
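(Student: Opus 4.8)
The plan is to reduce the theorem to the solution of Hilbert's fifth problem and then to exclude small subgroups using the Cheeger--Colding--Naber structure theory of Ricci limits. First the soft part: a Ricci limit space $X$ is a complete geodesic metric space which is \emph{proper} --- closed balls are compact, by Gromov's precompactness theorem together with Bishop--Gromov volume comparison --- so by the van~Dantzig--van~der~Waerden theorem (essentially an Arzel\`a--Ascoli argument) the isometry group $\mathrm{Isom}(X)$ with the compact--open topology is a locally compact, second countable topological group acting properly and continuously on $X$. By the Gleason--Montgomery--Zippin--Yamabe theorem a locally compact group is a Lie group if and only if it has \emph{no small subgroups}, i.e.\ some neighbourhood of the identity contains no nontrivial subgroup. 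So it suffices to prove that $\mathrm{Isom}(X)$ has no small subgroups, and we argue by contradiction: suppose $\mathrm{Isom}(X)$ contains nontrivial compact subgroups $K_j$ contracting to the identity.

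Fix $x\in X$; then the orbits $K_j x$ have diameter $\epsilon_j\to 0$. Rescaling the metric by $\epsilon_j^{-1}$ and passing to an equivariant pointed Gromov--Hausdorff limit of $(X,\epsilon_j^{-1}d,x,K_j)$ produces a tangent cone $(X_\infty,x_\infty)$ of $X$ at $x$ --- again a Ricci limit space, now with $\mathrm{Ric}\ge 0$ --- carrying an isometric action of a nontrivial closed subgroup $K_\infty\le\mathrm{Isom}(X_\infty)$, nontrivial because $K_\infty x_\infty$ has diameter $1$. By the Gleason--Yamabe structure theory one may take $K_\infty$ to be a circle, a one-parameter subgroup $\mathbb{R}$, or a $p$-adic group $\mathbb{Z}_p$, and it inherits from the $K_j$ the feature of being a limit of arbitrarily small subgroups. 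The contradiction is now extracted from the fine structure of the blow-up $X_\infty$. In the \emph{non-collapsed} case (Cheeger--Colding), $X_\infty$ is a metric cone $\mathbb{R}^k\times C(Z)$ with vertex $x_\infty$ and the $n$-regular set of $X$ is dense with tangent cone $\mathbb{R}^n$; a vanishingly small $\mathbb{Z}_p$-action transported to a tangent cone contradicts the Hilbert--Smith theorem for manifolds when that cone is Euclidean, and this is propagated through the singular stratification by restricting the action to the cross-section $Z$ and inducting on dimension, the circle and $\mathbb{R}$ cases being handled by analogous blow-up and almost-rigidity arguments. In the general, possibly \emph{collapsed}, case there is no cone structure a priori and the geometry of $B_r(x)$ may degenerate as $r\to 0$; this is precisely where Colding--Naber's sharp H\"older continuity of the geometry of small metric balls along minimal geodesics is indispensable, since it supplies the uniform control across scales and base points needed to run the rescaling argument.

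I expect this last step --- showing that a limit of arbitrarily small isometric subgroups acting on a tangent cone of a Ricci limit must be trivial --- to be the whole difficulty, which is why the theorem is quoted here rather than reproved. The reduction to ``no small subgroups'' via Hilbert's fifth problem and the extraction of the equivariant blow-up are routine; but the geometric core is essentially a Hilbert--Smith-type statement for Ricci limit spaces, and its proof requires the full strength of the Cheeger--Colding regular/singular theory in the non-collapsed case and of the Colding--Naber H\"older estimate in general.
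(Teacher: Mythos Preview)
Your reduction to the no-small-subgroups criterion via Gleason--Yamabe is correct and matches what the paper recalls. But your proposed mechanism for ruling out small subgroups --- blow up at a \emph{fixed} point $x$ by the orbit diameter, land on a tangent cone, and appeal to Hilbert--Smith --- has genuine gaps and is not the Cheeger--Colding/Colding--Naber argument. The small subgroups $K_j$ may well fix your chosen $x$, so the normalization $\epsilon_j=\mathrm{diam}(K_jx)$ can vanish; even when $\epsilon_j>0$, the tangent cone at $x$ need not be Euclidean, and neither your ``induction on the cross-section'' nor the assertion that $K_\infty$ ``inherits the feature of being a limit of arbitrarily small subgroups'' is justified --- after rescaling, $K_\infty$ is merely some compact subgroup of $\mathrm{Isom}(X_\infty)$ with no residual smallness to exploit. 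Most importantly, nothing in your setup ties the orbit-diameter scale $\epsilon_j$ to the regularity scale of the basepoint, so you cannot force the rescaled limit to be $\mathbb{R}^k$.

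The actual argument, sketched in the paper immediately after the statement (and implemented locally in Sections~3 and~4), chooses the blow-up point \emph{adaptively} so that the limit is always Euclidean, bypassing Hilbert--Smith entirely. One works with the displacement $D_{H,r}(z)=\sup_{w\in B_r(z)}\sup_{h\in H}d(w,hw)$, which is continuous in $(r,z)$. For a nontrivial small subgroup $H$ one locates a regular point $w\in(\mathcal{R}_k)_{\epsilon,2\theta}$ moved by $H$, so that $D_{H,\theta}(w)\ge\theta/20$, and another regular point $z\in(\mathcal{R}_k)_{\epsilon,2\eta}$ where $D_{H,\eta}(z)\le\eta/20$. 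The Colding--Naber H\"older continuity of small balls along interior geodesics furnishes a path from $w$ to $z$ lying inside $(\mathcal{R}_k)_{\epsilon,\delta}$; the intermediate value theorem along this path and in $r$ then yields, for every $\epsilon>0$, a point $x(\epsilon)\in(\mathcal{R}_k)_{\epsilon,\tau(\epsilon)}$ and a scale $r(\epsilon)<\tau(\epsilon)$ with $D_{H,r(\epsilon)}(x(\epsilon))=r(\epsilon)/20$ exactly. Rescaling by $r(\epsilon)^{-1}$ and sending $\epsilon\to 0$ gives --- because $x(\epsilon)$ is $\epsilon$-regular at that very scale --- a nontrivial subgroup of $\mathrm{Isom}(\mathbb{R}^k)$ with displacement $1/20$ on the unit ball, which is impossible since $\mathrm{Isom}(\mathbb{R}^k)=O(k)\ltimes\mathbb{R}^k$ is already a Lie group. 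Thus Colding--Naber enters not as vague ``uniform control across scales'' but specifically to supply the path-connectedness of $(\mathcal{R}_k)_{\epsilon,\delta}$ needed for the intermediate-value step.
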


We recall some of elements in the proof of Theorem \ref{isom_lie} since we need them later in Section 3.  

Let $(X,p)$ be a Ricci limit space. For any integer $k \le n$, $\mathcal{R}_k$ is the set of points where every tangent cone is isometric to $\mathbb{R}^k$. We say $y \in (\mathcal{R}_{k})_{\epsilon, \delta}$, if for all $0 < r < \delta$ we have $d_{GH}(B_r(y), B_r^{k}(0)) < \epsilon r$, where $B_r^{k}(0)$ is the $r$-ball in $\mathbb{R}^k$. Let $(\mathcal{R}_{k})_{\epsilon} = \cup_{\delta} (\mathcal{R}_{k})_{\epsilon, \delta}$. Note that $\mathcal{R}_k=\cap_{\epsilon>0} (\mathcal{R}_k)_\epsilon$.

A topological group is said to have small subgroups if every neighborhood of the identity contains a non-trivial subgroup. By \cite{Gleason1952, Yamabe1953}, a topological group with no small groups is a Lie group. For the isometry group of a metric space $X$, we can measure the smallness of a subgroup by its displacement. For any subgroup $H$ of $\mathrm{Isom}(X)$ and $x \in X$, we define 
$$\rho_H(x)=\sup_{h \in H} d(x, hx),\quad D_{H,r}(x)= \sup_{w \in B_{r}(x)} \rho_H(w)$$ be the displacement function of $H$ on $B_r(x)$. A standard fact is that there is no non-trivial group $H \subset$ Isom$(\mathbb{R}^{k})$ such that $D_{H,1}(0) \le 1/20$, where $0 \in \mathbb{R}^{k}$. 

Using certain path connectedness property of $(\mathcal{R}_{k})_{\epsilon, \delta}$, where $k$ is the dimension of $X$ in the sense of \cite{ColdingNaber2012}, the proof of Theorem \ref{isom_lie} rules out a sequence of nontrivial subgroups $H_i$ with $D_{H_i,R}(z)\to 0$ for all $R>0$ and all $z\in X$. A similar idea is applied in \cite[Lemma 2.1]{PanRong2018} to prove the following.

\begin{prop}\cite{PanRong2018}\label{PR_nofoxball}
	Let $X$ be a Ricci limit space and let $g$ be an isometry of $X$. If $g$ fixes every point in $B_1(x)$, where $x\in X$, then $g$ fixes every point in $X$.
\end{prop}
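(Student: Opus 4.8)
The plan is to argue by contradiction, following the mechanism behind Theorem~\ref{isom_lie} for ruling out ``small'' subgroups. Suppose $g$ fixes $B_1(x)$ pointwise but $g\ne\mathrm{id}_X$. Since $X$ is a proper length space it is connected, and $F:=\{y\in X:gy=y\}$ is a closed proper subset of $X$ containing the open ball $B_1(x)$, so $X\setminus F$ is open and nonempty. Because $g$ fixes $x$, the closure $H:=\overline{\langle g\rangle}$ lies in the (compact) isotropy group of $x$, so $H$ is a nontrivial compact subgroup of $\mathrm{Isom}(X)$ fixing $B_1(x)$ pointwise; it suffices to derive a contradiction from the existence of such an $H$, and for it the functions $\rho_H$ and $D_{H,r}(\cdot)$ are finite and continuous. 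Let $k\le n$ be the dimension of $X$ in the sense of \cite{ColdingNaber2012}, so $\mathcal R_k$ has full measure; fix $\epsilon_0>0$ smaller than the rigidity constant invoked below, and then $\delta_0>0$ small enough that $(\mathcal R_k)_{\epsilon_0,\delta_0}$ meets both $B_{1/2}(x)$ and $X\setminus F$ and enjoys the path connectedness of \cite{ColdingNaber2012}.

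First I would produce, for a fixed $\eta\in(0,1/20]$, an ``almost fixed'' configuration: a point $y\in(\mathcal R_k)_{\epsilon_0,\delta_0}$ and a scale $r\in(0,\min\{1/2,\delta_0\})$ with
\[
D_{H,r}(y)=\eta\,r.
\]
To find one, pick $y_0\in B_{1/2}(x)\cap(\mathcal R_k)_{\epsilon_0,\delta_0}$ and $y_1\in(X\setminus F)\cap(\mathcal R_k)_{\epsilon_0,\delta_0}$, and join them by a continuous path $\sigma\colon[0,1]\to(\mathcal R_k)_{\epsilon_0,\delta_0}$ using the path connectedness property. For each $r$ small enough that $\eta r<\rho_H(y_1)$, the function $t\mapsto D_{H,r}(\sigma(t))$ is continuous, equals $0$ at $t=0$ (because $B_r(y_0)\subset B_1(x)\subset F$), and is at least $\rho_H(y_1)>\eta r$ at $t=1$, so the intermediate value theorem supplies $y=y_r$ on $\sigma$ with $D_{H,r}(y_r)=\eta r$.

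Next I would rescale and take a limit. Fix $r_j\downarrow0$ and consider $(X,r_j^{-1}d,y_{r_j})$ with the $H$-action. Since $y_{r_j}\in(\mathcal R_k)_{\epsilon_0,\delta_0}$, in the rescaled metric $d_{GH}(B_R(y_{r_j}),B_R^k(0))<\epsilon_0R$ as soon as $Rr_j<\delta_0$; hence, along a subsequence, $(X,r_j^{-1}d,y_{r_j})$ converges to a pointed space $C$ with $d_{GH}(B_R(o),B_R^k(0))\le\epsilon_0R$ for every $R>0$, and the all--scales almost--rigidity forces $C$ to be isometric to $\mathbb R^k$ (this uses $\epsilon_0$ small). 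The relation $D_{H,r_j}(y_{r_j})=\eta r_j$ bounds the displacement of $H$ on the rescaled unit ball $B_1(y_{r_j})$ by $\eta$, so after a further subsequence the $H$-actions converge equivariantly to an isometric action on $\mathbb R^k$, whose effective image $H_\infty\subset\mathrm{Isom}(\mathbb R^k)$ satisfies $D_{H_\infty,1}(o)\le\eta\le1/20$. On the other hand $D_{H,r_j}(y_{r_j})$ is attained at some $w_j$ with $d(y_{r_j},w_j)\le r_j$, so in the limit there is $w_\infty\in\bar B_1(o)$ with $\rho_{H_\infty}(w_\infty)=\eta>0$, whence $H_\infty$ is nontrivial. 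This contradicts the standard fact that no nontrivial subgroup of $\mathrm{Isom}(\mathbb R^k)$ has $D_{H_\infty,1}(0)\le1/20$.

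The step I expect to be the main obstacle is the middle one: making the failure of $F=X$ visible at an infinitesimal scale where the blow--up is Euclidean. Blowing up directly at a point of $\partial F$ does not work, since the points moved by $g$ near such a point may be displaced by an amount negligible relative to the scale, so the limiting isometry could be trivial; one must instead transport the ``partially moved'' scale, along a path of nearly Euclidean points, back to a point genuinely outside $F$, which is precisely what the path connectedness of $(\mathcal R_k)_{\epsilon,\delta}$ from \cite{ColdingNaber2012} provides --- the same ingredient used to exclude small isometry subgroups in the proof of Theorem~\ref{isom_lie}. The remaining points --- properness of $X$ and the Lie property of $\mathrm{Isom}(X)$ (Theorem~\ref{isom_lie}), compactness of isotropy groups, continuity of $y\mapsto D_{H,r}(y)$, convergence of displacement functions under equivariant Gromov--Hausdorff convergence, and the all--scales Euclidean rigidity --- are standard.
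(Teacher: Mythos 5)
Your proposal follows the same mechanism as the paper's (the argument of \cite{PanRong2018}, recapitulated locally in Lemma~\ref{lemma_nofix}): locate, via path connectedness of the $\epsilon$-regular set and the intermediate value theorem applied to $D_{H,r}(\cdot)$, a point with displacement equal to a definite fraction of the scale; blow up; and contradict the fact that $\mathrm{Isom}(\mathbb{R}^k)$ has no nontrivial subgroup with displacement $\le 1/20$ on the unit ball. The structure is right, but there is a genuine gap in the blow-up step.

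You fix $\epsilon_0$ once, take $r_j\downarrow 0$, obtain a limit $C$ with $d_{GH}(B_R(o),B_R^k(0))\le \epsilon_0 R$ for all $R$, and then assert that ``all-scales almost-rigidity forces $C$ to be isometric to $\mathbb{R}^k$.'' This is not a theorem, and in fact it fails: the metric cone $C\bigl(S^{k-1}(1-\eta)\bigr)$ is, by scale invariance, $\psi(\eta)$-GH close to $\mathbb{R}^k$ on every ball $B_R(o)$, so for $\eta$ small it satisfies your hypothesis with the given $\epsilon_0$, yet it is not $\mathbb{R}^k$ --- and, worse for you, its isometry group contains rotations of arbitrarily small displacement. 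So even if your blow-up produced such a cone, the limiting subgroup $H_\infty$ would be a nontrivial small-displacement subgroup of $\mathrm{Isom}(C)$, which is perfectly possible and yields no contradiction. The ``standard fact'' you invoke is specific to $\mathbb{R}^k$ and does not survive a fixed-$\epsilon_0$ approximation; nothing about your $y_{r_j}$ (which are moving points, not a single base point, so $C$ is not a tangent cone of $X$) pins $C$ down as Euclidean.

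The fix is exactly what the paper does in Lemma~\ref{lemma_nofix}: let $\epsilon\to 0$ together with the scale. That is, for each $\epsilon_\ell\downarrow 0$ run your IVT argument inside $(\mathcal{R}_k)_{\epsilon_\ell,\delta_\ell}$ to produce $y_\ell$ and $r_\ell$ with $D_{H,r_\ell}(y_\ell)=\eta\, r_\ell$ and $r_\ell<\delta_\ell$, and only then pass to the GH limit of $\bigl(B_{r_\ell}(y_\ell),\ r_\ell^{-1}d,\ H\bigr)$. The rescaled unit ball is then $\epsilon_\ell$-close to $B_1^k(0)$ with $\epsilon_\ell\to 0$, so the limit is exactly $(B_1^k(0),d_0)$, and the contradiction with $\mathrm{Isom}(\mathbb{R}^k)$ having no small subgroups is legitimate. (A secondary but related slip: you must also make $\delta_\ell$ small together with $\epsilon_\ell$; the set $(\mathcal{R}_k)_\epsilon=\bigcup_\delta(\mathcal{R}_k)_{\epsilon,\delta}$ only controls balls of radius $<\delta$, so your claim that $y_{r_j}\in(\mathcal{R}_k)_{\epsilon_0,\delta_0}$ for a single $\delta_0$ along the whole path needs the choice of $\delta_0$ to be part of the diagonalization as well.)
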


We will prove a local version of Proposition \ref{PR_nofoxball} in Section 3, which is essential to study pseudo group actions. The main difference is that we are considering limit space that comes from a sequence of incomplete manifolds.

We also recall the main theorem in \cite{PanWei2019}:
\begin{thm}\label{PW}
	Let $(M_{i},p_{i})$ be a sequence of connected Riemannian $n$-manifolds (not necessarily complete) converging to $(Y,p)$ such that for all $i$, \\
	(1) $B_{4}(p_{i}) \cap \partial M_{i}= \emptyset$ and the closure of $B_{4}(p_{i})$ is compact,\\
	(2) $\mathrm{Ric} \geq -(n-1)$ on $B_{4}(p_{i})$, $Vol(B_{4}(p_{i})) \geq v >0$. \\
	Then
	$\lim \limits_{t \to 0} \rho(t,x)=0$
	holds for any $x \in B_{2}(p)$. 
\end{thm}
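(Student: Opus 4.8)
Since the final statement above is Theorem \ref{PW}, which the excerpt explicitly attributes to Pan--Wei \cite{PanWei2019} and which we are entitled to assume, I will only sketch how one would prove it from scratch, following the strategy of that paper.

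The plan is to argue by contradiction combined with a rescaling/blow-up analysis. Suppose $\lim_{t\to 0}\rho(t,x)>0$ for some $x\in B_2(p)$; then there is a sequence of radii $t_j\to 0$ and loops $\sigma_j\subset B_{t_j}(x)$ that are not contractible in $B_{\rho_0}(x)$ for some fixed $\rho_0>0$. First I would pull everything back to the approximating manifolds $M_i$: for suitable $i=i(j)$ there are points $x_i\to x$ and loops $\sigma_{i}$ in small balls around $x_i$ that remain noncontractible at a definite scale after projecting the homotopy back down. The key quantitative input is the volume lower bound $\mathrm{Vol}(B_4(p_i))\ge v$, which by relative volume comparison gives a uniform two-sided volume control on all small balls $B_s(x_i)$ with $x_i\in B_2(p_i)$; this is what prevents arbitrarily ``thin'' handles. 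Using Cheeger--Colding theory (segment inequality, almost-volume-cone/almost-metric-cone, and the existence of good covering geometry at small scales in the non-collapsed setting), one shows that after rescaling the metric by $1/t$ the balls $B_1(x_i)$, as $t\to 0$, subconverge to a tangent-type limit that is non-collapsed, and on such a limit one has enough regularity (a dense regular set, local connectedness, and in fact a local contractibility estimate for the limit) to produce a contradiction with the persistence of the noncontractible loop.

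Concretely, the heart of the argument is a \emph{uniform local contractibility estimate at the manifold level}: there is a function $\rho$ with $\rho(t)\to 0$ as $t\to 0$ such that for all $i$ and all $x_i\in B_2(p_i)$, every loop in $B_t(x_i)$ is contractible in $B_{\rho(t)}(x_i)$. To get this, I would run the standard Cheeger--Colding ``bad scales are rare'' / ``good ball'' machinery: at most scales $s$ the ball $B_s(x_i)$ is $\epsilon s$-close to a ball in a product $\mathbb{R}^k\times C(Z)$, and by iterating through scales one builds an explicit contraction of any small loop, the size of the contraction being controlled by how far down one must go before hitting a ``good'' (almost-Euclidean or almost-conical) scale. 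The volume lower bound enters to guarantee that such good scales occur with controlled frequency (via the monotonicity of the volume ratio and the fact that a definite drop in volume ratio cannot happen at too many scales). Passing this manifold-level estimate to the GH limit $Y$ then yields $\lim_{t\to 0}\rho(t,x)=0$ on $B_2(p)$, because loops in $Y$ near $x$ and their contractions can be approximated by loops and homotopies in $M_i$.

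The main obstacle is precisely this uniform local contractibility estimate on the $M_i$: unlike the sectional curvature case (where one has convexity radius bounds and can contract geodesically), under only a Ricci lower bound one has no a priori control on the topology of small balls, so the contraction must be built scale-by-scale using the metric measure structure, and one must carefully track that the accumulated ``overhead'' of the contraction stays within $B_4(p_i)$ (hence the restriction to $x\in B_2(p)$ and the radius $4$ in the hypotheses) and tends to $0$ with $t$. A secondary technical point is handling the incompleteness of the $M_i$: all the comparison geometry must be localized to $B_4(p_i)$ and one must ensure geodesics and the good balls used in the iteration never reach the (possibly nonempty) boundary, which is exactly what condition (1), $B_4(p_i)\cap\partial M_i=\emptyset$ with $\bar B_4(p_i)$ compact, is there to guarantee. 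Once these are in place, the contradiction/limit argument is routine.
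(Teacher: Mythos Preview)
You correctly identify that Theorem~\ref{PW} is not proved in this paper; it is simply recalled from \cite{PanWei2019} as a preliminary (the paper even notes that the stronger statement $\lim_{t\to 0}\rho(t,x)/t=1$ is what is actually established there). Since the paper's ``proof'' is a citation, there is nothing to compare your sketch against here. Your outline is broadly in the right spirit, though note that the actual argument in \cite{PanWei2019} is organized primarily around the contradiction/rescaling route you describe first (pass to a blow-up limit, contract the loop there, and pull the homotopy back), rather than via a direct scale-by-scale uniform contractibility estimate on the $M_i$; the latter is closer in flavor to the sectional-curvature/Perelman-style arguments and is not how Pan--Wei proceed.
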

In fact, a stronger result $\lim\limits_{t\to 0} \rho(t,x)/t=1$ is proved in \cite{PanWei2019}.

\subsection{Palais's slice theorem}

We review the definition of a slice and Palais's theorem \cite{Palais1961} in this subsection.

Let $X$ be a completely regular topological space and let $G$ be a Lie group. We say that $X$ is a $G$-space, if $G$ acts on $X$ by homeomorphisms. For any $x \in X$, we define isotropy subgroup $G_{x}= \{ g \in G | \ gx=x \}$. If $G$-action on $X$ is proper, then $G_x$ is compact for all $x\in X$. We say a subset $S\subset X$ is $G_x$-invariant, if $G_x S=S$. For $G_x$-invariant set $S$, we can define an equivalence relation on $G \times S$ by claiming $(g,s) \sim (gh^{-1}, hs)$ for all $g \in G, h \in G_{x}, s \in S$.
We denote the corresponding quotient space as 
$$G \times_{G_x} S = G \times S / \sim.$$ 
There is a natural left $G$ action on $G \times_{G_x} S$ given by $g[g',s]=[gg',s]$, where $g \in G$ and $[g',s] \in G \times_{G_x} S$. 

\begin{defn}
   We call $S \subset X$ a $G_x$-slice (briefly, a slice) at $x$ if the followings hold: \\
(1) $x\in S$ and $S$ is $G_x$-invariant; \\
(2) $GS$ is an open neighborhood of $x$; $\psi: [g,s] \mapsto gs$ is a $G$-homeomorphism between $G \times_{G_x} S$ and $GS$, that is, $\psi$ is a homeomorphism and $\psi \circ g = g \circ \psi$ for all $g \in G$.
\end{defn}

In particular, if $S$ is a slice at $x$, then $S/G_x$ is homeomorphic to $GS/G$.


The following slice theorem is from \cite{Palais1961}.
\begin{thm}\label{topo_slice}
	Let $X$ be a $G$-space and $x \in X$, where $G$ is a Lie group. Then the following two conditions are equivalent: \\
	(1) $G_{x}$ is compact and there is a slice at ${x}$. \\
	(2) There is a neighborhood $U$ of $x \in X$ such that $\{g \in G | gU \cap U \neq \emptyset \}$ has compact closure in $G$. 
\end{thm}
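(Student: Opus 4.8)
The statement is Palais's topological slice theorem, so the plan is to prove the two implications separately, with $(2)\Rightarrow(1)$ carrying essentially all the weight. For $(1)\Rightarrow(2)$ I would start from a slice $S$ at $x$ with $G_x$ compact, so that $\psi\colon G\times_{G_x}S\to GS$, $[g,s]\mapsto gs$, is a $G$-homeomorphism onto the open set $GS$. The quotient map $q\colon G\times S\to G\times_{G_x}S$ is open, being the quotient by the continuous $G_x$-action $h\cdot(g,s)=(gh^{-1},hs)$, so for any symmetric relatively compact neighborhood $V$ of $e$ in $G$ and any neighborhood $U'$ of $x$ in $S$ the set $U:=\psi(q(V\times U'))$ is an open neighborhood of $x=\psi([e,x])$. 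A short computation with $\psi^{-1}$ shows that $gU\cap U\neq\emptyset$ forces $v_2=gv_1h^{-1}$ for some $v_i\in V$, $h\in G_x$, hence $g=v_2hv_1^{-1}\in\overline{V}\,G_x\,\overline{V}$; thus $\{g:gU\cap U\neq\emptyset\}\subseteq\overline{V}\,G_x\,\overline{V}$, which is compact.

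For the converse $(2)\Rightarrow(1)$ I would first record that $G_x$ is compact: it is closed (preimage of $x$ under the continuous orbit map $g\mapsto gx$), and since $x\in U$ it is contained in the relatively compact set $\{g:gU\cap U\neq\emptyset\}$. Using compactness of $G_x$ and the tube lemma I would shrink $U$ to a $G_x$-invariant open neighborhood of $x$, keeping the return set relatively compact. The heart of the matter is then the construction of the slice: since $G$ is a Lie group and $G_x$ a compact (hence closed) subgroup, $\pi\colon G\to G/G_x$ is a principal bundle over a manifold and admits a continuous local section near the base coset with $\tau(eG_x)=e$. I would use this section, together with the relatively compact return set, to build a $G$-equivariant retraction of a small $G_x$-invariant tube around $x$ onto the orbit $Gx\cong G/G_x$, and take $S$ to be the fibre of this retraction over $x$. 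Then $x\in S$ and $S$ is $G_x$-invariant; $GS$ is open because the $G$-action on $X$ is open; $\psi\colon G\times_{G_x}S\to GS$ is onto by construction, and injective once one checks that $gs_1=s_2$ with $s_1,s_2\in S$ forces $g\in G_x$ — the step that genuinely uses condition $(2)$. Finally $\psi$ is a continuous open bijection whose inverse is continuous by local compactness of $G/G_x$ and properness of the action on the tube, so $S$ is a slice and $(1)$ holds. (The detailed construction is that of \cite{Palais1961}.)

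I expect the main obstacle to be precisely this construction of $S$ and the verification that $\psi$ is a homeomorphism in $(2)\Rightarrow(1)$. Because $X$ is only a topological $G$-space, there is no Riemannian or otherwise geometric transversal to the orbit $Gx$ available to take as the slice; it must be extracted purely from the Lie-group structure of $G/G_x$ and from the properness packaged in the Cartan condition. The delicate point is the continuity of $\psi^{-1}$ — equivalently, that orbits near $Gx$ remain "parallel" to it rather than accumulating onto it — which is exactly what having a relatively compact return set $\{g:gU\cap U\neq\emptyset\}$ rules out.
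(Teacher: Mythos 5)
The paper does not actually prove Theorem~\ref{topo_slice}; it states it as a direct citation of Palais \cite{Palais1961} and never offers its own argument, so there is no internal proof against which to compare your proposal. What you have written is a reasonable summary of Palais's own approach. Your treatment of $(1)\Rightarrow(2)$ is complete and correct: taking $U=\psi(q(V\times U'))$ for $V$ a symmetric, relatively compact identity neighborhood and $U'$ a neighborhood of $x$ in $S$, the equivariance of $\psi$ and the equivalence relation on $G\times_{G_x}S$ force any returning $g$ into $\overline{V}\,G_x\,\overline{V}$, which is compact. For $(2)\Rightarrow(1)$ you correctly isolate compactness of $G_x$ as an immediate consequence of $(2)$, and you correctly identify the real work — manufacturing a $G_x$-invariant transversal $S$ and proving that $\psi$ is a homeomorphism onto $GS$ — but you then defer that construction to \cite{Palais1961}, exactly as the paper itself does. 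So the proposal is not a self-contained proof of the hard implication, but since the paper likewise treats the theorem as an external black box, this is fully consistent with the authors' usage. If you wished to push further, the step genuinely requiring care is the one you flag: in a purely topological $G$-space there is no geometric normal direction, and one must use the manifold structure of $G/G_x$ (via the exponential map of the Lie group $G$, local sections of $G\to G/G_x$, and the Cartan/properness hypothesis packaged in $(2)$) to produce a local cross-section and then prove that $\psi^{-1}$ is continuous; this is Proposition~2.2.2 and its surrounding material in Palais's paper.
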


It is well-known that for an isometric $G$-action on a locally compact metric space $X$, (or more generally, proper actions,) the second statement in theorem \ref{topo_slice} is always fulfilled, thus a slice always exists. We include the proof here for readers' convenience. 

\begin{cor}\label{slice}
	Let $X$ be a locally compact metric space. Let $G$ be a Lie group acting isometrically and effectively on $X$. Then for any $x \in X$, there is a $G_x$-slice at $x$.
\end{cor}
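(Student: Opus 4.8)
The plan is to deduce Corollary \ref{slice} directly from Palais's Theorem \ref{topo_slice}, so the entire task reduces to verifying condition (2) of that theorem: for each $x \in X$, one must produce a neighborhood $U$ of $x$ such that $\{g \in G \mid gU \cap U \neq \emptyset\}$ has compact closure in $G$. Once this is established, Theorem \ref{topo_slice} immediately gives that $G_x$ is compact and that a slice exists at $x$, which is exactly the assertion.

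First I would use local compactness of $X$ to choose $r > 0$ such that the closed ball $\bar{B}_r(x)$ is compact, and set $U = B_r(x)$. Next I would show that if $g \in G$ satisfies $gU \cap U \neq \emptyset$, then $d(x, gx) < 2r$; this is immediate since picking $y \in U$ with $gy \in U$ gives $d(x, gx) \le d(x,y) + d(y, gy) + d(gy, gx) = d(x,y) + d(y,gy) + d(y,x) < 4r$ using that $g$ is an isometry, so in fact the relevant set is contained in $A := \{g \in G \mid d(x, gx) \le 4r\}$. The heart of the argument is then to show $A$ has compact closure in $G$. Since $G$ acts by isometries, $A$ is already closed in $G$ (the map $g \mapsto d(x,gx)$ is continuous), so it suffices to show $A$ is relatively compact, i.e.\ that every sequence $g_k \in A$ has a convergent subsequence in $G$; because $G$ is a Lie group hence locally compact and the action is effective, convergence in $G$ can be detected via convergence of the isometries on $X$.

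The key step is the following compactness argument, which is the classical Arzelà--Ascoli/Myers--Steenrod style reasoning: the points $g_k x$ all lie in the compact set $\bar{B}_{4r}(x)$, so after passing to a subsequence $g_k x \to q$ for some $q \in X$. Using equicontinuity of isometries (they are all $1$-Lipschitz) together with the fact that $X$, being locally compact and a length-type limit here — more carefully, one fixes a countable dense subset and diagonalizes — one extracts a further subsequence along which $g_k$ converges pointwise, hence uniformly on compact sets, to some isometry $g_\infty$ of $X$. Then $g_\infty$ lies in $G$ because $G$ is closed in $\mathrm{Isom}(X)$ (a Lie group acting effectively and properly is a closed subgroup), and $g_\infty \in A$ by continuity. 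This shows $A$ is sequentially compact, hence compact. I would then conclude: $\{g \in G \mid gU \cap U \neq \emptyset\} \subseteq A$ has compact closure, condition (2) of Theorem \ref{topo_slice} holds, and therefore $G_x$ is compact and a $G_x$-slice exists at $x$.

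The main obstacle, and the point that needs a little care, is justifying that the pointwise limit $g_\infty$ of the $g_k$ actually belongs to $G$ rather than merely to $\mathrm{Isom}(X)$ — this is where one must invoke that a Lie group acting effectively by isometries on a locally compact space embeds as a closed (indeed topologically embedded) subgroup of $\mathrm{Isom}(X)$ with the compact-open topology, so that the limit taken in $\mathrm{Isom}(X)$ is also a limit in $G$. A secondary technical point is ensuring the Arzelà--Ascoli extraction goes through when $X$ is only locally compact (not globally compact or proper): one restricts attention to an exhausting sequence of compact balls around $x$, notes that each $g_k$ maps $\bar{B}_R(x)$ into $\bar{B}_{R + 4r}(x)$, and diagonalizes over $R$; this is routine but worth stating cleanly.
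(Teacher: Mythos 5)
Your argument is essentially the paper's: both verify condition (2) of Theorem \ref{topo_slice} by taking $U$ a small ball around $x$, observing via the triangle inequality that $\{g\in G : gU\cap U\neq\emptyset\}$ lies inside $\{g\in G : d(x,gx)\le c\}$ for some constant $c$, and then showing this latter set is precompact by an Arzel\`a--Ascoli diagonal argument over a countable dense subset of $X$. (A minor computational remark: for $U=B_r(x)$ the tight bound is $d(x,gx)<2r$, via $d(x,gx)\le d(x,gy)+d(gy,gx)=d(x,gy)+d(y,x)$ for any $y$ with $y,gy\in U$; your displayed chain yields $4r$, which is looser but still serviceable. The paper takes $U=B_{1/2}(x)$ and the bound $1$, which is the same idea up to rescaling.)

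The one place your proposal goes astray is the claim, invoked to conclude $g_\infty\in G$, that ``a Lie group acting effectively by isometries on a locally compact space embeds as a closed subgroup of $\mathrm{Isom}(X)$.'' That statement is false in general: the irrational line flow $\mathbb{R}\curvearrowright T^2$ is an effective isometric action of a Lie group on a compact space, $\mathbb{R}$ is not closed in $\mathrm{Isom}(T^2)$, and indeed no slice exists at any point in that example, since every non-empty open $G$-invariant subset is all of $T^2$, which cannot be $G$-homeomorphic to $\mathbb{R}\times S$. You are right to isolate the membership $g_\infty\in G$ as the real content of the argument --- the paper's own proof in fact glosses over it, asserting ``$g\in G(1)$'' without checking that $g\in G$ --- but the blanket closedness statement you invoke does not repair the gap. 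The correct reading is that the corollary implicitly assumes $G$ is a closed subgroup of $\mathrm{Isom}(X)$ (equivalently, that the action is proper, which is not a consequence of effectiveness alone); under that hypothesis the limit taken in $\mathrm{Isom}(X)$ lands in $G$ and the argument closes. This hypothesis holds automatically wherever Corollary \ref{slice} is applied in the paper, because there $G$ arises as an equivariant Gromov--Hausdorff limit of group actions and is therefore a closed subgroup of $\mathrm{Isom}(X)$.
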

\begin{proof}
	We claim that $G(1)=\{ g \in G | \ d(gx,x) \le 1 \}$ is compact. If this claim holds, then we set $U=B_{1/2}(x)$. With this $U$, $\{ g\in G | gU \cap U\not= \emptyset \}$ is contained in $G(1)$, thus it has a compact closure in $G$. Then the result follows from Theorem \ref{topo_slice}.
	
	We prove the claim. Let $\{g_j\}$ be any sequence in $G(1)$. We shall find a converging subsequence. Let $\{ x_i \}$ be a sequence that is dense in $X$.  For $x_1$, we have 
	$$	d(x,g_j x_1) \le d(x, g_j x) + d(g_j x, g_j x_1) \le 1 + d(x, x_1).$$   
	So we may assume $g_j x_1$ converges to some $y_1$ by passing to a subsequence. For this subsequence, we repeat this process for $x_2$,...,$x_i$, and by a standard diagonal argument, we may assume that for all $i$, $g_j x_i$ converges to $y_i$ as $j \to \infty$. We define $g:X\to X$ by setting $g(x_i)=y_i$ and extending $g$ to the whole space continuously. By construction, $g$ is an isometry of $X$ and  $g_j$ converges to $g$. Moreover, $g\in G(1)$ because $d(x,gx)=\lim d(x,g_jx)\le 1.$
\end{proof}

\subsection{Loop Liftings}
In this subsection, we explain how we apply Theorem \ref{PW} and the slice theorem to prove semi-local simple connectedness. We consider an easier case that the diameter of $M_i$ is strictly less than $4$. In this case, the local universal cover $\widetilde{B_4(p_i)}$ is the same as universal cover $\widetilde{M}_i$ of $M_i$.  For consistency, We still use the notation $B_4(p_i)$ instead of $M_i$. Let $G_i$ be the fundamental group of $B_4(p_i)$. Passing to a subsequence if necessary, we have
\begin{equation} \label{e1}
	\begin{CD}
		(\widetilde{B_4(p_i)},\tilde{p}_i,G_i) @>GH>> (\widetilde{X},\tilde{p},G)\\
		@VV\pi V @VV\pi V\\
		(B_4(p_i),p_i) @>GH>> (X= \widetilde{X}/G,p),
	\end{CD}
\end{equation}
Since $x \in B_{1/800}(p)$, there exists $x_{i} \in M_i$ converging to $x$. We change the base points and consider
\begin{equation} \label{e2}
	\begin{CD}
		(\widetilde{B_4(p_i)},\tilde{x}_i,G_i) @>GH>> (\widetilde{X},\tilde{x},G)\\
		@VV\pi V @VV\pi V\\
		(B_4(p_i),x_i) @>GH>> (X= \widetilde{X}/G,x).
	\end{CD}
\end{equation}

To prove $\lim \limits_{t \to 0} \rho(t,x) = 0$, we lift a loop based at $x$ to a loop based at $\tilde{x}$; see diagram \ref{e2}. This idea is also used in \cite{PanWei2019}, where the case $G\tilde{x}=\tilde{x}$ is considered. Note that the loop is only continuous and may not be rectifiable. For a general non-compact Lie group $G$-action, we don't know whether a path-lifting exists. Even though such path-lifting exists, a loop may be lifted to a non-closed path since in general $G\tilde{x}\not= \tilde{x}$. We make use of the slice theorem to lift any loop based at $x$ to a loop based at $\tilde{x}$. Furthermore, a loop in a small ball centered at $x$  must be lifted in a small ball centered at $\tilde{x}$ since $\pi : S/G_{\tilde{x}} \to GS/G$ is a homeomorphism.

Now we follow this idea to prove Theorem \ref{semi} when $\mathrm{diam}(M_i)<4$ for all $i$:
\begin{proof}
	Consider $(\tilde{X},\tilde{x},G)$ as in the diagram \ref{e2}. Given any $\epsilon>0$, since $\lim_{t \to 0} \rho (t,\tilde{x}) = 0$, there exists $\delta'>0$ such that $\rho (t, \tilde{x}) < \epsilon$ holds for all $t \leq \delta'$. $G$ is a Lie group due to \cite{CheegerColding2000a}. By Corollary \ref{slice}, there is a slice $S$ at $\tilde{x}$.
	
	Since $GS$ is open, $GS/G$ contains a open ball $B_{\delta}(x)$. We show that 
	$$\pi^{-1} (B_{\delta}(x)) \cap S \subset B_{\delta'}(\tilde{x})$$ 
	when $\delta$ is small enough. Otherwise, there exist $\delta_j \to 0$ and $\tilde{x}_j \in \pi^{-1} (B_{\delta_j}(x)) \cap S$ but $\tilde{x}_j \notin B_{\delta'} (\tilde{x})$. While $\pi (\tilde{x}_{j}) \in B_{\delta_j}(x)$ converges to $x=\pi (\tilde{x})$, $\tilde{x}_j$ does not converge to $\tilde{x}$. This contradicts to that $\pi : S/G_{\tilde{x}} \to GS/G$ is a homeomorphism. 
	
	Now we have $\delta>0$ small such that $\pi^{-1} (B_{\delta}(x)) \cap S \subset B_{\delta'}(\tilde{x})$. Choose any loop $\gamma$ in $B_{\delta}(x)$. Since slice theorem applies, we can lift $\gamma$ to loop $\tilde{\gamma}$ based at $\tilde{x}$ and contained in $S$. Actually the image of $\tilde{\gamma}$ is also contained in $B_{\delta'}(\tilde{x})$ since $\pi^{-1} (B_{\delta}(x)) \cap S \subset B_{\delta'}(\tilde{x})$.  
	
	Because $\rho (\tilde{x}, \delta') < \epsilon$, there exists a homotopy $H$, of which the image is contained in $B_{\epsilon}(\tilde{x})$, from $\tilde{\gamma}$ to the constant loop at $\tilde{x}$. Then $\pi \circ H$  is a homotopy from  $\gamma$ to the constant loop at $x$ and the image of this homotopy is in $B_{\epsilon}(x)$. So any loop in $B_{\delta}(x)$ is contractible in $B_{\epsilon}(x)$, i.e. $\rho(t,x) \leq \epsilon$ when $t<\delta$. Since $\epsilon$ can be arbitrarily small, $\lim_{t \to 0} \rho(t,x) = 0$.
\end{proof} 

\section{Pseudo-group}

We no longer assume that $M_i$ has diameter less than $4$. $\widetilde{B_4(p_i)}$ may not have a Gromov-Hausdorff limit in general (see \cite{SormaniWei2004}); instead, we will use $\bar{B}_1(\tilde{p}_i)$ in $\widetilde{B_4(p_i)}$. Choose $\tilde{p_i} \in \widetilde{B_4(p_i)}$ with $\pi(\tilde{p}_i)=p_i$. We consider the closed ball $\bar{B}_{1}(\tilde{p}_i)$ with intrinsic metric in $\widetilde{B_4(p_i)}$ and $$G_{i}= \{ g \in \Gamma_i | d(gp_i,p_i) \le 1/100 \}.$$ Passing to a subsequence, 
$(\bar{B}_{1}(\tilde{p}_i),\tilde{p}_i)$ converges to $(\bar{B}_{1}(\tilde{p}),\tilde{p})$ by Bishop-Gromov relative volume comparison. We will define the meaning of convergence $G_i \to G$, where the limit $G$ is a pseudo-group, which means $gg' \notin G$ for some $g,g' \in G$. By the argument of last subsection, to prove Theorem \ref{semi}, it is enough to prove the existence of a slice for pseudo-group actions.

Now we start to construct the pseudo-group $G$. Let $\Gamma_i$ be the fundamental group of $B_4(p_i)$. 
Define 
\begin{equation*}
	G_{i}'=\{ g \in \Gamma_i | d(g\tilde{p}_i,\tilde{p}_i) \le 1/30 \},\ \ \ G_{i}= \{ g \in \Gamma_i | d(g\tilde{p}_i,\tilde{p}_i) \le 1/100 \} \subset G_i'.
\end{equation*} 

Notice that elements in $G_i'$ or $G_i$ preserve the distance on $\widetilde{B_4(p_i)}$ but may not preserve intrinsic metric on $\bar{B}_{1}(\tilde{p}_i)$. However, the intrinsic metric $d$ and restricted metric (from $\widetilde{B_4(p_i)}$) on $\bar{B}_{1}(\tilde{p}_i)$ coincide on $\bar{B}_{1/3}(\tilde{p}_i)$. So for any $z_1,z_2 \in \bar{B}_{1/10}(\tilde{p}_i)$ and $g \in G_i'$, we have $gz_1,gz_2 \in  \bar{B}_{2/15}(\tilde{p}_i) \subset \bar{B}_{1/3}(\tilde{p}_i)$ and thus $d(z_1,z_2)=d(gz_1,gz_2)$. Abusing the notation, we use notation $d$ for intrinsic metrics on $\bar{B}_{1}(\tilde{p}_i)$ and $\bar{B}_{1}(\tilde{p})$. For $r \le 1$, we restrict metric $d$ from $\bar{B}_{1}(\tilde{p})$ (or $\bar{B}_{1}(\tilde{p}_i)$) to $\bar{B}_{r}(\tilde{p})$ (or $\bar{B}_{r}(\tilde{p}_i)$).  

We see $G_i'$ and $G_i$ as sets of distance-preserving maps from $\bar{B}_{1/10}(\tilde{p}_i)$ to $\bar{B}_{1/3}(\tilde{p}_i)$. After passing to a subsequence, $G_i'$ converges to $G'$ and $G_i$ converges to $G$, where $G'$ and $G$ are closed subsets of distance-preserving maps from $\bar{B}_{1/10}(\tilde{p})$ to $\bar{B}_{1/3}(\tilde{p})$. Though Theorem \ref{pseudo slice} is stated with $\bar{B}_{1/800}(\tilde{p})$ and $G$, we need $\bar{B}_{1/10}(\tilde{p})$ and $G'$ to provide extra room to study $\bar{B}_{1/800}(\tilde{p})$ and $G$.

We consider $G'$ at first. For all $g \in G'$, $d(g\tilde{p}, \tilde{p}) \le 1/30$ holds. Notice that any $g \in G'$ is only assumed to be defined on $\bar{B}_{1/10}(\tilde{p})$. 
$G'$ has compact-open topology as a set of continuous maps $C(\bar{B}_{1/10}(\tilde{p}),\bar{B}_{1/3}(\tilde{p}))$. $G'$ is compact by the same proof of Corollary \ref{slice}. However, there is no group structure so far since $g_1g_2$ may not be defined on $\bar{B}_{1/10}(\tilde{p})$ for $g_1, g_2 \in G'$.

Before endowing $G'$ a pseudo-group structure, we prove a lemma as below for later use.

\begin{lem} \label{lemma_nofix}
	For any $g \in G'$, if $g$ fix $B_r(\tilde{p})$ for some $0 < r < 1/10$, $g$ must be the identity map on $\bar{B}_{1/10}(\tilde{p})$.
\end{lem}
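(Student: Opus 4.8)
The plan is to argue by contradiction, transplanting the scheme behind Theorem~\ref{isom_lie} and Proposition~\ref{PR_nofoxball} into the \emph{interior} of $\bar B_{1/10}(\tilde{p})$. First note that the open ball of radius, say, $1/2$ around $\tilde{p}$ is itself a Ricci limit space, being an interior ball of the Gromov--Hausdorff limit of the incomplete manifolds $\widetilde{B_4(p_i)}$; hence the Colding--Naber structure theory applies there, so there is an essential dimension $k$, the regular set $\mathcal{R}_k$ is dense, and the sets $(\mathcal{R}_k)_{\epsilon,\delta}$ enjoy the path-connectedness and stability properties used in the proof of Theorem~\ref{isom_lie}. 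Now suppose $g\in G'$ fixes $B_r(\tilde{p})$ but is not the identity on $\bar B_{1/10}(\tilde{p})$. Let $\rho_0=\sup\{\rho\le 1/10 \mid g \text{ fixes } \bar B_\rho(\tilde{p})\}$; by continuity $g$ fixes $\bar B_{\rho_0}(\tilde{p})$, and $r\le \rho_0<1/10$ (if $\rho_0=1/10$ we are done, since $\bar B_{1/10}(\tilde{p})$ is the closure of the open ball and $g$ is continuous). Because $\bar B_{\rho_0+\eta}(\tilde{p})\not\subset\{gz=z\}$ for every $\eta>0$ and $\{gz\ne z\}$ is open, the density of $\mathcal{R}_k$ lets us pick regular points $z_j$ with $\rho_0<d(z_j,\tilde{p})<\rho_0+1/j$ and $gz_j\ne z_j$; passing to a subsequence, $z_j\to q$ with $d(q,\tilde{p})=\rho_0$, so $gq=q$ and $d(gz_j,z_j)\le 2d(z_j,q)\to 0$.

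The heart of the proof is a blow-up at the frontier point $q$. Here one should replace $g$ by the group it generates: for large $i$ the elements $g_i\in\Gamma_i$ with $g_i\to g$ are genuine deck transformations of $\widetilde{B_4(p_i)}$, so $H_i:=\langle g_i\rangle$ acts isometrically on $\widetilde{B_4(p_i)}$, and after a further subsequence $(\bar B_1(\tilde{p}_i),\tilde{p}_i,H_i)$ converges to a limit pseudo-group $(\bar B_1(\tilde{p}),\tilde{p},H)$ containing $g$. Following the argument behind Theorem~\ref{isom_lie}, one chooses a regular base point $q_j$ on the $\rho_0$-sphere close to $q$ and a scale $s_j\to 0$ at which the rescaled group's displacement on the unit ball around $q_j$ is comparable to $1/20$; the path-connectedness of $(\mathcal{R}_k)_{\epsilon,\delta}$ is exactly what prevents $D_{H_i,\cdot}(q_j)$ from being negligible at all admissible scales, by propagating the nonzero displacement at $z_j$ back towards $q_j$. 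Rescaling $\bar B_1(\tilde{p})$ by $1/s_j$ and basing at $q_j$, the pointed spaces converge to $(\mathbb{R}^k,0)$ since $q$ is regular, the rescaled fixed ball $\bar B_{\rho_0}(\tilde{p})$ converges to a closed half-space $P^+\subset\mathbb{R}^k$ (its boundary sphere flattening to a hyperplane through the base point), and $H_i$ converges to a subgroup $H_\infty\le\mathrm{Isom}(\mathbb{R}^k)$ which is nontrivial by the scale choice, indeed with $D_{H_\infty,1}(0)$ close to $1/20$. But $H_\infty$ fixes the full-dimensional set $P^+$, hence fixes an open set, hence is trivial --- equivalently, this contradicts the standard fact that no nontrivial subgroup of $\mathrm{Isom}(\mathbb{R}^k)$ has $D_{H,1}(0)\le 1/20$. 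Therefore $g$ is the identity on $\bar B_{1/10}(\tilde{p})$.

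The main obstacle is the coordinated choice of the pair $(q_j,s_j)$ so that three requirements hold simultaneously: (i) the base space blows up to $\mathbb{R}^k$, which forces $q_j\in(\mathcal{R}_k)_{\epsilon_j,\delta_j}$ with $\epsilon_j\to 0$ and $s_j<\delta_j$; (ii) the fixed ball survives in the limit as a half-space, which pins $q_j$ to the $\rho_0$-sphere and requires $s_j/\rho_0\to 0$; and (iii) the limit group remains nontrivial, which, because a single $g$ may have displacement much smaller than $s_j$ at every relevant point, genuinely requires passing to $\langle g_i\rangle$ and using the propagation of displacement along paths in the regular set. A secondary point needing care is the incompleteness: $g$ is only a distance-preserving map $\bar B_{1/10}(\tilde{p})\to\bar B_{1/3}(\tilde{p})$, and its powers need not be globally defined, but near the fixed frontier point a whole neighborhood and all the powers $g^m$ that enter the argument are well defined, which is why keeping the extra room $\bar B_{1/3}(\tilde{p})$ (and, upstairs, working on all of $\widetilde{B_4(p_i)}$) is enough.
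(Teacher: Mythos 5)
Your blow-up skeleton (find a point and scale at which the displacement of the group generated by $g$ is calibrated to $1/20$, use Colding--Naber regularity to make the blow-up Euclidean, and invoke the ``no small subgroups of $\mathrm{Isom}(\mathbb{R}^k)$'' fact) is the right overall idea and agrees in spirit with the paper. But the specific way you set up the blow-up does not work. You anchor the base point $q_j$ to the metric sphere $\partial B_{\rho_0}(\tilde p)$ and then assert that the fixed ball $\bar B_{\rho_0}(\tilde p)$ blows up at $q_j$ to a closed half-space $P^+$. Neither step is justified: $\mathcal R_k$ is dense and full measure, but that does not make it meet a prescribed metric sphere; and metric spheres in a Ricci limit space are merely level sets of a distance function with no regularity, so there is no reason the tangent cone of the ball at such a point is a half-space. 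Beyond that, the ``coordinated choice of $(q_j,s_j)$'' you flag as the main obstacle is a genuine one that your sketch does not resolve: at a point $q_j$ on the frontier we have $\rho_C(q_j)=0$, so $D_{C,r}(q_j)$ can be much smaller than $r/20$ at all $r$ below the scale at which $q_j$ is known to be $\epsilon$-regular, and ``propagating displacement along paths'' is only an intuition here, not an argument. The half-space picture is in fact redundant even if it held, since the $D_{H,1}(0)\le 1/20$ fact already closes the argument --- but you still owe the calibrated $(q_j,s_j)$.

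The paper avoids all of this by \emph{not} constraining the blow-up point to the frontier of the fixed set. Writing $C$ for the closure in $G'$ of the pseudo-group generated by $g$ (which is a genuine group because $C$ fixes $B_r(\tilde p)$), it picks a regular point $w$ not fixed by $g$ and a regular point $z$ inside the fixed ball, then uses the Colding--Naber geodesic-connectedness of the $\epsilon$-regular set to join them by a minimal geodesic $c$ lying in $(\mathcal R_k)_{\epsilon,\delta}$, staying away from the boundary of $\bar B_{1/10}(\tilde p)$. After handling the easy endpoint cases by an intermediate value argument in the radius $r$, the remaining case uses continuity of $x\mapsto D_{C,\lambda}(x)$ along $c$ (with $\lambda<\min(\theta,\eta,\delta)$) to find $z(\epsilon)\in(\mathcal R_k)_{\epsilon,\delta}$ with $D_{C,\lambda}(z(\epsilon))=\lambda/20$. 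That is exactly the calibrated datum you need: the rescaled balls $(B_{\lambda}(z(\epsilon)),\lambda^{-1}d,C)$ converge to $(B_1^k(0),d_0,H)$ with $H$ nontrivial and $D_{H,1}(0)=1/20$, which is impossible. Replace your frontier/half-space construction by this IVT-along-a-regular-geodesic step; the rest of your outline then goes through.
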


As mentioned in Section 2, this lemma is a local version of Lemma 2.1 in \cite{PanRong2018}. The main difference is that we are considering the limit space coming from a sequence of incomplete manifolds, so our construction should keep away from the boundary. Also see \cite{CheegerColding2000a} and \cite{ColdingNaber2012}.

For any $\tilde{x} \in B_{1/10}(\tilde{p})$ and $r$ such that $B_r(\tilde{x}) \subset B_{1/10}(\tilde{p})$, we write $$D_{H,r}(x)= \sup_{w \in B_{r}(x)} \rho_H(w)$$ as the displacement function of some subgroup $H$ on $B_r(x)$.

\begin{proof}[Proof of Lemma \ref{lemma_nofix}]
	Let $k$ be the dimension of $B_1(\tilde{p})$ in the sense of \cite{ColdingNaber2012}. We assume $r$ is the largest $t$ such that $g$ fixes $B_t(\tilde{p})$.
	Let $C$ be the closure of subset generated by $g$ in $G'$. $C$ fixes $B_r(\tilde{p})$ as well and thus $C$ is a group. Let $s= (1/10 - r)/4$. We shall show for all $\epsilon>0$, there exist $x(\epsilon)\in (\mathcal{R}_{k})_{\epsilon, \tau (\epsilon)} \cap B_{r + 3s}$ and $s > \tau (\epsilon)>r(\epsilon)>0$, such that 
	\begin{equation}\label{displacement_1/20}
		D_{C, r(\epsilon)}(x(\epsilon))= r(\epsilon)/20
	\end{equation}
	If so, then we can find a sequence $\epsilon_l \to 0$, where $l\in\mathbb{N}$, such that 
	\begin{equation}
		(B_{r(\epsilon_l)}(x(\epsilon_l)), (r(\epsilon_l))^{-1}d, C) \to (B^k_1(0), d_0, H).
	\end{equation} 
	$H$ is non-trivial with $D_{H,1}(0) = 1/20$, which is impossible.
	
	Fixing $\epsilon$, we verify (\ref{displacement_1/20}). Since $r$ is the largest $t$ such that $g$ fix $B_t(\tilde{p})$, we can find $w \in  B_{s+r} (\tilde{p}) \cap \mathcal{R}_k$ such that $g(w) \neq w$. We claim any geodesic from $w$ to a point in $B_{s}(\tilde{p})$ is contained in $B_{1/10-s}(\tilde{p})$. Otherwise there exists a geodesic from $w$ to $x \in B_{s}(\tilde{p})$ and there is point $y$ on this geodesic with $d(\tilde{p},y)=1/10 - s$. Then $d(x,w)=d(x,y)+d(y,w) > 1/10 -2s + 2s = 1/10$ and $d(x,w) \le d(w, \tilde{p}) + d(\tilde{p},x) \le 1/10-3s+s=1/10-2s$, which is a contradiction.
	
	Choose $\theta < \min \{ d(w, gw), s \}$ such that $w \in (\mathcal{R}_k)_{\epsilon, 2\theta}$. Then we have $D_{C, \theta}(w) \ge \theta/20$. Let $z \in (\mathcal{R}_k)_{\epsilon, 2\eta}$ be a regular point in $B_{s}(\tilde{p}) \cap B_{r}(\tilde{p})$ for small $\eta$. $D_{C, \eta}(z)= 0 \le \eta/20$ since $C$ fix $B_{r}(\tilde{p})$. Let $\lambda <\min( \theta, \eta)$. 
	
	Notice that $D_{C,r}(z)$ is continuous in $r$ and $z$ when $B_r(z)$ is away from the boundary of $\bar{B}_{1/10}(p)$. Suppose that $D_{C,\lambda}(z) > \lambda/20$, by intermediate value theorem, we can find $\lambda \le r \le \eta$ such that $D_{C,r}(z) = r/20$.
	
	Similarly, if $D_{C, \lambda}(w) < \lambda/20$, we can find $\lambda \le r \le \theta$,$D_{C,r}(w) = r/20$.
	
	Now we assume $D_{C, \lambda}(z) \le \lambda/20 \le D_{C,\lambda}(w)$. By \cite{CheegerColding2000a} for $k=n$ and \cite{ColdingNaber2012} for general $k$, we may assume there exists $\delta$ and a geodesic $c: [0,l] \to B_{1/10-s}(\tilde{p})$ from $w$ to $z$ such that $c$ is contained in $(\mathcal{R}_k)_{\epsilon, \delta}$. Notice that we just showed that a geodesic from $w$ to $z$ must be contained in $B_{1/10-s}(\tilde{p})$, so $c \subset B_{1/10-s}(\tilde{p})$. By intermediate value theorem we can find $z(\epsilon) \in (\mathcal{R}_k)_{\epsilon , \delta}$ on $c$ such that $D_{C,\lambda}(z(\epsilon)) = \lambda/20$. 
\end{proof}

\begin{cor}\label{Cor1}
	For any $\delta>0$ and $r<1/10$, there exists $\epsilon$ so that the following holds: if $g_i,g_i' \in G_i'$ such that $d(g_i \tilde{x},g_i' \tilde{x})<\epsilon$ for all $\tilde{x} \in \bar{B}_{r}(\tilde{p}_i)$, we have $d(g_iw,g_i'w)<\delta$ for all $w \in \bar{B}_{1/10}(\tilde{p}_i)$. 
\end{cor}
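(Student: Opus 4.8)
The plan is to argue by contradiction, reduce the pair $g_i,g_i'$ to a single near-identity deck transformation, and then apply Lemma \ref{lemma_nofix} to its limit. Suppose the corollary fails for some $\delta_0>0$ and $r_0<1/10$. Then for every $m\in\mathbb{N}$ there are an index $i_m$ and elements $g_m,g_m'\in G_{i_m}'$ with $d(g_m\tilde{x},g_m'\tilde{x})<1/m$ for all $\tilde{x}\in\bar{B}_{r_0}(\tilde{p}_{i_m})$, yet $d(g_mw_m,g_m'w_m)\ge\delta_0$ for some $w_m\in\bar{B}_{1/10}(\tilde{p}_{i_m})$. Passing to a subsequence I may assume $i_m\to\infty$; the case of bounded $i_m$ is elementary, since an isometry of a connected Riemannian manifold fixing a nonempty open subset is the identity.

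Next I would work with the single deck transformation $h_m=(g_m')^{-1}g_m\in\Gamma_{i_m}$, a genuine isometry of all of $\widetilde{B_4(p_{i_m})}$. Applying the isometry $g_m'$ to the two displayed inequalities turns them into $d(h_m\tilde{x},\tilde{x})<1/m$ for all $\tilde{x}\in\bar{B}_{r_0}(\tilde{p}_{i_m})$ and $d(h_mw_m,w_m)\ge\delta_0$. From $d(g_m\tilde{p}_{i_m},\tilde{p}_{i_m})\le1/30$ and $d(g_m'\tilde{p}_{i_m},\tilde{p}_{i_m})\le1/30$ one checks that $h_m$ maps $\bar{B}_{1/10}(\tilde{p}_{i_m})$ into $\bar{B}_{1/6}(\tilde{p}_{i_m})\subset\bar{B}_{1/3}(\tilde{p}_{i_m})$, where the restricted and intrinsic metrics coincide, and taking $\tilde{x}=\tilde{p}_{i_m}$ above gives $d(h_m\tilde{p}_{i_m},\tilde{p}_{i_m})<1/m\le1/30$ once $m\ge30$; hence $h_m$ is a bona fide element of $G_{i_m}'$ in the sense of Section 3.

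Applying the Arzel\`a--Ascoli argument from the proof of Corollary \ref{slice} to the convergence $(\bar{B}_1(\tilde{p}_{i_m}),\tilde{p}_{i_m})\to(\bar{B}_1(\tilde{p}),\tilde{p})$, I would pass to a further subsequence so that $h_m\to h\in G'$ and $w_m\to w\in\bar{B}_{1/10}(\tilde{p})$, with $h_mw_m\to hw$. Letting $m\to\infty$ in $d(h_m\tilde{x},\tilde{x})<1/m$ shows that $h$ fixes $B_{r_0}(\tilde{p})$, and since $r_0<1/10$, Lemma \ref{lemma_nofix} forces $h$ to be the identity on $\bar{B}_{1/10}(\tilde{p})$; in particular $hw=w$. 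But $d(hw,w)=\lim_m d(h_mw_m,w_m)\ge\delta_0>0$, a contradiction, and the corollary follows.

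I expect the main obstacle to be the middle step: verifying that $h_m=(g_m')^{-1}g_m$ is genuinely a legitimate element of $G_{i_m}'$, with its image kept inside $\bar{B}_{1/3}(\tilde{p}_{i_m})$ so that all distance computations are unambiguous and the equivariant limit $h$ really lies in $G'$ and fixes a ball $B_{r_0}(\tilde{p})$ with $0<r_0<1/10$. Once this bookkeeping is in place, Lemma \ref{lemma_nofix} applies directly and the remaining parts are routine use of the triangle inequality and compactness.
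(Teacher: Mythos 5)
Your proof is correct and follows essentially the same contradiction argument as the paper: reduce to a single deck transformation whose limit fixes $\bar{B}_{r}(\tilde{p})$ but moves some $w$ by at least $\delta_0$, then invoke Lemma~\ref{lemma_nofix}. Your choice $h_m=(g_m')^{-1}g_m$ is a minor (and arguably cleaner) variant of the paper's $g_i'g_i^{-1}$, since conjugating by $g_m'$ directly turns the hypothesis into $d(h_m\tilde{x},\tilde{x})<1/m$ on $\bar{B}_{r_0}(\tilde{p}_{i_m})$, from which $h_m\in G_{i_m}'$ and the limit's fixing of $\bar{B}_{r_0}(\tilde{p})$ are immediate.
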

\begin{proof}
	Assume there exist $\delta>0$, $r<1/10$, $g_i,g_i' \in G_i'$ such that $d(g_i \tilde{x},g_i' \tilde{x})<\epsilon_i \to 0$ for all $\tilde{x} \in \bar{B}_{r}(\tilde{p}_i)$, but $d(g_iw_i,g_i'w_i) \ge \delta$ for some $w_i \in \bar{B}_{1/10}(\tilde{p}_i)$. For large $i$, $g_i'g_i^{-1} \in G_i'$  since $d(g_i'g_i^{-1} \tilde{p}, \tilde{p}) < \epsilon_i$. By passing to a subsequence, $g_i'g_i^{-1} \to g$ and $w_i \to w \in \bar{B}_{1/10}(\tilde{p})$. $g$ fix $\bar{B}_{r}(\tilde{p})$ but $d(gw,w) \ge \delta$. This contradicts to the lemma above.  
\end{proof}

Now we give $G'$ a pseudo-group structure as follows. For any $g_0,g_1,g_2 \in G'$, notice that $g_2(\bar{B}_{1/250}(\tilde{p})) \subset \bar{B}_{1/10}(\tilde{p})$; so $g_1g_2(\bar{B}_{1/250}(\tilde{p}))$ is always well-defined. We say $g_0=g_1g_2$ if $g_0(z)=g_1g_2(z)$ for all $z \in \bar{B}_{1/250}(\tilde{p})$. By Lemma \ref{lemma_nofix}, such $g_0$ is unique if it exists. This gives a pseudo-group structure on $G'$. The identity map defined on $\bar{B}_{1/10}(\tilde{p})$, which we write as $1$, is the identity element. 
\begin{rem}
	We remind readers that we view elements in $G'$ as continuous maps from $\bar{B}_{1/10}(\tilde{p})$ to $\bar{B}_{1/3}(\tilde{p})$. We check the product of the pseudo-group structure on $G'$ by checking their values on $\bar{B}_{1/250}(\tilde{p})$.
\end{rem}

We claim that for any $g \in G'$, it has an inverse $g^{-1} \in G'$. We can choose $g_i \in G_i'$ such that $g_i$ converges to $g$. Since $d(g_i^{-1}\tilde{p}_i,\tilde{p}_i)=d(\tilde{p}_i,g_i\tilde{p}_i) \le 1/30$, $g^{-1}_i \in G_i'$. After passing to a subsequence, we assume $g^{-1}_i \to g' \in G'$. For any $z \in \bar{B}_{1/250}(\tilde{p})$, we can find $z_i$ converging to $z$ where $z_i \in B_{1/240}(\tilde{p}_i)$. Then $gg'(z)$ is well-defined since $g'(z) \in \bar{B}_{1/10}(\tilde{p})$. $z_i = g_ig_i^{-1} z_i \to gg'z$, so $gg'(z)=z$ for all $z \in \bar{B}_{1/250}(\tilde{p})$. Thus $g'=g^{-1}$ in $G'$. 

A similar proof shows that for any $g_1,g_2 \in G'$ such that $d(g_1g_2(\tilde{p}),\tilde{p})<1/30$, we have $g_1g_2 \in G'$. Notice that $g_1g_2(\tilde{p})$ means $g_1 (g_2(\tilde{p}))$, which is always defined even though $g_1g_2$ is not in $G'$. 

For $G$, we can define a pseudo-group structure as above. Recall that any $g\in G$ satisfies $d(g\tilde{p}, \tilde{p}) \le 1/100$, thus $G$ is a subset of $G'$. It is clear that the followings hold:\\
(1) if $g \in G$, we have $g^{-1} \in G$; \\
(2) $g_1g_2g_3 \in G'$ for all $g_1,g_2,g_3 \in G$, since $d(g_1g_2g_3 \tilde{p},\tilde{p})<1/30$. 

We give a simple but useful lemma below:
\begin{lem}\label{lemma4}
	Let $g_1,g_2 \in G$. If there exists $x \in \bar{B}_{1/240}(\tilde{p})$ such that $g_1g_2(x) \in \bar{B}_{1/240}(\tilde{p})$, then $g_1g_2 \in G$.
\end{lem}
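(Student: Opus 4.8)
The plan is to reduce the statement to a \emph{strict} displacement bound, $d(g_1g_2\tilde p,\tilde p)<1/100$, and then to conclude $g_1g_2\in G$ by the same limiting argument that was used above to show $g_1g_2\in G'$ whenever $d(g_1g_2\tilde p,\tilde p)<1/30$.

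First I would check that the composition is meaningful near $\tilde p$. Recall $g_1,g_2$ are distance-preserving maps $\bar B_{1/10}(\tilde p)\to\bar B_{1/3}(\tilde p)$ with $d(g_j\tilde p,\tilde p)\le 1/100$. Since $x\in\bar B_{1/240}(\tilde p)$ and $g_2$ preserves $d$, we have $d(g_2x,g_2\tilde p)=d(x,\tilde p)\le 1/240$, hence $d(g_2x,\tilde p)\le 1/240+1/100<1/10$; thus $g_2x,g_2\tilde p\in\bar B_{1/10}(\tilde p)$, the composition $g_1g_2(x)=g_1(g_2(x))$ is defined, and since $g_1$ is distance-preserving we get $d(g_1g_2x,g_1g_2\tilde p)=d(g_2x,g_2\tilde p)=d(x,\tilde p)$. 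Then, using the hypothesis $g_1g_2(x)\in\bar B_{1/240}(\tilde p)$,
\[
d(g_1g_2\tilde p,\tilde p)\ \le\ d(g_1g_2\tilde p,g_1g_2x)+d(g_1g_2x,\tilde p)\ \le\ d(x,\tilde p)+\tfrac{1}{240}\ \le\ \tfrac{1}{120}\ <\ \tfrac{1}{100}.
\]

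Finally I would run the limiting argument: choose $g_{1,i},g_{2,i}\in G_i$ with $g_{j,i}\to g_j$; then $g_{1,i}g_{2,i}\tilde p_i\to g_1g_2\tilde p$, so $d(g_{1,i}g_{2,i}\tilde p_i,\tilde p_i)\to d(g_1g_2\tilde p,\tilde p)<1/100$ and hence $g_{1,i}g_{2,i}\in G_i$ for all large $i$. Passing to a subsequence, $g_{1,i}g_{2,i}$ converges to some $h\in G$, and comparing values on $\bar B_{1/250}(\tilde p)$ (where the pseudo-group product is read off) gives $h=g_1g_2$, the uniqueness coming from Lemma \ref{lemma_nofix}; thus $g_1g_2\in G$. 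The proof is short; the only points requiring care are that $g_2$ sends $\bar B_{1/240}(\tilde p)$ into $\bar B_{1/10}(\tilde p)$, so that the composition is legitimate, and that the displacement bound is \emph{strict}, which is precisely what the limiting step needs.
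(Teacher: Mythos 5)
Your proof is correct and follows essentially the same route as the paper: establish the strict displacement bound $d(g_1g_2\tilde p,\tilde p)\le 1/120 < 1/100$ by the triangle inequality and the distance-preserving property, then conclude $g_1g_2\in G$ by the limiting/approximation argument. You spell out the limiting step in more detail than the paper's terse version (which just invokes $g_1g_2\in G'$ together with the displacement bound), and you correctly flag that the \emph{strict} inequality is what makes the passage to the limit work; both are accurate and match the paper's intent.
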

\begin{proof}
	Since $g_1,g_2 \in G$, $g_1g_2 \in G'$. Because $x \in \bar{B}_{1/240}(\tilde{p})$ and $g_1(g_2(x)) \in \bar{B}_{1/240}(\tilde{p})$, $d(g_1g_2\tilde{p},\tilde{p})<1/100$, we have $g_1g_2 \in G$ as well.
\end{proof}

We define $\bar{B}_{1/250}(\tilde{p})/G$ as the quotient set, where the equivalent relation is given by $z \sim gz$ for any $z \in \bar{B}_{1/250}(\tilde{p})$ and $g \in G$ such that $gz \in \bar{B}_{1/250}(\tilde{p})$.  We define a natural quotient metric on $\bar{B}_{1/250}(\tilde{p})/G$. For any $[z],[w] \in \bar{B}_{1/250}(\tilde{p})/G$, define $d([z],[w])$ be the minimum of $d(z,w)$ where $z,w$ are pre-images of $[z],[w]$. By \cite{FukayaYamaguchi1992}, $\bar{B}_{1/250}(\tilde{p}_i)/G_i$, with the quotient metric as above, converges to $\bar{B}_{1/250}(\tilde{p})/G$ .

It is plausible that $\bar{B}_{1/250}(\tilde{p})/G$ is isometric to $\bar{B}_{1/250}(p)$, but the following observation shows we can only get an isometry between $B_{1/800}([\tilde{p}])$ and $B_{1/800}(p)$ where $[\tilde{p}] = \pi (\tilde{p}) \in \bar{B}_{1/250}(\tilde{p})/G$. 
\begin{rem}\label{observation}
	For any $g \in \Gamma_i$ with $g \bar{B}_{1/250}(\tilde{p}_i) \cap \bar{B}_{1/250}(\tilde{p}_i)$ is non-empty, we have $g \in G_i$. As a result, $\bar{B}_{1/250}(\tilde{p}_i)/G_i$ is homeomorphic to $\bar{B}_{1/250}(p_i)$. However, the quotient metric on $\bar{B}_{1/250}(\tilde{p}_i)/G_i$ may not be equal to the metric on $\bar{B}_{1/250}(p_i)$ for the following reason. For any $x_1,x_2 \in \bar{B}_{1/250}(p_i)$, $d(x_1,x_2)$ equals $\min d(\tilde{x}_1,\tilde{x}_2)$ where $\tilde{x}_1,\tilde{x}_2 \in \widetilde{B_{4}(p_i)}$ and $\pi(\tilde{x}_1)=x_1, \pi(\tilde{x}_2)=x_2$. It's possible for any pair of $\tilde{x}_1,\tilde{x}_2$ on which we get the minimum, one of them is out of $\bar{B}_{1/250}(\tilde{p}_i)$, then the quotient metric would be larger. But two metric coincide on $x_1,x_2 \in \bar{B}_{1/800}(p_i)$, since we can choose $\tilde{x}_1 \in \bar{B}_{1/800}(\tilde{p}_i)$ and there must be a $\tilde{x}_2 \in \bar{B}_{1/400}(\tilde{x}_1) \subset \bar{B}_{1/250}(\tilde{p}_i)$ such that $d(x_1,x_2)=d(\tilde{x}_1,\tilde{x}_2)$. As a result, $1/800$-ball at $[\tilde{p_i}]=\pi(\tilde{p_i}) \in \bar{B}_{1/250}(\tilde{p}_i) / G_i$ is isometric to  $\bar{B}_{1/800}(\tilde{p}_i)$. Passing to the limit, $B_{1/800}([\tilde{p}])$ is isometric to $B_{1/800}(p)$.
\end{rem}

\section{Groupfication of $G$}
We need a Lie group action to apply the slice theorem. However, $G$ is only a pseudo-group. To overcome this problem, we will first construct a Lie group $\hat{G}$ out of $G$. This process is related to \cite{FukayaYamaguchi1992}.

Let $F_{G}$ be the free group generated by $e_g$ for all $g \in G$. We quotient $F_G$ by the normal subgroup generated by all elements of the form $e_{g_1}e_{g_2}e_{g_1g_2}^{-1}$, where $g_1,g_2 \in G$ with $g_1g_2 \in G$. The quotient group is denoted as $\hat{G}$. We write elements in $\hat{G}$ as $\hat{g}= [e_{g_1}e_{g_2}...e_{g_k}]$.

\begin{lem}\label{lem_inj}
	The natural map $i:G \to \hat{G}$, $g\mapsto [e_g]$ is injective.
\end{lem}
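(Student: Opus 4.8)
I want to show that the composite $G \xrightarrow{g\mapsto e_g} F_G \twoheadrightarrow \hat G$ is injective, i.e. if $g, g' \in G$ become equal in $\hat G$, then $g = g'$ already as distance-preserving maps on $\bar B_{1/10}(\tilde p)$. Since $i(g) = i(g')$ iff $i(g^{-1}g')$ (whenever $g^{-1}g'\in G$, which holds here because $d(g^{-1}g'\tilde p,\tilde p)$ is small — actually one should be careful and argue directly) is trivial in $\hat G$, the cleanest route is to produce a \emph{well-defined group homomorphism} $\Phi\colon \hat G \to K$ into some genuine group $K$ such that $\Phi\circ i$ is injective on $G$. If such a $\Phi$ exists, then $i(g)=i(g')$ forces $\Phi(i(g))=\Phi(i(g'))$, hence $g=g'$. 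The natural candidate for $K$ is a group of partially-defined isometries, or better, the isometry group of the ``germ'' of $\bar B_{1/10}(\tilde p)$ at $\tilde p$; but partial compositions are exactly what fails to form a group, so I expect the homomorphism has to be built more cleverly.

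\textbf{Constructing $\Phi$.} The idea is: an element $\hat g = [e_{g_1}\cdots e_{g_k}]$ of $\hat G$ should act on a \emph{sufficiently small} ball around $\tilde p$ by the composition $g_1\circ\cdots\circ g_k$, \emph{provided} every partial composition stays in the domain where things are defined. The relators $e_{g_1}e_{g_2}e_{g_1g_2}^{-1}$ are killed precisely so that this partial action is consistent with the pseudo-group product. So I would fix a radius, say $\tilde r$ with $\tilde r$ tiny (e.g. $\tilde r$ chosen so that any $g\in G$ maps $\bar B_{\tilde r}(\tilde p)$ into $\bar B_{2\tilde r}(\tilde p)$, using $d(g\tilde p,\tilde p)\le 1/100$), and attempt to define, for each $\hat g\in\hat G$, a germ of an isometry at $\tilde p$ by composing representatives. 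The point is that a single word $e_{g_1}\cdots e_{g_k}$ may involve intermediate points leaving $\bar B_{1/10}(\tilde p)$, so the composition $g_1\circ\cdots\circ g_k$ need not be literally defined; but as a germ at $\tilde p$ it still makes sense after shrinking the ball enough (depending on the word), and Lemma~\ref{lemma_nofix} guarantees a germ at $\tilde p$ determines the map on all of $\bar B_{1/10}(\tilde p)$ whenever the composed map lies in $G'$. Then $\Phi(\hat g)$ := this germ. I must check $\Phi$ is well-defined on equivalence classes (the relators act trivially on germs — immediate from the definition of the pseudo-product), that it is a homomorphism (composition of germs), and finally that $\Phi(i(g))$ for $g\in G$ recovers the germ of $g$ at $\tilde p$, which by Lemma~\ref{lemma_nofix} determines $g$. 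Hence $\Phi\circ i$ is injective.

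\textbf{Main obstacle.} The delicate point is that $\hat G$ is generated by \emph{arbitrary} words $e_{g_1}\cdots e_{g_k}$ with no control on $k$, and the pseudo-group product $g_1 g_2$ is only defined when $d(g_1g_2\tilde p,\tilde p)$ is small; a long word can ``wander'' so that no honest composition of isometries of $\bar B_{1/10}(\tilde p)$ represents it. Making ``germ at $\tilde p$ obtained by composing'' rigorous requires: (i) showing that for any word there is a radius $\tilde r(\text{word})>0$ on which all the partial compositions $g_j\circ g_{j+1}\circ\cdots\circ g_k$ are defined as genuine isometric embeddings (this follows by downward induction on $j$, shrinking the radius finitely many times, each step using that $g_j$ is $1$-Lipschitz and continuous); (ii) showing the resulting germ is unchanged if we apply a relator, which is where $e_{g_1}e_{g_2}=e_{g_1g_2}$ (when $g_1g_2\in G$) must be compatible with honest composition near $\tilde p$ — true by the very definition of the pseudo-product and uniqueness from Lemma~\ref{lemma_nofix}; and (iii) handling inverses $e_g^{-1}$, which represent $g^{-1}\in G$ (shown in Section~3 to lie in $G'$, indeed in $G$) and compose compatibly near $\tilde p$ since $g g^{-1}$ is the identity on $\bar B_{1/250}(\tilde p)$. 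Once (i)–(iii) are in place, $\Phi$ is a homomorphism into the group of germs of distance-preserving maps at $\tilde p$, and injectivity of $i$ follows because, by Lemma~\ref{lemma_nofix}, two elements of $G$ with the same germ at $\tilde p$ are equal. I would write the proof along exactly these lines, keeping the bookkeeping of radii explicit but light.
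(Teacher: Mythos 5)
Your plan rests on step (i): assigning to every word $e_{g_1}\cdots e_{g_k}$ the germ at $\tilde p$ of the composition $g_1\circ\cdots\circ g_k$, after shrinking the ball around $\tilde p$ depending on the word. This is where the argument fails, and shrinking the radius cannot repair it. Each $g_j\in G$ is only a distance-preserving map from $\bar B_{1/10}(\tilde p)$ to $\bar B_{1/3}(\tilde p)$. In your downward induction, the domain you need $g_j$ to act on is a small ball around the orbit point $g_{j+1}\cdots g_k(\tilde p)$; since $d(g_j\tilde p,\tilde p)$ can be as large as $1/100$, that orbit point may have distance up to $(k-j)/100$ from $\tilde p$, hence may lie outside $\bar B_{1/10}(\tilde p)$ once the word has length more than about ten. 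At that stage $g_j$ is simply not defined near the relevant center, no matter how small $\tilde r$ is chosen — shrinking $\tilde r$ only shrinks a ball around a center that has already exited the domain. So $\Phi$ is not a well-defined map on $F_G$, and in particular there is no homomorphism $\hat G\to K$ of the sort you describe. The wandering you flag as the ``main obstacle'' really is fatal for this approach on an incomplete limit ball, because there is no ambient completed space on which $G$ acts globally and hence no honest germ group of $G$ at $\tilde p$.

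The paper avoids this by never composing in the limit at all. Given $[e_g]=1$, it fixes a single relation $e_g=\prod_j(\cdots)$ in $F_G$ (so the number of factors is finite and independent of $i$), pushes it to the manifold level via the equivariant GH approximations $\phi_i\colon G\to G_i\hookrightarrow\Gamma_i$ — where $\Gamma_i$ is an honest group of deck transformations on $\widetilde{B_4(p_i)}$, so arbitrary products make sense — and then shows each factor $\phi_i(g_{j1})\phi_i(g_{j2})\phi_i(g_{j1}g_{j2})^{-1}$ and its conjugates by elements of $G_i$ are uniformly close to the identity on $\bar B_{1/10}(\tilde p_i)$, using Lemma~\ref{lemma_nofix} and Corollary~\ref{Cor1}. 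Letting $i\to\infty$ forces $g$ to be the identity. If you want to keep a ``homomorphism into a genuine group'' framing, the target must be $\Gamma_i$ (or a space where all compositions are defined), not a germ group at $\tilde p$ in the limit.
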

\begin{proof}
	Assuming there exists $g \in G$ such that $[e_g]=1$, we shall show that $g$ is the identity map on $B_{1/10}(\tilde{p})$. By the definition, there exist $g_j^l, g_{j1}, g_{j2} \in G$ with $g_{j1}g_{j2} \in G$ such that the following holds in the free group $F_G$:
	\begin{equation} \label{e3}
		e_g= \prod_{j=1}^{k} ((\prod_{l=1}^{k_j} e_{g_j^l}^{s_j^l}) (e_{g_{j1}}e_{g_{j2}}e_{g_{j1}g_{j2}}^{-1})^{s_j} (\prod_{l=k_j}^{1} e_{g_j^l}^{-s_j^l})),
	\end{equation}
	where each $s_j$ and $s_j^l$ is  $1$ or $-1$. We see the right-hand side as a sequence of words. 
	The equality means we can find a way to keep canceling adjacent terms and get $e_g$ eventually.
	
	Let $\phi_i: G \to G_i \hookrightarrow \Gamma_i$ be the equivariant GH approximation. Although $\phi_i$ generally is not a homomorphism onto the image, we claim that 
	\begin{equation} \label{e4}
		\phi_i(g)= \prod_{j=1}^{k} ((\prod_{l=1}^{k_j} \phi_i(g_j^l)^{s_j^l}) (\phi_i(g_{j1})\phi_i(g_{j2})\phi_i(g_{j1}g_{j2})^{-1})^{s_j} (\prod_{l=k_j}^{1} \phi_i(g_j^l)^{-s_j^l})).
	\end{equation}
	Notice the right-hand side of (\ref{e4}) comes from (\ref{e3}) by the following procedure: replace $e_h$ by $\phi_i(h)$ and replace $e_h^{-1}$ by $\phi_i(h)^{-1}$ which is the inverse of $\phi_i(h)$. In particular, if two terms in (\ref{e3}) are inverse to each other, the corresponding two terms in (\ref{e4}) are inverse to each other. (\ref{e3}) means that we can simplify the right hand-side to $e_g$ by cancelling adjacent inverse terms. So, we can also simplify the right hand-side of (\ref{e4}) to $\phi_i(g)$ by the same procedure as in (\ref{e3}). 
	
	It remains to show that there exists $\epsilon_i \to 0$ such that $\phi_i(g)$ is $\epsilon_i$-close to the identity on $\bar{B}_{1/250}(\tilde{p}_i)$, that is, $d(\phi_i(g)z,z) \le \epsilon_i$ for all $z \in \bar{B}_{1/250}(\tilde{p}_i)$. If this is true, then $\phi_i(g)$ converges to the identity map on $\bar{B}_{1/250}(\tilde{p})$ and thus on $\bar{B}_{1/10}(\tilde{p})$ as well by lemma \ref{lemma_nofix}; together with $\phi_i(g)$ converging to $g$, this proves $g$ is the identity. 
	
	On $\Gamma_i$ the association law always holds; we consider $$h_{ij}:= \phi_i(g_{j1})\phi_i(g_{j2})\phi_i(g_{j1}g_{j2})^{-1}$$ first. Since $\phi_i$ are equivariant GH approximations, for any large $i$ and any $z \in B_{1/250}(\tilde{p}_i)$, $\phi_i(g_{j1})\phi_i(g_{j2})(z)$ is close to $\phi_i(g_{j1}g_{j2})(z)$. So $h_{ij}(z)$ is close to $z$. Then $h_{ij}$ is close to identity map $1$ on $\bar{B}_{1/250}(\tilde{p}_i)$ and $h_{ij} \in G_i$. Then $h_{ij}$ must be close to identity map $1$ on $\bar{B}_{1/10}(\tilde{p}_i)$ by Corollary \ref{Cor1}. 
	
	We claim that $h^{-1}h_{ij}h$ is close to $1$ on $\bar{B}_{1/10}(\tilde{p}_i)$ for large $i$ and all $h \in G_i$. Choose any $\epsilon >0$. We have shown that $d(h_{ij}x,x) \le \epsilon$ for large $i$ and all $x \in \bar{B}_{1/10}(\tilde{p}_i)$. Since $h \in G_i$, $d(h\tilde{p}_i,\tilde{p}_i) \le 1/100$ always holds. Then $hx' \in \bar{B}_{1/10}(\tilde{p}_i)$ for all $x' \in \bar{B}_{1/250}(\tilde{p}_i)$. Let $x=hx' \in  \bar{B}_{1/10}(\tilde{p}_i)$, we have $d(h_{ij}hx',hx') < \epsilon$ for large $i$ and all $x' \in \bar{B}_{1/250}(\tilde{p}_i)$. In particular, $h^{-1}h_{ij}h$ is close to $1$  on $\bar{B}_{1/250}(\tilde{p}_i)$ when $\epsilon$ is small. Using corollary \ref{Cor1}, $h^{-1}h_{ij}h$ is close to $1$ on $\bar{B}_{1/10}(\tilde{p}_i)$ for all large $i$. 
	
	Since $\phi_i(h_j^l)^{s_j^l} \in G_i$, by induction we get 
	$$(\prod_{l=1}^{k_j} \phi_i(g_j^l)^{s_j^l}) (\phi_i(g_{j1})\phi_i(g_{j2})\phi_i(g_{j1}g_{j2})^{-1})^{s_j} (\prod_{l=k_j}^{1} \phi_i(g_j^l)^{-s_j^l})$$ is close to $1$ on $B_{1/250}(\tilde{p}_i)$. Then $\phi_i(g)$ is close to $1$ on $B_{1/250}(\tilde{p}_i)$. Let $i \to \infty$, we have $\phi_i(g)$ converges to the identity map by lemma \ref{lemma_nofix}. 
	
	Generally, if $[e_g]=[e_h]$ for some $g,h \in G$, then it follows from the same method that $g=h$.
\end{proof}

Now we construct a topology on $\hat{G}$. Let 
$$K=\{g'\in G|d(g'\tilde{p},\tilde{p})<1/200\}$$
and let $\{ U_{\lambda} \}_{\lambda\in \Lambda}$ be the set of all open sets of $G$ that is contained in $K$. We use left translations on $\hat{G}$ and the map $i$ defined in Lemma \ref{lem_inj} to define a basis on $\hat{G}$; more precisely,
$$\{\hat{g}\cdot i(U_{\lambda}) | \hat{g} \in \hat{G},\lambda\in \Lambda \}$$ generates a topology on $\hat{G}$.

\begin{lem}\label{topo}
	For any $g \in G$ and $\lambda \in \Lambda$, $gU_{\lambda}$ is a subset of $G'$. Then $\{g U_{\lambda} \cap G |g\in G,\lambda\in\Lambda\}$ generates the compact-open topology of $G$.
\end{lem}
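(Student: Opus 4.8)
The plan is to verify the two assertions separately, the first being essentially a displacement estimate and the second a comparison of two topologies via the standard fact that a continuous bijection of compact Hausdorff spaces is a homeomorphism. For the first claim, fix $g \in G$ and $\lambda \in \Lambda$, and take any $g' \in U_\lambda \subset K$. I would estimate the displacement of the composite $g g'$ at $\tilde p$: since $d(g'\tilde p,\tilde p)<1/200$ and $g$ is distance-preserving on $\bar B_{1/10}(\tilde p)$ with $d(g\tilde p,\tilde p)\le 1/100$, we get $d(gg'\tilde p,\tilde p)\le d(gg'\tilde p,g\tilde p)+d(g\tilde p,\tilde p)=d(g'\tilde p,\tilde p)+d(g\tilde p,\tilde p)<1/200+1/100<1/30$. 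By the remark following Lemma \ref{lemma_nofix} (the statement that $g_1g_2\in G'$ whenever $d(g_1g_2\tilde p,\tilde p)<1/30$), it follows that $gg' \in G'$, so $gU_\lambda\subset G'$, and hence $gU_\lambda\cap G$ is a well-defined subset of $G$.

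For the topology statement, I would compare the compact-open topology $\tau_{\mathrm{co}}$ on $G$ (as maps $\bar B_{1/10}(\tilde p)\to\bar B_{1/3}(\tilde p)$) with the topology $\tau_{\mathrm{lt}}$ generated by the family $\{gU_\lambda\cap G : g\in G,\lambda\in\Lambda\}$. First I would argue $\tau_{\mathrm{lt}}\subset\tau_{\mathrm{co}}$: each $U_\lambda$ is $\tau_{\mathrm{co}}$-open in $G$ by definition, and left translation by $g\in G$ on the relevant subset of $G'$ is a $\tau_{\mathrm{co}}$-homeomorphism onto its image (composition with a fixed isometry $g$ is continuous, and by Lemma \ref{lemma_nofix} it is injective, with a continuous inverse given by composition with $g^{-1}$), so $gU_\lambda\cap G$ is $\tau_{\mathrm{co}}$-open. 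For the reverse inclusion, it suffices to show that $\tau_{\mathrm{lt}}$ is Hausdorff: then the identity map $(G,\tau_{\mathrm{co}})\to(G,\tau_{\mathrm{lt}})$ is a continuous bijection from a compact space to a Hausdorff space, hence a homeomorphism, giving $\tau_{\mathrm{co}}\subset\tau_{\mathrm{lt}}$. Compactness of $(G,\tau_{\mathrm{co}})$ was established earlier (same proof as Corollary \ref{slice}).

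To see $\tau_{\mathrm{lt}}$ is Hausdorff, take $g_1\ne g_2$ in $G$. By Lemma \ref{lemma_nofix} they differ at some point of $\bar B_{1/10}(\tilde p)$, and then by Corollary \ref{Cor1} there is $r<1/10$ and $\delta>0$ with $d(g_1 z, g_2 z)\ge\delta$ for some $z\in\bar B_r(\tilde p)$; shrinking, one gets a genuine $\tau_{\mathrm{co}}$-open separation. The point is to upgrade this to a $\tau_{\mathrm{lt}}$-open separation: I would check that a sufficiently small $\tau_{\mathrm{co}}$-neighborhood of any $g\in G$ contained in a controlled region is of the form $gU_\lambda\cap G$ for suitable $\lambda$, using that $g^{-1}g$ ranges over a neighborhood of $1$ inside $K$ as $g$ ranges over a small enough neighborhood of $g$ (here one uses $d(g\tilde p,\tilde p)\le 1/100$ together with Corollary \ref{Cor1} to bound displacements). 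I expect the main obstacle to be precisely this last point — showing the left-translated basic sets $gU_\lambda$ form a neighborhood basis at each point and not merely a subbasis, i.e. carefully tracking displacements so that $g^{-1}\cdot(\text{small nbhd of }g)$ lands inside $K$ and is open in $G$. Once that is in place, the compact-to-Hausdorff argument closes the proof.
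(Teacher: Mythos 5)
Your plan for the second half is a genuinely different route from the paper. The paper shows $\tau_{\mathrm{co}}\subset\tau_{\mathrm{lt}}$ by direct computation: after shrinking a $\tau_{\mathrm{co}}$-open set $O$ to $O'=O\cap T$, it identifies $g^{-1}O'\cap K$ as one of the $U_\lambda$, checks that $g(g^{-1}O'\cap K)$ is open in $G'$, and then writes $O'\cap G=\bigcup_{g\in G}\bigl(g(g^{-1}O'\cap K)\cap G\bigr)$. You instead appeal to the compact-to-Hausdorff trick. That reduction is sound, and it actually makes life easier than you feared: you only need $(G,\tau_{\mathrm{lt}})$ to be Hausdorff, which is strictly weaker than the ``neighborhood basis'' claim you flag as the main obstacle. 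Hausdorffness closes quickly: if $g_1\neq g_2$ in $G$, Lemma \ref{lemma_nofix} gives $z_0\in\bar B_{1/250}(\tilde p)$ with $d(g_1z_0,g_2z_0)=\delta>0$; set $U=K\cap\{g'\in G\,:\,d(g'z_0,z_0)<\delta/4\}\in\Lambda$; then $g_1U\cap G$ and $g_2U\cap G$ are disjoint $\tau_{\mathrm{lt}}$-open sets containing $g_1$ and $g_2$ respectively (any $g''\in g_1U\cap g_2U$ would give $d(g''z_0,g_iz_0)<\delta/4$ for both $i$, forcing $d(g_1z_0,g_2z_0)<\delta/2$, a contradiction; and $1\in U$ so $g_i\in g_iU$). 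Note also that if you did prove the stronger statement you sketch at the end (that every small $\tau_{\mathrm{co}}$-neighborhood of any $g$ is of the form $gU_\lambda\cap G$), it would give $\tau_{\mathrm{co}}\subset\tau_{\mathrm{lt}}$ outright and make the compactness step redundant; commit to one route.

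One point to tighten in your verification of $\tau_{\mathrm{lt}}\subset\tau_{\mathrm{co}}$: the assertion that ``left translation by $g$ is a $\tau_{\mathrm{co}}$-homeomorphism onto its image'' tacitly uses the preliminary observation at the start of the paper's proof (via Lemma \ref{lemma_nofix}) that the compact-open topology on $G$ induced from $C(\bar B_{1/10}(\tilde p),\bar B_{1/3}(\tilde p))$ coincides with the one induced from $C(\bar B_{1/250}(\tilde p),\bar B_{1/3}(\tilde p))$. Only after restricting to compacta in the smaller ball is the composition $g\circ g'$ unambiguously defined, so that translated basic open conditions can be rewritten as explicit basic open conditions; you should make this reduction explicit as the paper does. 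Your first half (the displacement estimate $d(gg'\tilde p,\tilde p)<1/200+1/100<1/30$ giving $gU_\lambda\subset G'$) matches the paper's use of the fact that $g_1g_2\in G'$ when $g_1,g_2\in G$.
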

\begin{proof}
	Recall that the compact-open topology of $G$ comes from $G$ being a subset of $C(\bar{B}_{1/10}(\tilde{p}),\bar{B}_{1/3}(\tilde{p}))$. We first show that this is the same topology comes from $G\subset C(\bar{B}_{1/250}(\tilde{p}),\bar{B}_{1/3}(\tilde{p}))$. Since for locally compact metric spaces, compact-open topology is same as uniform convergence topology, it is enough prove that if $g_i \in G$ uniformly converges to $g \in G$ on $\bar{B}_{1/250}(\tilde{p})$, then also on $\bar{B}_{1/10}(\tilde{p})$. By precompactness, we may assume $g_i$ uniformly converges to some $g'$ on $\bar{B}_{1/10}(\tilde{p})$. It follows from Lemma \ref{lemma_nofix} that $g=g'$. From now on, we will use the compact-open topology from $C(\bar{B}_{1/250}(\tilde{p}),\bar{B}_{1/3}(\tilde{p}))$.
	
	Next, we show that $g U_{\lambda} \cap G$ is open in $G$ for any $g \in G$ and $\lambda \in \Lambda$. Regarding this $U_\lambda$, we may assume there exist a compact set $C \subset \bar{B}_{1/250}(\tilde{p})$ and an open set $V \subset \bar{B}_{1/3}(\tilde{p})$ such that $O=\{g' \in G' | g'C \subset V \}$ and  $U_{\lambda}=K \cap O$. Recall that $d(g'\tilde{p},\tilde{p}) < 1/200$ for all $g' \in U_{\lambda} \subset K$, we have $g'C \subset B_{1/100}(\tilde{p})$. So, we can assume $V \subset B_{1/100}(\tilde{p})$ by replacing $V$ by $V\cap B_{1/100}(\tilde{p})$. 
	For any $g \in G$, $gV$ is well-defined. Then 
	\begin{align*}
		gU_{\lambda} = g(K\cap O) & = \{ gg' | g' \in K \subset G, g'C \subset V \} \\
		&= \{ g''\in G' | d(g''\tilde{p}, g\tilde{p})<1/200, g''C \subset gV \},
	\end{align*}
	which is open in $G'$; here we use the fact that $gg' \in G'$ if $g,g' \in G$.
	
	Then we prove that any open set in $G$ can be generated by $\{g U_{\lambda} \cap G |g\in G,\lambda\in\Lambda\}$. As above, we can assume that this open set has the form $O= \{g' \in G' | g'C \subset V \}$, where $C\subset \bar{B}_{1/250}(\tilde{p})$ is compact and $V\subset \bar{B}_{1/3}(\tilde{p})$ is open. Let 
	\begin{equation*}
		T= \{ g' \in G' | d(g'\tilde{p},\tilde{p}) < 1/95 \},
	\end{equation*}
	which is an open set containing $G$. Choose $O'=O\cap T$, then $O\cap G = O'\cap G$. It suffices to show  that $O' \cap G$ can be generated by $\{g U_{\lambda} \cap G |g\in G,\lambda\in\Lambda\}$. Notice that $gO' \subset G'$ for all $g \in G$.  
	
	For any $g' \in O' \subset T$, $g'C' \subset B_{1/40}(\tilde{p})$; therefore, we may assume $V \subset B_{1/40}(\tilde{p})$ and $g^{-1}V$ is well-defined for all $g^{-1} \in G$.  
	Then by the same argument above,
	\begin{equation*}
		g^{-1}O'= \{ g'' \in G' | d(g''\tilde{p}, g^{-1}\tilde{p}) < 1/95, g''C \subset g^{-1}V \} 
	\end{equation*}
	is open in $G'$. By our choice of $\{U_{\lambda} |\lambda\in\Lambda\}$, $g^{-1}O'\cap K$ is in this family of open sets. It is direct to check
	\begin{equation*}
		O' \cap G=\cup_{g \in G} ( g(g^{-1}O' \cap K) \cap G).
	\end{equation*}
	Hence $O' \cap G$ can be generated by $\{g U_{\lambda} \cap G |g\in G,\lambda\in\Lambda\}$.  
\end{proof}

\begin{lem}\label{lem_embed}
	$i:G \to i(G)$ is a homeomorphism. Moreover, the image $i(G)$ contains a neighborhood of $1\in \hat{G}$.
\end{lem}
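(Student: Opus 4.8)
The plan is to prove the statement in three pieces: (i) $i(G)$ contains a neighborhood of $1\in\hat G$, (ii) $i$ is an open map onto $i(G)$, and (iii) $i$ is continuous. Since $i$ is injective by Lemma~\ref{lem_inj}, (ii) and (iii) together give that $i$ is a homeomorphism onto $i(G)$. Piece (i) is immediate: the identity map $K$ is open in $G$, hence (being contained in itself) belongs to the family $\{U_\lambda\}_{\lambda\in\Lambda}$, so $i(K)=1\cdot i(K)$ is open in $\hat G$ by the very definition of the topology; it contains $i(1)=1$ (taking $g_1=g_2=1$ in the defining relations shows $[e_1]=1$) and it is contained in $i(G)$, so it is the desired neighborhood.

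The technical heart of (ii) and (iii) is the identity
\begin{equation}\label{eq:starid}
	i(g_0U_\lambda\cap G)=\bigl(i(g_0)\,i(U_\lambda)\bigr)\cap i(G),\qquad g_0\in G,\ \lambda\in\Lambda.
\end{equation}
The inclusion $\subseteq$ is just the defining relations of $\hat G$: if $x=g_0u\in G$ with $u\in U_\lambda$, then $e_{g_0}e_ue_x^{-1}$ is a relator, so $i(x)=i(g_0)i(u)$. For $\supseteq$, suppose $i(x)=i(g_0)i(u)$ with $x\in G$ and $u\in U_\lambda$, so $[e_x^{-1}e_{g_0}e_u]=1$ in $\hat G$. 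I would run the $\phi_i$-approximation argument of Lemma~\ref{lem_inj} (in the general form stated there) to conclude that $\phi_i(x)^{-1}\phi_i(g_0)\phi_i(u)$ is $\epsilon_i$-close to the identity on $\bar B_{1/250}(\tilde p_i)$ with $\epsilon_i\to 0$; here one uses that the displacements $1/100$, $1/200$, $1/100$ keep every partial composition inside $\bar B_{1/10}(\tilde p_i)$ and keep the product inside $G_i'$. Passing to a limit and applying Lemma~\ref{lemma_nofix}, the map $x^{-1}g_0u$ equals the identity on $\bar B_{1/10}(\tilde p)$, that is $x=g_0u$ in $G'$; since $x\in G$ this forces $g_0u\in G$, hence $x\in g_0U_\lambda\cap G$.

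Granting \eqref{eq:starid}, openness of $i$ follows formally: by Lemma~\ref{topo} the sets $gU_\lambda\cap G$ generate the topology of $G$, each $i(gU_\lambda\cap G)=(i(g)i(U_\lambda))\cap i(G)$ is open in $i(G)$ because $i(U_\lambda)$ is open in $\hat G$ and left translation $L_{i(g)}$ is a homeomorphism of $\hat G$ (it and its inverse permute the generating family), and — $i$ being injective — images of finite intersections and unions of these sets are again open in $i(G)$. For continuity it suffices to show each $i^{-1}(\hat g\,i(U_\lambda))$ is open in $G$: given $g_0$ in this preimage, write $i(g_0)=\hat g\,i(u_0)$ with $u_0\in U_\lambda$, shrink $U_\lambda$ to the open set $U'=\{u\in U_\lambda:\ d(u_0^{-1}u\,\tilde p,\tilde p)<1/200\}$ and put $U_\mu:=u_0^{-1}U'$, which is an open subset of $K$ and hence a $U_\lambda$; using the relations one checks $i(g_0)i(U_\mu)=\hat g\,i(U')\subseteq\hat g\,i(U_\lambda)$, and then by \eqref{eq:starid} the set $g_0U_\mu\cap G=i^{-1}\bigl(i(g_0)i(U_\mu)\cap i(G)\bigr)$ is an open neighborhood of $g_0$ contained in $i^{-1}(\hat g\,i(U_\lambda))$.

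The hard part will be the $\supseteq$ direction of \eqref{eq:starid}: this is exactly the point where a purely combinatorial relation in the abstract group $\hat G$ has to be upgraded to an honest equality of distance-preserving maps, and it needs both the $\phi_i$-bookkeeping behind Lemma~\ref{lem_inj} and the rigidity of Lemma~\ref{lemma_nofix}, together with a careful accounting of base-point displacements so that every composition that appears is genuinely defined on $\bar B_{1/10}(\tilde p)$ and the relevant products stay inside $G_i'$ and $G'$. Everything else is formal manipulation of the subbasis defining the topology of $\hat G$ together with Lemmas~\ref{lem_inj} and \ref{topo}.
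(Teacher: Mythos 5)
Your proposal is correct and follows the same overall decomposition as the paper (injectivity from Lemma~\ref{lem_inj}, then openness, then continuity), but it does genuinely more work on one step. The paper dismisses openness of $i$ with the single phrase ``by the construction of topology on $\hat G$,'' while in fact openness requires exactly the $\supseteq$ inclusion in your identity \eqref{eq:starid}: one must show that a purely combinatorial relation $[e_x]=[e_{g_0}][e_u]$ in $\hat G$ (with $x,g_0\in G$, $u\in U_\lambda\subset K$) forces $x=g_0u$ as elements of $G'$. You correctly identify this as the crux and correctly reduce it to the $\phi_i$-approximation argument of Lemma~\ref{lem_inj}; strictly speaking it is a three-letter extension of the two-letter remark (``if $[e_g]=[e_h]$ then $g=h$'') closing that lemma's proof, and the displacement budget $1/100+1/100+1/200<1/30$ does keep every intermediate composition inside $\bar B_{1/10}(\tilde p_i)$ and the product inside $G'_i$, so the reduction goes through. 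Your continuity argument is a minor variant of the paper's (the paper simply takes $U=gg_\lambda^{-1}U_\lambda\cap G$ and checks $i(U)\subset\hat g\,i(U_\lambda)$ using only the easy $\subseteq$ direction of the relations; you shrink to $U'$ and translate to $U_\mu=u_0^{-1}U'$, which is equivalent but slightly more laborious). One shortcut worth noting, which neither you nor the paper uses: $G$ is compact and $\hat G$ is Hausdorff (for $\hat g_1\neq\hat g_2$ one separates $\hat g_1 i(U_\mu)$ and $\hat g_2 i(U_\mu)$ using injectivity of $i$ and the Hausdorffness of $G$), so a continuous injection of $G$ into $\hat G$ is automatically a homeomorphism onto its image; this would let you skip the $\supseteq$ direction of \eqref{eq:starid} entirely and makes the paper's terse treatment of openness recoverable.
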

\begin{proof}
	We have proved injection in Lemma \ref{lem_inj}. Also, $i:G \to i(G)$ is an open map by the construction of topology on $\hat{G}$. We show the continuity of $i$. For any $\hat{g}i(U_{\lambda})$, where $\hat{g}\in\hat{g}$ and $\lambda\in\Lambda$, we shall show that $i^{-1}(\hat{g}i(U_{\lambda}))$ is open in $G$. Assume $i^{-1}(\hat{g}i(U_{\lambda}))$ is non-empty. Let $g\in i^{-1}(\hat{g}i(U_{\lambda}))$.  This means that there exists $g_{\lambda} \in U_{\lambda}$ such that $[e_g]= \hat{g} [e_{g_{\lambda}}] \in \hat{g}i(U_{\lambda}) \cap i(G)$. We need to find an open neighborhood of $g$ contained in $i^{-1}(\hat{g}i(U_{\lambda}))$. Since $U_{\lambda}$ is an open subset of $K$, $U:=gg_{\lambda}^{-1}U_{\lambda} \cap G$ is an open neighborhood of $g$ in $G$ by the same proof of Lemma \ref{topo}.  In $\hat{G}$, we have $$i(U) \subset [e_{g}][e_{g_{\lambda}^{-1}}]i(U_{\lambda}) = \hat{g}[e_{g_{\lambda}}] [e_{g_{\lambda}^{-1}}]i(U_{\lambda}) =\hat{g} i(U_{\lambda}).$$ Thus $U \subset i^{-1}(\hat{g}i(U_{\lambda}))$. 
\end{proof}

Now we show that $\hat{G}$ is a Lie group. By Lemma \ref{lem_embed}, it suffices to show $G$ contains no small groups, i.e., a neighborhood of the identity contained in $G$ has no non-trivial group.

\begin{lem}
	$G$ contains no small groups.
\end{lem}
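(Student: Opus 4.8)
The plan is to prove the contrapositive by the displacement technique behind Theorem~\ref{isom_lie} (Cheeger--Colding, Colding--Naber), adapted to the incomplete, pseudo-group setting exactly as in the proof of Lemma~\ref{lemma_nofix}. Suppose, for contradiction, that every neighborhood of $1$ in $G$ contains a nontrivial subgroup. Since by Lemma~\ref{topo} the topology of $G$ is the compact-open (equivalently uniform convergence) topology coming from $G\subset C(\bar B_{1/250}(\tilde p),\bar B_{1/3}(\tilde p))$, which is metrizable, we obtain a sequence of nontrivial subgroups $H_j\subset G$ with $\sup_{z\in\bar B_{1/250}(\tilde p)}\rho_{H_j}(z)\to 0$, hence, by the limiting form of Corollary~\ref{Cor1}, $\delta_j:=\sup_{z\in\bar B_{1/20}(\tilde p)}\rho_{H_j}(z)\to 0$. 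Let $k$ be the dimension of $B_1(\tilde p)$ in the sense of \cite{ColdingNaber2012}.

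Fix a regular point $z_*\in B_{1/80}(\tilde p)\cap\mathcal{R}_k$ once and for all, say $z_*\in(\mathcal{R}_k)_{\epsilon,\delta_*}$ for a given $\epsilon>0$. For $j$ large we have $\eta_j:=20\delta_j<\delta_*$ and $B_{\eta_j}(z_*)\subset\bar B_{1/20}(\tilde p)$, so $D_{H_j,\eta_j}(z_*)\le\delta_j=\eta_j/20$. On the other hand, choosing $h_j\in H_j\setminus\{1\}$ and applying Lemma~\ref{lemma_nofix}, $h_j$ fixes no ball $B_t(\tilde p)$, so there is a regular point $w_j\in B_{1/40}(\tilde p)\cap\mathcal{R}_k$ with $h_jw_j\ne w_j$; picking $\theta_j>0$ small enough that $w_j\in(\mathcal{R}_k)_{\epsilon,2\theta_j}$ and $\theta_j<d(w_j,h_jw_j)$ gives $D_{H_j,\theta_j}(w_j)\ge d(w_j,h_jw_j)>\theta_j/20$. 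As in Lemma~\ref{lemma_nofix}, every geodesic from $w_j$ to $z_*$ stays inside $B_{3/40}(\tilde p)$, hence away from $\partial\bar B_{1/10}(\tilde p)$, so $D_{H_j,r}(x)$ is continuous along it, and by \cite{CheegerColding2000a} (for $k=n$) and \cite{ColdingNaber2012} (for general $k$) we may take such a geodesic inside $(\mathcal{R}_k)_{\epsilon,\delta_j'}$ for some $\delta_j'>0$. Running the three cases of the proof of Lemma~\ref{lemma_nofix} with $\lambda_j<\min\{\theta_j,\eta_j,\delta_j'\}$ and the intermediate value theorem then produces $x_j\in(\mathcal{R}_k)_{\epsilon,\tau_j}$ and a radius $0<r_j$ with $\tau_j>r_j$ and
\begin{equation}\nonumber
	D_{H_j,r_j}(x_j)=r_j/20 .
\end{equation}

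Now carry out the construction with $\epsilon=\epsilon_j\to0$, taking $j$ large at each step; since $r_j\le 20\delta_j\to 0$ while $\tau_j>r_j$, after passing to a subsequence the rescalings $(B_{r_j}(x_j),r_j^{-1}d,H_j)$ converge in the equivariant Gromov--Hausdorff sense (cf. \cite{FukayaYamaguchi1992}) to $(B_1^k(0),d_0,H_\infty)$, where $H_\infty$ acts on $B_1^k(0)$ (equivalently on $\mathbb{R}^k$) by isometries; the group law passes to the limit because at scale $r_j$ all the products of elements of $H_j$ in play are well-defined on the relevant balls and isometries of $B_1^k(0)$ extend uniquely to $\mathbb{R}^k$. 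From $D_{H_j,r_j}(x_j)=r_j/20$ and continuity of displacement under equivariant convergence we get $D_{H_\infty,1}(0)=1/20$, so $H_\infty$ is nontrivial, contradicting the standard fact that no nontrivial subgroup $H\subset\mathrm{Isom}(\mathbb{R}^k)$ satisfies $D_{H,1}(0)\le1/20$. Hence some neighborhood of $1$ in $G$ contains no nontrivial subgroup, i.e.\ $G$ contains no small groups.

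The step I expect to be the main obstacle is carrying the Cheeger--Colding--Colding--Naber displacement scheme through in the incomplete, pseudo-group setting while simultaneously shrinking both $\epsilon_j$ and the subgroups $H_j$: one must keep every ball, geodesic, and rescaled limit used strictly inside $\bar B_{1/10}(\tilde p)$ so that Lemma~\ref{lemma_nofix}, the continuity of $D_{H,r}$, and the geodesic-through-regular-points statement all remain available, and one must check that the rescaled equivariant limit inherits an honest group structure. Fortunately these delicate points are already isolated in Lemma~\ref{lemma_nofix} and Corollary~\ref{Cor1}, so the argument here is essentially a parametrized version of the one given there.
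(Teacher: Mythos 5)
Your proof is correct and follows essentially the same approach as the paper's: a contradiction argument via the Cheeger--Colding/Colding--Naber displacement scheme, reusing the three-case intermediate-value-theorem argument from the proof of Lemma~\ref{lemma_nofix} to locate a point $x_j$ and scale $r_j\to 0$ with $D_{H_j,r_j}(x_j)=r_j/20$, then deriving a contradiction from the rescaled equivariant limit $(B_1^k(0),d_0,H_\infty)$ with $D_{H_\infty,1}(0)=1/20$. In fact you are somewhat more careful than the paper's terse write-up: you explicitly record the extra constraint $\lambda_j<\delta_j'$ to ensure the regularity scale $\tau_j$ exceeds $r_j$ in the third case, you verify $r_j\le 20\delta_j\to 0$ in each case, and you spell out the final rescaled-limit contradiction that the paper leaves to the reader.
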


\begin{proof}
	Suppose that $G$ has a sequence of subgroups $H_i$ such that $D_{H_i,1/10}(\tilde{p}) \to 0$. Similar to the proof of Lemma \ref{lemma_nofix}, we shall find $D_{H_{i(\epsilon)}, r(\epsilon)}(x(\epsilon))= r(\epsilon)/20$ to derive a contradiction.

	By Lemma \ref{lemma_nofix}, there exists a regular point $w_i \in B_{1/100}(\tilde{p})$ such that $h(w_i) \neq w_i$ for some $h \in H_i$. By passing to a subsequence, we may assume $w_i \to w$ and $s = (1/10 - d(w, \tilde{p}))/3 >0$. Consider $U= B_{s}(w)$. For any $x,y \in U$, all geodesics from $x$ to $y$ are contained in $B_{1/10 - s}(\tilde{p})$. 
	
	Let $z \in (\mathcal{R}_k)_{\epsilon, 2\eta} $ be a regular point in $U$, assume $\eta$ is small enough such that $B_{2\eta}(z) \subset U$. Then for large $i$, we have $D_{H_i, \eta}(z) \le \eta/20$. Since $w_i$ is a regular point, there exists $\theta$ such that $w_i \in (\mathcal{R}_k)_{\epsilon, 2\theta}$. We may choose $\theta < d(h(w_i), w_i)$ for one $h \in H_i$ with $h(w_i) \neq w_i$. Then we have $D_{H_i, \theta}(w_i) \ge \theta/20$. Let $\lambda <\min( \theta, \eta)$. By the same proof of Lemma \ref{lemma_nofix}, we can find $z$ such that $D_{H_i,\lambda}(z(\epsilon)) = \lambda/20$, which will lead to a contradiction.
\end{proof}

\section{Construction of a $\hat{G}$-space}
We define 
\begin{equation*}
	\hat{G} \times_G \bar{B}_{1/250}(\tilde{p})= \hat{G} \times \bar{B}_{1/250}(\tilde{p}) / \sim,
\end{equation*} 
where the equivalence relation is given by $(\hat{g}i(g),\tilde{x}) \sim (\hat{g},g\tilde{x})$ for all $g \in G, \hat{g} \in \hat{G}, \tilde{x} \in \bar{B}_{1/250}(\tilde{p})$ with $g\tilde{x} \in \bar{B}_{1/250}(\tilde{p})$. We endow $\hat{G} \times_G \bar{B}_{1/250}(\tilde{p})$ with the quotient topology. We will always write $[\hat{g}, \tilde{x}]$ as elements in $\hat{G} \times_G \bar{B}_{1/250}(\tilde{p})$ and $(\hat{g}, \tilde{x})$ as elements in $\hat{G} \times \bar{B}_{1/250}(\tilde{p})$. By $\hat{g}'[\hat{g},\tilde{x}]=[\hat{g}'\hat{g},\tilde{x}]$, $\hat{G}$ acts on $\hat{G} \times_G \bar{B}_{1/250}(\tilde{p})$.

\begin{lem}\label{homeo}
	$\hat{G}$ acts as homeomorphisms on $\hat{G} \times_G \bar{B}_{1/250}(\tilde{p})$.
\end{lem}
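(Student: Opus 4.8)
The plan is to deduce everything from the universal property of the quotient topology. Write $q\colon \hat{G}\times \bar{B}_{1/250}(\tilde{p})\to \hat{G}\times_G \bar{B}_{1/250}(\tilde{p})$ for the quotient map.

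First I would check that the action descends to the quotient. The equivalence relation $\sim$ is the transitive closure of the elementary relations $(\hat{g}i(g),\tilde{x})\,R\,(\hat{g},g\tilde{x})$ taken over all $\hat g\in\hat G$, $g\in G$, $\tilde x\in\bar B_{1/250}(\tilde p)$ with $g\tilde x\in\bar B_{1/250}(\tilde p)$. For a fixed $\hat{g}'\in\hat{G}$, left translation by $\hat{g}'$ on the first factor carries such an elementary relation to $(\hat{g}'\hat{g}i(g),\tilde{x})\,R\,(\hat{g}'\hat{g},g\tilde{x})$, which is again an elementary relation; hence left translation preserves the full closure $\sim$. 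Consequently the map $(\hat{g},\tilde{x})\mapsto [\hat{g}'\hat{g},\tilde{x}]$ is constant on the fibers of $q$, so $\hat{g}'\cdot[\hat{g},\tilde{x}]:=[\hat{g}'\hat{g},\tilde{x}]$ is well defined; the identity of $\hat{G}$ acts trivially and $\hat{g}'_1\cdot(\hat{g}'_2\cdot[\hat{g},\tilde{x}])=(\hat{g}'_1\hat{g}'_2)\cdot[\hat{g},\tilde{x}]$, so this is a genuine action of $\hat G$.

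Next, continuity of each $\hat{g}'\cdot\,$. The map $\hat{G}\times \bar{B}_{1/250}(\tilde{p})\to \hat{G}\times \bar{B}_{1/250}(\tilde{p})$ sending $(\hat{g},\tilde{x})$ to $(\hat{g}'\hat{g},\tilde{x})$ is a homeomorphism, since $\hat{G}$ is a Lie group (Section 4) and hence left translation on $\hat{G}$ is a homeomorphism. Composing with $q$ gives a continuous map $\hat{G}\times \bar{B}_{1/250}(\tilde{p})\to \hat{G}\times_G \bar{B}_{1/250}(\tilde{p})$ that, by the previous paragraph, factors through $q$; by the universal property of the quotient topology it descends to a continuous map, which is exactly $\hat{g}'\cdot\,$. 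Applying the same argument to $(\hat{g}')^{-1}$ shows $(\hat{g}')^{-1}\cdot$ is continuous, and since these two maps are mutually inverse, $\hat{g}'\cdot$ is a homeomorphism of $\hat{G}\times_G \bar{B}_{1/250}(\tilde{p})$.

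Finally, for the later application of Palais's slice theorem I would also record joint continuity of the action: the evaluation $(\hat{g}',(\hat{g},\tilde{x}))\mapsto(\hat{g}'\hat{g},\tilde{x})$ on $\hat{G}\times(\hat{G}\times \bar{B}_{1/250}(\tilde{p}))$ is continuous, and since $\hat{G}$ is locally compact Hausdorff, $\mathrm{id}_{\hat{G}}\times q$ is again a quotient map, so the composite with $q$ descends to a continuous action map. The only points needing genuine care are the well-definedness step — i.e.\ that left translation respects the full transitive closure $\sim$, not merely the generating relations — and the standard but slightly delicate fact that the product of the quotient map $q$ with the identity of the locally compact space $\hat{G}$ is still a quotient map; the rest is formal.
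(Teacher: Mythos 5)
Your proof is correct and proceeds in essentially the same spirit as the paper's: descend the translation $(\hat{g},\tilde{x})\mapsto(\hat{g}'\hat{g},\tilde{x})$ from $\hat{G}\times\bar{B}_{1/250}(\tilde{p})$ to the quotient using the commutation of left translation with the quotient map $q$. The paper phrases this as openness of the bijection (via $\pi^{-1}(\hat{g}'U)=\hat{g}'\pi^{-1}(U)$), whereas you phrase it as continuity via the universal property of quotients; the two are equivalent once one notes that the inverse is $(\hat{g}')^{-1}$-translation and runs the same argument. You add two things the paper leaves implicit, and both are worth recording: the explicit check that left translation respects the transitive closure of the generating relation (so the action on equivalence classes is well defined), and the joint continuity of the action map using that $\mathrm{id}_{\hat{G}}\times q$ is still a quotient map because $\hat{G}$ is locally compact Hausdorff. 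The latter is genuinely needed for the application, since Palais's Theorem \ref{topo_slice} concerns $G$-spaces, i.e.\ jointly continuous actions, whereas the lemma as stated only asserts that each individual $\hat{g}'$ acts by a homeomorphism.
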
 
\begin{proof}
	We fix any element $\hat{g}'\in \hat{G}$. The map
	$$\hat{g}':\hat{G} \times_G \bar{B}_{1/250}(\tilde{p})\to \hat{G} \times_G \bar{B}_{1/250}(\tilde{p}),\quad [\hat{g},\tilde{x}]\mapsto \hat{g}'[\hat{g},\tilde{x}]$$
	is bijective. It suffices to show that this is an open map. Let $U$ be an open subset of $\hat{G} \times_G \bar{B}_{1/250}(\tilde{p})$. Let $\pi: \hat{G} \times \bar{B}_{1/250}(\tilde{p}) \to \hat{G} \times_G \bar{B}_{1/250}(\tilde{p})$ be the quotient map. Note that $\pi$ commutes with $\hat{G}$-action:
	$$\pi(\hat{g}'(\hat{g},\tilde{x}))=[\hat{g}'\hat{g},\tilde{x}]=\hat{g}'[\hat{g},\tilde{x}]=\hat{g}'(\pi(\hat{g},\tilde{x})).$$
	Thus 
	$$\pi^{-1}(\hat{g}'U)=\hat{g}'\pi^{-1}(U).$$
	Since $\hat{g}'\pi^{-1}(U)$ is open in $\hat{G} \times \bar{B}_{1/250}(\tilde{p})$, we conclude that $\hat{g}'U$ is open as well.
\end{proof}

The following lemma translates the study of $\hat{G}$ to $G$ in certain cases.  

\begin{lem} \label{lemma2}
	Suppose $[\hat{g}, \tilde{x}]=[1, \tilde{x}']$ for given $\hat{g} \in \hat{G}, \tilde{x} \in \bar{B}_{1/250}(\tilde{p}), \tilde{x}' \in \bar{B}_{1/250}(\tilde{p})$, then $\hat{g}= [e_g]$ for some $g \in G$ with $g(\tilde{x})=\tilde{x}'$.  
\end{lem}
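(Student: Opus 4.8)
The plan is to unwind the equivalence relation defining $\hat{G} \times_G \bar{B}_{1/250}(\tilde{p})$ and track a chain of elementary moves connecting the pair $(\hat{g},\tilde{x})$ to $(1,\tilde{x}')$. Since the quotient $\hat{G} \times_G \bar{B}_{1/250}(\tilde{p})$ is formed from the relation $(\hat{h}i(g), \tilde{y}) \sim (\hat{h}, g\tilde{y})$ whenever $g\tilde{y} \in \bar{B}_{1/250}(\tilde{p})$, the hypothesis $[\hat{g},\tilde{x}] = [1,\tilde{x}']$ means there is a finite sequence of pairs $(\hat{g}_0,\tilde{x}_0) = (\hat{g},\tilde{x}), (\hat{g}_1,\tilde{x}_1), \dots, (\hat{g}_m,\tilde{x}_m) = (1,\tilde{x}')$ in $\hat{G}\times \bar{B}_{1/250}(\tilde{p})$, each consecutive pair related by a single application of $\sim$ (in one direction or the other). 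First I would record that each elementary move replaces $(\hat{g}_j, \tilde{x}_j)$ by $(\hat{g}_j \cdot i(h_j)^{\pm 1}, h_j^{\mp 1}\tilde{x}_j)$ for some $h_j \in G$, subject to the constraint that both second coordinates lie in $\bar{B}_{1/250}(\tilde{p})$.

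Next I would set $g_j := $ the partial composition $h_0^{\epsilon_0} h_1^{\epsilon_1}\cdots h_{j-1}^{\epsilon_{j-1}}$ (with appropriate signs $\epsilon$) read off from the first $j$ moves, so that formally $\hat{g}_j = \hat{g}\cdot i(g_j)^{-1}$ and $\tilde{x}_j = g_j \tilde{x}$. The key point is that, along the way, each intermediate product $g_j$ actually makes sense as an element of $G$ — not merely of $G'$ — and satisfies $g_j(\tilde{x}) = \tilde{x}_j \in \bar{B}_{1/250}(\tilde{p})$. This is exactly where Lemma \ref{lemma4} enters: if $g_j \in G$ with $g_j(\tilde{x}) \in \bar{B}_{1/240}(\tilde{p})$ and $h_j^{\pm 1} \in G$, then because $(g_j h_j^{\mp 1})(\tilde{x}) = \tilde{x}_{j+1} \in \bar{B}_{1/250}(\tilde{p}) \subset \bar{B}_{1/240}(\tilde{p})$, Lemma \ref{lemma4} gives $g_{j+1} = g_j h_j^{\mp 1} \in G$. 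Induction on $j$ then yields $g := g_m \in G$ with $g(\tilde{x}) = \tilde{x}_m = \tilde{x}'$, and by construction in $\hat{G}$ we have $\hat{g} = \hat{g}_0 = \hat{g}_m \cdot i(g_m) = 1 \cdot i(g) = [e_g]$, using that $i$ respects products of elements of $G$ whenever the product stays in $G$ (the defining relations of $\hat{G}$), which is precisely what we have verified at each step.

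I expect the main obstacle to be bookkeeping the base-point constraints carefully: an elementary move $(\hat{g}_j,\tilde{x}_j) \sim (\hat{g}_j i(h_j), h_j^{-1}\tilde{x}_j)$ is only legal when \emph{both} $\tilde{x}_j$ and $h_j^{-1}\tilde{x}_j$ lie in $\bar{B}_{1/250}(\tilde{p})$, and one must check this is enough to invoke Lemma \ref{lemma4} at every stage — i.e. that the hypothesis $\tilde{x}_j \in \bar{B}_{1/240}(\tilde{p})$ needed there is implied by $\tilde{x}_j \in \bar{B}_{1/250}(\tilde{p})$, which it is since $1/250 < 1/240$. A secondary point to be careful about is that the relation $\sim$ may also be applied with $\hat{g}_j$ itself not of the form $\hat g \cdot i(\text{something in }G)$ a priori; but since we start from $\hat{g}_0 = \hat{g}$ and only multiply by images $i(h_j)^{\pm 1}$ of elements of $G$, every $\hat{g}_j$ stays in the coset $\hat{g}\cdot i(\langle G\rangle)$, so the inductive identification $\hat{g}_j = \hat{g}\,i(g_j)^{-1}$ with $g_j\in G$ is consistent. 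Once these constraints are tracked, the conclusion $\hat g = [e_g]$ with $g(\tilde x) = \tilde x'$ follows immediately.
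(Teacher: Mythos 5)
Your proposal is correct and follows essentially the same strategy as the paper: unwind the equivalence relation on $\hat{G}\times \bar{B}_{1/250}(\tilde p)$ into a finite chain of elementary moves, track the cumulative product $g_j\in G$ acting on $\tilde x$, and use Lemma~\ref{lemma4} at each step (the inclusions $\bar{B}_{1/250}(\tilde p)\subset \bar{B}_{1/240}(\tilde p)$ being exactly what is needed) to conclude by induction that the full product lies in $G$. If anything, you are somewhat more careful than the paper's write-up: the paper displays a ``monotone'' chain that peels one generator $e_{g_k}$ off the right at each step with all intermediate points staying in $\bar{B}_{1/250}(\tilde p)$, a form that is not automatic from $[\hat g,\tilde x]=[1,\tilde x']$, whereas you explicitly allow moves in both directions and keep track of the resulting partial products. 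One minor bookkeeping slip: since $\tilde x_{j+1}=h_j^{\mp1}\tilde x_j$, the cumulative product should be composed on the left (i.e.\ $g_{j+1}=h_j^{\mp1}g_j$, with the earliest $h$'s innermost), not in the order you wrote; this does not affect the argument, since Lemma~\ref{lemma4} applies symmetrically once you fix a consistent convention.
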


\begin{proof}
	By $[\hat{g}, \tilde{x}]=[1, \tilde{x}']$ and the definition of $\hat{G} \times_G \bar{B}_{1/250}(\tilde{p})$, there exist $k$ and $g_j \in G$ for $1 \le j \le k$ so that 
	\begin{align*}
		[\hat{g}, \tilde{x}] & = [[e_{g_1}e_{g_2}...e_{g_k}], \tilde{x}] \\
		& = [[e_{g_1}e_{g_2}...e_{g_{k-1}}], g_{k}\tilde{x}] \\
		& = [[e_{g_1}e_{g_2}...e_{g_{k-2}}], g_{k-1}g_{k}\tilde{x}] \\
		& = ... \\
		& = [1, g_1g_2...g_{k-1}g_k \tilde{x}] 
	\end{align*}
and $g_1g_2g_{k-1}g_k \tilde{x}= \tilde{x}'$. Here $g_{k-1}g_{k}\tilde{x}$ means $g_{k-1}(g_{k-1}(\tilde{x}))$. In particular, we have $g_k\tilde{x} \in \bar{B}_{1/250}(\tilde{p})$, $g_{k-1}g_k\tilde{x} \in \bar{B}_{1/250}(\tilde{p})$,..., and $g_1g_2...g_{k-1}g_k \tilde{x} \in \bar{B}_{1/250}(\tilde{p})$. Since $g_{k-1}g_k\tilde{x} \in \bar{B}_{1/250}(\tilde{p})$, we conclude that $g_{k-1}g_k \in G$ by Lemma \ref{lemma4}. Using Lemma \ref{lemma4} repeatedly, we see that $g= g_1g_2...g_{k-1}g_k \in G$ with $g(x)=\tilde{x}'$.
\end{proof}

Now we construct homeomorphisms between opens sets in $\hat{G} \times_G \bar{B}_{1/250}(\tilde{p})$ and the ones in $\bar{B}_{1/250}(\tilde{p})$.

\begin{lem}\label{local homeo}
	For any $\tilde{x}\in {B}_{1/250}(\tilde{p})$, there exists a neighborhood $U$ of  $[1,\tilde{x}]\in \hat{G} \times_G \bar{B}_{1/250}(\tilde{p})$ which is homeomorphic to a neighborhood $U'$ of $\tilde{x}$.
\end{lem}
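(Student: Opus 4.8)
The plan is to define the candidate homeomorphism as the map $\psi\colon [1,\tilde y]\mapsto \tilde y$ restricted to a suitable neighborhood of $[1,\tilde x]$, and then show it is well-defined, continuous, open, and bijective onto its image by combining Lemma \ref{lemma2} (which controls the equivalence relation near $[1,\tilde x]$) with Lemma \ref{lemma_nofix} (which forbids nontrivial elements of $G$ fixing a small ball). First I would fix $r>0$ small enough that $B_r(\tilde x)\subset B_{1/250}(\tilde p)$ and moreover that any $g\in G$ with $g\tilde y\in B_r(\tilde x)$ for some $\tilde y\in B_r(\tilde x)$ actually lies in $G$ (this is automatic once $r$ is small by Lemma \ref{lemma4}, since both points lie in $\bar B_{1/240}(\tilde p)$), and in addition that no nontrivial $g\in G$ maps all of $B_r(\tilde x)$ into $B_{1/250}(\tilde p)$ while fixing $\tilde x$ — indeed, if $g\tilde x=\tilde x$ then $g$ fixes a whole tangent direction's worth of points only if $g=1$ by the rigidity built into Lemma \ref{lemma_nofix}; more precisely I would instead arrange that the isotropy $G_{\tilde x}$ acts on $B_r(\tilde x)$ and shrink to $U'=B_r(\tilde x)\cap(\text{a }G_{\tilde x}\text{-invariant neighborhood})$ — but actually, since we only want $U$ homeomorphic to \emph{some} neighborhood $U'$ of $\tilde x$ and not $G_{\tilde x}$-equivariance here, the cleanest route is: choose $r$ so small that $g\tilde x = \tilde x$ together with $g\in G$ forces nothing problematic, and set $U := \pi(\{1\}\times B_r(\tilde x))$ where $\pi\colon \hat G\times \bar B_{1/250}(\tilde p)\to \hat G\times_G\bar B_{1/250}(\tilde p)$ is the quotient map.

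Next I would verify the four properties of $\psi\colon U\to U'$ with $U' = B_r(\tilde x)/G_{\tilde x}$ or, after possibly shrinking $r$ to kill isotropy beyond what fixes $\tilde x$, simply $U'=B_r(\tilde x)$. \textbf{Well-definedness and injectivity:} if $[1,\tilde y]=[1,\tilde y']$ with $\tilde y,\tilde y'\in B_r(\tilde x)$, Lemma \ref{lemma2} gives $g\in G$ with $g\tilde y=\tilde y'$; for the map $[1,\tilde y]\mapsto\tilde y$ to be well-defined I must know this $g$ is trivial on $B_r(\tilde x)$, which is where I use that $r$ is chosen small relative to the displacement obstruction in Lemma \ref{lemma_nofix} — i.e., a $g\in G$ with $g\tilde y = \tilde y'$ and $\tilde y,\tilde y'$ both $r$-close to $\tilde x$ forces $\rho_G$ to be small on $B_r(\tilde x)$, and no nontrivial subgroup-displacement that small is possible, so $g=1$ near $\tilde x$ and $\tilde y=\tilde y'$. \textbf{Surjectivity onto $U'$:} immediate, since every $\tilde y\in B_r(\tilde x)$ is $\psi([1,\tilde y])$. \textbf{Continuity:} $\psi\circ\pi$ is the second-coordinate projection restricted to $\{1\}\times B_r(\tilde x)$, which is continuous, and $\pi$ is a quotient map, so $\psi$ is continuous. \textbf{Openness:} $\pi$ restricted to $\{1\}\times B_r(\tilde x)$ is an open map onto $U$ because the $\sim$-saturation of $\{1\}\times(\text{open }V)$ is $\bigcup_{g\in G}\, \hat G\text{-translates}$, which pulls back to an open set — I would spell this out by noting $\pi^{-1}(\pi(\{1\}\times V)) = \bigcup_{g}\, i(g)\cdot(\{1\}\times gV)$-type unions, each term open.

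\textbf{The main obstacle} I expect is the well-definedness/injectivity step: one must genuinely rule out a nontrivial $g\in G$ with $g\tilde y=\tilde y'$ for $\tilde y,\tilde y'$ near $\tilde x$. Since we are \emph{not} assuming $\tilde x$ is fixed by $G$ and the isotropy $G_{\tilde x}$ may be nontrivial, the honest statement is that $U$ will be homeomorphic to $B_r(\tilde x)/G_{\tilde x}$, not to $B_r(\tilde x)$ itself, so I would phrase $U'$ accordingly; the content reduces to showing that for $r$ small, the only $g\in G$ pairing two points of $B_r(\tilde x)$ are those in $G_{\tilde x}$ (up to triviality near $\tilde x$), which follows because such a $g$ has $d(g\tilde x,\tilde x)\le d(g\tilde x, g\tilde y)+d(g\tilde y,\tilde x)=d(\tilde x,\tilde y)+d(\tilde y',\tilde x)<2r$, so $g$ lies in a fixed small neighborhood of $G_{\tilde x}$ in $G$, and then Corollary \ref{Cor1} plus Lemma \ref{lemma_nofix} pin down its behavior on $B_r(\tilde x)$ as $r\to 0$. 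The remaining steps are routine point-set topology about quotient maps.
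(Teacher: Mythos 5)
Your overall strategy -- define $[1,\tilde y]\mapsto \tilde y$ near $[1,\tilde x]$ and use Lemma \ref{lemma2} to control the equivalence relation -- is the right one and is essentially what the paper does (the paper takes $U=\pi(i(U_1)\times U_2)$ for a small symmetric neighborhood $U_1\ni 1$ in $G$ and $U_2\ni\tilde x$; since $\pi(i(U_1)\times U_2)=\pi(\{1\}\times U_1(U_2))$, this matches your $U=\pi(\{1\}\times (\text{nbhd of }\tilde x))$). However, three steps in your write-up are genuinely wrong or incomplete.

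\textbf{Well-definedness.} You worry about a nontrivial $g\in G$ with $g\tilde y=\tilde y'$ and try to rule it out with a displacement argument invoking Lemma \ref{lemma_nofix}. This does not work: a single element of small displacement near $\tilde x$ need not be trivial (the no-small-subgroups/no-fixed-ball results apply to subgroups fixing a ball, not to one element with small displacement), and indeed $G_{\tilde x}$ may well be nontrivial. But you never need this. Lemma \ref{lemma2} applied to $[1,\tilde y]=[1,\tilde y']$ gives $g\in G$ with $g\tilde y=\tilde y'$ \emph{and} $[e_g]=1$ in $\hat G$; then Lemma \ref{lem_inj} (injectivity of $i$) forces $g=1_G$, hence $\tilde y=\tilde y'$. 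In other words, $\pi$ restricted to $\{1\}\times B_r(\tilde x)$ is already injective, no geometric argument required.

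\textbf{The target $U'$.} Because the previous point gives genuine injectivity, $U'$ should be a neighborhood of $\tilde x$ in $\bar B_{1/250}(\tilde p)$ itself, \emph{not} $B_r(\tilde x)/G_{\tilde x}$. Your claim that ``the honest statement is that $U$ is homeomorphic to $B_r(\tilde x)/G_{\tilde x}$'' is incorrect and is a symptom of the faulty reasoning above. The entire point of the space $\hat G\times_G\bar B_{1/250}(\tilde p)$ is that it ``unwinds'' the $G$-action; the orbit-type identifications happen only after one further quotients by $\hat G$, which is Remark \ref{rem_2} and the proof of Theorem \ref{pseudo slice}, not this lemma.

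\textbf{Openness of $U$.} Your argument that $\pi^{-1}\bigl(\pi(\{1\}\times V)\bigr)$ is a union of ``$i(g)\cdot(\{1\}\times gV)$-type'' sets, ``each term open,'' is wrong: each such term is of the form $\{i(g)\}\times(\text{open set})$, and $\{i(g)\}$ is not open in $\hat G$ unless $\hat G$ is discrete. The union may still be open, but you have not shown it, and this is exactly where the paper's choice of $U=\pi(i(U_1)\times U_2)$ with $U_1$ an \emph{open} neighborhood of $1$ in $G$ pays off: one gets
\[
\pi^{-1}(U)=\bigcup_{g}\bigl(i(U_1)i(g)^{-1}\times (gU_2\cap \bar B_{1/250}(\tilde p))\bigr),
\]
a union of products of open sets. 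You would need to redo this step with a fattened first factor (or verify directly that your union is open), which is not ``routine point-set topology'' as claimed. Similarly, the continuity argument needs $\psi\circ\pi$ to be continuous on all of $\pi^{-1}(U)$, not only on the slice $\{1\}\times B_r(\tilde x)$, so there too the saturated set must be handled.
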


\begin{proof}
	Let $\epsilon=1/250 - d(\tilde{x},\tilde{p})>0$. Recall that we denote
	$$K=\{g'\in G|d(g'\tilde{p},\tilde{p})<1/200\}.$$
	Choose a small neighbourhood $U_1$ of $1$ in $K$ with $U_1^{-1}=U_1$ and $(U_1)^2 \subset K$; let $U_2\subset B_{1/250}(\tilde{p})$ be a neighborhood $\tilde{x}$.  We can choose $U_1$ and $U_2$ small enough such that $gg'z \in B_{\epsilon/3}(\tilde{x})$ for all $g,g' \in U_1, z \in U_2$. This guarantees that any geodesic between two points in $U_{1}(U_2)=\{gz | g\in U_1, z \in U_2 \}$ is contained in $B_{1/250}(\tilde{p})$. 
	
	Let $\pi: \hat{G} \times \bar{B}_{1/250}(\tilde{p}) \to \hat{G} \times_G \bar{B}_{1/250}(\tilde{p})$ be the quotient map and let $U=\pi (i(U_1) \times U_2)$. $U$ is open since 
	\begin{equation*}
		\pi^{-1}(U) =\bigcup ( i(U_1)i(g)^{-1} \times (g U_2 \cap \bar{B}_{1/250}(\tilde{p})))
	\end{equation*}
	is union of some open sets in $\hat{G} \times \bar{B}_{1/250}(\tilde{p})$, where the union is taken over all $g \in G$ with $g U_2 \cap \bar{B}_{1/250}(\tilde{p}) \neq \emptyset$.
	
	We define a natural map
	$$\phi: \bar{U}=\pi (i(\bar{U}_1) \times \bar{U}_2)\to \bar{U}_1(\bar{U}_2),\quad [i(g),z]\mapsto gz,$$
	where $\bar{U}_i$ means the completion of $U_i$. We shall show that $\phi$ is a homeomorphism. Then we can restrict $\phi$ on $U$ to obtain the desired homeomorphism between $U$ and $U'=U_1(U_2)$.
	
	We first prove $\phi$ is well defined. Assume $[i(g_1),z_1]=[i(g_2),z_2]$ for $g_1,g_2 \in \bar{U}_1$ and $z_1,z_2 \in \bar{U}_2$. Since $U_1(U_2) \subset B_{1/250}(\tilde{p})$, we have $[1,g_1z_1]=[1,g_2z_2]$. By lemma \ref{lemma2}, $\phi([i(g_1),z_1])=g_1z_2=g_2z_2=\phi([i(g_2),z_2])$. 
	
	Next, we show that $\phi$ is bijective. It is clear that $\phi$ is surjective. We prove the injection of $\phi$. Since $\bar{U}_1(\bar{U}_2) \subset B_{1/250}(\tilde{p})$, if $g_1z_1=g_2z_2 \in B_{1/250}(\tilde{p})$ for $g_1,g_2 \in \bar{U}_1$ and $z_1,z_2 \in \bar{U}_2$, we have
	\begin{equation*}
		[i(g_1),z_1]=[1,g_2z_2]=[1,g_1z_1]=[i(g_2),z_2].
	\end{equation*}
	Hence $\phi$ is injective.
	
	$\phi^{-1}$ is continuous: Assume $g_i z_i \to w\in \bar{U}_1(\bar{U}_2)$, where $g_i\in \bar{U}_1$ and $z_i \in \bar{U}_2$. We can write $w=gz$ for some $g \in \bar{U}_1$ and $z \in \bar{U}_2$. By passing to a subsequence, we may assume $g_i \to g' \in \bar{U}_1$ and $z_i \to z' \in \bar{U}_2$. Then $g'z'=w=gz$. By the injection of $\phi$, we see that $[i(g),z]=[i(g'),z']$. Hence 
	\begin{equation*}
		\phi^{-1}(g_iz_i) = [i(g_i),z_i] \to [i(g'),z']=[i(g),z]= \phi^{-1}(w). 
	\end{equation*}
	This shows the continuity of $\phi^{-1}$ on $\bar{U}_{1}(\bar{U}_2)$. 
	
	$\phi$ is continuous: Assume $[i(g_i),z_i] \to [i(g),z] \in \pi(\bar{U}_1 \times \bar{U}_2)$, where $g_i,g \in \bar{U}_1$ and $z_i,z \in \bar{U}_2$. By passing to a subsequence, we can assume $g_i \to g' \in \bar{U}_1$ and $z_i \to z' \in \bar{U}_2$. Then $[i(g_i),z_i] \to [i(g'),z'] = [i(g),z]$. So 
	\begin{equation*}
		\phi([i(g_i),z_i])=g_iz_i \to g'z'  = \phi([i(g'),z'])= \phi([i(g),z]).
	\end{equation*}
	This proves the continuity of $\phi$. 
	
	In conclusion, $\phi$ is the desired homeomorphism.
\end{proof}

We need the following to apply Palais's theorem.

\begin{lem} \label{lemma3}
	For all $[1,\tilde{x}]$ with $d(\tilde{x},\tilde{p})<1/250$, let $U$ be a neighborhood of $[1, \tilde{x}]$ as described in Lemma \ref{local homeo}. Then the set $\{\hat{g} \in \hat{G} | \hat{g}U \cap U \neq \emptyset \}$ has compact closure in $\hat{G}$.
\end{lem}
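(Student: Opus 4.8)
The plan is to reduce the statement about $\hat{G}$ to a statement about the pseudo-group $G$, using the local homeomorphism $\phi$ from Lemma \ref{local homeo} together with the fact (Lemma \ref{lem_embed}) that $i(G)$ contains a neighborhood of $1$ in $\hat{G}$. Concretely, recall that $U=\pi(i(U_1)\times U_2)$ with $U_1$ a small symmetric neighborhood of $1$ in $K$ satisfying $U_1^2\subset K$, and $U_2$ a small neighborhood of $\tilde{x}$ in $B_{1/250}(\tilde{p})$, chosen so that $U_1(U_2)\subset B_{\epsilon/3}(\tilde{x})\subset B_{1/250}(\tilde{p})$. First I would fix $\hat{g}\in\hat{G}$ with $\hat{g}U\cap U\neq\emptyset$ and unwind what this means: there are $g_1,g_2\in U_1$ and $z_1,z_2\in U_2$ with $\hat{g}[i(g_1),z_1]=[i(g_2),z_2]$, i.e. $[\hat{g}i(g_1),z_1]=[i(g_2),z_2]$, hence $[\,i(g_2)^{-1}\hat{g}i(g_1),\,z_1\,]=[1,z_2]$. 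Since $z_1,z_2\in U_2\subset B_{1/250}(\tilde{p})$, Lemma \ref{lemma2} applies and yields an element $h\in G$ with $i(g_2)^{-1}\hat{g}i(g_1)=[e_h]=i(h)$ and $h(z_1)=z_2$. Therefore $\hat{g}=i(g_2)i(h)i(g_1)^{-1}=i(g_2\,h\,g_1^{-1})$ provided $g_2 h g_1^{-1}\in G$, which we now check.

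The next step is to control $h$ and conclude $\hat{g}$ lies in a fixed compact subset of $i(G)$. From $h(z_1)=z_2$ with $z_1,z_2\in U_2\subset B_{\epsilon/3}(\tilde{x})$ we get $d(h z_1,z_1)<2\epsilon/3<1/125$; since $z_1\in B_{1/250}(\tilde p)$, the triangle inequality gives $d(h\tilde p,\tilde p)\le d(h\tilde p,h z_1)+d(h z_1,z_1)+d(z_1,\tilde p)$. Because $h$ is distance-preserving on $\bar B_{1/10}(\tilde p)$ and $z_1,h z_1\in B_{1/100}(\tilde p)$ (say), $d(h\tilde p,h z_1)=d(\tilde p,z_1)<1/250$, so $d(h\tilde p,\tilde p)$ is bounded by a small explicit constant, in particular $h\in G$ and even $h$ is $1/100$-displaced. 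Then $g_2 h g_1^{-1}$ is a composition of three elements each moving $\tilde p$ by less than $1/100$, so $d(g_2 h g_1^{-1}\tilde p,\tilde p)<3/100<1/30$, whence $g_2 h g_1^{-1}\in G$ by the closure property established in Section 3; moreover $\hat g=i(g_2 h g_1^{-1})$. Thus $\{\hat g\mid \hat g U\cap U\neq\emptyset\}\subset i(\,\{g\in G: d(g\tilde p,\tilde p)\le c\}\,)$ for some fixed $c<1/30$. The set $\{g\in G: d(g\tilde p,\tilde p)\le c\}$ is a closed, hence compact, subset of $G$ (compactness of $G$ in the compact-open topology was noted after Corollary \ref{slice}), and $i$ is a homeomorphism onto its image (Lemma \ref{lem_embed}); therefore its image is compact in $\hat G$, and so $\{\hat g\mid \hat g U\cap U\neq\emptyset\}$ has compact closure.

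I would carry the steps out in this order: (1) translate $\hat g U\cap U\neq\emptyset$ into the identity $[i(g_2)^{-1}\hat g i(g_1),z_1]=[1,z_2]$; (2) invoke Lemma \ref{lemma2} to produce $h\in G$ with $i(g_2)^{-1}\hat g i(g_1)=i(h)$ and $hz_1=z_2$; (3) estimate displacements to see $h$, and then $g_2hg_1^{-1}$, lie in a fixed small-displacement subset of $G$, using the Section 3 closure property to keep products inside $G$; (4) conclude $\hat g\in i(\{g\in G: d(g\tilde p,\tilde p)\le c\})$, which is compact because $i$ is an embedding and $G$ is compact. The main obstacle I anticipate is step (3): one must be careful that all the intermediate products $g_2 h$, $h g_1^{-1}$, $g_2 h g_1^{-1}$ remain \emph{defined} and land in $G$ (not merely in $G'$), which is exactly why $U_1\subset K$ with $d(g\tilde p,\tilde p)<1/200$ and the extra room between $1/100$, $1/30$, and $1/250$ were built into the construction; the displacement bookkeeping has to be done against these explicit radii, but it is routine once one tracks that $z_1,z_2\in U_2$ are genuinely within $B_{1/250}(\tilde p)$ and that $U_1(U_2)$ stays well inside $B_{1/250}(\tilde p)$.
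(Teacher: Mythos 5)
Your plan is close to the paper's, and the application of Lemma \ref{lemma2} to obtain $h\in G$ with $i(g_2)^{-1}\hat{g}\,i(g_1)=i(h)$ and $h z_1=z_2$ is correct. But step (3) has a genuine gap. You assert that from $d(g_2hg_1^{-1}\tilde{p},\tilde{p})<3/100<1/30$ it follows that $g_2hg_1^{-1}\in G$ ``by the closure property established in Section 3.'' That closure property only places the product in $G'$ (whose threshold is $1/30$); membership in $G$ requires displacement $\le 1/100$, and your bound $3/100$ exceeds $1/100$. If $g_2hg_1^{-1}$ lay in $G'\setminus G$, there would be no generator $e_{g_2hg_1^{-1}}$ in the presentation of $\hat{G}$, the equation $\hat{g}=i(g_2hg_1^{-1})$ would be meaningless, and you could not conclude $\hat{g}\in i(G)$. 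There is also a second, implicit gap in writing $i(g_2)i(h)i(g_1)^{-1}=i(g_2hg_1^{-1})$: the defining relations of $\hat{G}$ collapse $e_ae_b$ to $e_{ab}$ only when $a,b,ab$ all lie in $G$, so you also need an intermediate product such as $hg_1^{-1}$ to belong to $G$, which your displacement bookkeeping never addresses. (Your separate re-derivation that $h$ is $1/100$-displaced is superfluous, since Lemma \ref{lemma2} already gives $h\in G$.)

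Both issues are repairable with Lemma \ref{lemma4} rather than raw displacement bounds: take $w=g_1z_1\in U_1(U_2)\subset\bar{B}_{1/240}(\tilde{p})$; then $(hg_1^{-1})(w)=z_2\in\bar{B}_{1/240}(\tilde{p})$ gives $hg_1^{-1}\in G$, and $g_2\bigl((hg_1^{-1})(w)\bigr)=g_2z_2\in U_1(U_2)\subset\bar{B}_{1/240}(\tilde{p})$ gives $g_2(hg_1^{-1})\in G$; this element moves $w$ into $U_1(U_2)$, so you land exactly in the paper's precompact set $\{g\in G\mid gU_1(U_2)\cap U_1(U_2)\neq\emptyset\}$. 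The paper avoids all of this by absorbing $g_1,g_2$ into the \emph{second} coordinate rather than into $\hat{g}$: from $[\hat{g}\,i(g_1),\tilde{x}_1]=[i(g_2),\tilde{x}_2]$ one passes to $[\hat{g},g_1\tilde{x}_1]=[1,g_2\tilde{x}_2]$ (legitimate since $g_1\tilde{x}_1,g_2\tilde{x}_2\in U_1(U_2)\subset\bar{B}_{1/250}(\tilde{p})$), and Lemma \ref{lemma2} applied to $\hat{g}$ itself yields $\hat{g}=i(g)$ with $g\in G$ moving a point of $U_1(U_2)$ into $U_1(U_2)$ --- no pseudo-group products to track and no displacement estimates at all.
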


\begin{proof}
	We write $U$ as $\pi(i(U_1)\times U_2)$ as in the proof of Lemma \ref{local homeo}. We claim that $i:G\to \hat{G}$ provides a homeomorphism between $\{\hat{g} \in \hat{G} | \hat{g}U \cap U \neq \emptyset \}$ and $\{g \in G | gU_2(U_1) \cap U_2(U_1) \neq \emptyset \}$ (see Lemma \ref{lem_embed}). The latter one has compact closure in $G$ because $G$ is compact; consequently, the set $\{ \hat{g} \in \hat{G}| \hat{g}U \cap U \neq \emptyset \}$ has compact closure in $\hat{G}$ as well.
	
	We verify the claim. By Lemma \ref{lem_embed}, it suffices to check that $i$ maps one set exactly to the other. If $\hat{g}U \cap U \neq \emptyset$, then we can find $g_1,g_2 \in U_1$ and $\tilde{x}_1,\tilde{x}_2 \in U_2$ such that $[\hat{g}i(g_1),\tilde{x}_1]=[i(g_2),\tilde{x}_2]$. So $[\hat{g},g_1\tilde{x}_1]=[1,g_2\tilde{x}_2]$. By Lemma \ref{lemma2}, $\hat{g} = [e_g]=i(g)$ for $g \in G$ with $g g_1 \tilde{x}_1 = g_2 \tilde{x}_2$, that is $g$ satisfies $g U_1(U_2) \cap U_1(U_2) \neq \emptyset$. Conversely, if $g U_1(U_2) \cap U_1 (U_2) \neq \emptyset$, then we have $g_1,g_2 \in U_1$ and $\tilde{x}_1,\tilde{x}_2 \in U_2$ such that $gg_1\tilde{x}_1=g_2\tilde{x}_2$. Thus $i(g)[i(g_1),\tilde{x}]=[i(g_2),\tilde{x}]$, which shows $i(g)U \cap U \neq \emptyset$. 
\end{proof}

\begin{cor}\label{cor_slice}
	There is a $\hat{G}_{[1,\tilde{x}]}$-slice $\hat{S}$ at $[1,\tilde{x}]$.
\end{cor}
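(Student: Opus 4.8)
The plan is to obtain Corollary~\ref{cor_slice} as a direct application of Palais's slice theorem (Theorem~\ref{topo_slice}) to the $\hat G$-action on $\hat G\times_G\bar B_{1/250}(\tilde p)$ at the point $[1,\tilde x]$, with $d(\tilde x,\tilde p)<1/250$. The two structural inputs Palais needs are already in place: $\hat G$ is a Lie group (by Lemma~\ref{lem_embed} the set $i(G)$ contains a neighbourhood of $1$, and since $G$ has no small subgroups $\hat G$ has none either, so the Gleason--Yamabe theorem applies), and by Lemma~\ref{homeo} the group $\hat G$ acts on $\hat G\times_G\bar B_{1/250}(\tilde p)$ by homeomorphisms. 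So after a short point-set check the corollary will follow by quoting Theorem~\ref{topo_slice}.

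First I would record that $\hat G\times_G\bar B_{1/250}(\tilde p)$ -- or, to stay away from the boundary sphere, the open $\hat G$-invariant subset $\hat G\times_G B_{1/250}(\tilde p)$ -- is a $\hat G$-space in the sense of Section~2, i.e.\ that it is completely regular. This comes out of Lemma~\ref{local homeo}: any point can be written as $[\hat g,\tilde x]=\hat g[1,\tilde x]$, so via the homeomorphism $\hat g^{-1}$ (Lemma~\ref{homeo}) and then Lemma~\ref{local homeo} it has a neighbourhood homeomorphic to an open subset of the compact metric space $\bar B_{1/250}(\tilde p)$. Hence the space is locally metrizable, in particular completely regular, and the standing hypotheses of Theorem~\ref{topo_slice} are met.

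It remains to verify condition~(2) of Theorem~\ref{topo_slice} at $[1,\tilde x]$. For this I take $U$ to be the neighbourhood of $[1,\tilde x]$ produced in Lemma~\ref{local homeo}; Lemma~\ref{lemma3} states exactly that $\{\hat g\in\hat G\mid \hat g U\cap U\neq\emptyset\}$ has compact closure in $\hat G$. Theorem~\ref{topo_slice} then gives that $\hat G_{[1,\tilde x]}$ is compact and that there is a $\hat G_{[1,\tilde x]}$-slice $\hat S$ at $[1,\tilde x]$, which is precisely the assertion of Corollary~\ref{cor_slice}.

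I expect the only real subtlety -- and the place to be careful -- to be the bookkeeping in the second paragraph: confirming that the quotient (or the chosen open invariant piece of it) genuinely satisfies the standing assumptions of Palais's theorem, and that a slice found inside the open invariant subset $\hat G\times_G B_{1/250}(\tilde p)$ remains a slice in the ambient space. Everything else is a direct appeal to Lemmas~\ref{homeo}, \ref{local homeo}, \ref{lemma3} and Theorem~\ref{topo_slice}.
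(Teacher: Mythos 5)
Your proposal is correct and follows the same route as the paper: the paper's proof is a one-line deduction from Lemma~\ref{lemma3} (which supplies condition~(2) of Palais's theorem) together with Theorem~\ref{topo_slice}. The additional point-set checks you flag (complete regularity, Lie group structure of $\hat G$) are reasonable hygiene but are implicitly taken for granted in the paper.
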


\begin{proof}
   This follows directly from Lemma \ref{lemma3} and Theorem \ref{topo_slice}.
\end{proof}

\begin{rem}\label{rem_1}
	We can assume that the slice $\hat{S}$ at $[1,\tilde{x}]$ in Corollary is contained in $U$, the neighborhood of $[1,\tilde{x}]$ described in Lemma \ref{local homeo}. To see this, first note that by Lemma \ref{lemma2} that $\hat{G}_{[1,\tilde{x}]}= i(G_{\tilde{x}})$, so $[1, B_r(\tilde{x})]$ is $\hat{G}_{[1,\tilde{x}]}$-invariant for any $r$. Then $\hat{S} \cap [1, B_r(\tilde{x})]$ is still a $\hat{G}_{[1,\tilde{x}]}$-slice. By choosing $r$ sufficiently small, we may assume $\hat{S}$ is contained in $U$. 
\end{rem}

\begin{rem}\label{rem_2}
	By the proofs of Lemmas \ref{lemma2} and \ref{lemma3}, the homeomorphism $\phi$ is equivariant, that is, compatible with the group actions: for any $\hat{g} \in \hat{G}$ and $z \in U$, $\hat{g}z \in U$ iff $\hat{g}=i(g)$ for some $g \in G$ and $g(\phi(z)) \in U'$; moreover $g(\phi(z))=\phi(i(g)z)$. As a result, $U/\hat{G}$ is homeomorphic to $U'/G$. 
\end{rem}

Now we finish the proof of Theorem \ref{pseudo slice}.

\begin{proof}[Proof of Theorem \ref{pseudo slice}]
	We define $\hat{X}=\hat{G} \times_G \bar{B}_{1/250}(\tilde{p})/\hat{G}$ with quotient topology. Let $\tilde{x} \in B_{1/800}(\tilde{p})$. By choosing a smaller neighborhood, we may assume that $U'=\phi(U)$ in Lemma \ref{local homeo} is contained in $B_{1/800}(\tilde{p})$ where $\phi$ is the homeomorphism constructed in Lemma \ref{local homeo}. We will show that $S=\phi(\hat{S})$ is a $G_{\tilde{x}}$-slice at $\tilde{x}$. Since $\phi([1,\tilde{x}])=\tilde{x}$ and $\hat{S}$ is $\hat{G}_{[1,\tilde{x}]}$-invariant, we see that $\tilde{x} \in S$ and $S$ is $G_{\tilde{x}}$-invariant. It remains to show that $S/G_{\tilde{x}}$ is homeomorphic to a neighborhood of $x$. By Remark \ref{rem_2}, $S/G_{\tilde{x}}$ is homeomorphic to $\hat{S}/\hat{G}_{[1,\tilde{x}]}$. The latter one is homeomorphic to a neighborhood $V$ of $[[1,\tilde{x}]] \in  \hat{X}$ by the definition of a slice. Note that $U/\hat{G} \hookrightarrow \hat{X}$ is homeomorphic onto its image which contains $V$. $U/\hat{G}$ is homeomorphic to $U'/G$. $U'/G$ is homeomorphic to a neighborhood of $x$ in $B_{1/800}(p) \subset \bar{B}_{1/250}(\tilde{p})/G$ (also see Remark \ref{observation}). Combining all these homeomorphisms, we get that $S/G_{\tilde{x}}$ is homeomorphic to a neighborhood of $x$.   
\end{proof}

With Theorem \ref{pseudo slice}, we finish the proof of Theorem \ref{semi}, which follows the same idea as in Section 2.3.

\begin{proof}[Proof of Theorem \ref{semi}]
	Let $B_1(\tilde{p_i})$ be the $1$-ball in $\widetilde{B_4(p_i)}$ and let 
	$$G_i=\{g\in \Gamma_i| d(g\tilde{p_i},\tilde{p_i})\le 1/100\}.$$
	As seen in Section 3, we can pass to a subsequence and obtain convergence
	$$(\bar{B}_1(\tilde{p}_i),\tilde{x}_i,G_i) \overset{GH}\longrightarrow (\bar{B}_1(\tilde{p}),\tilde{x},G), \ \
(\bar{B}_1(p_i),x_i) \overset{GH}\longrightarrow (\bar{B}_1(p),x),$$
	where $x\in B_{1/800}(p)$ and $\tilde{x}\in B_{1/800}(\tilde{p})$ with $\pi(\tilde{x})={x}$. By Theorem \ref{PW} and the assumption that $B_1(\tilde{p}_i)$ is non-collapsing, for any small $\epsilon>0$, there is $\delta>0$ such that $\rho(t,\tilde{x})\le\epsilon$ for all $t\le \delta'$; in other words, any loop in $B_{\delta'}(x)$ is contractible in $B_\epsilon(x)$.
	
	By Theorem \ref{pseudo slice}, there is a slice $S\subset B_{1/800}(\tilde{p})$ at $\tilde{x}$ such that $S/G_{\tilde{x}}$ is homeomorphic to a neighborhood of $x$. Using this homeomorphism and the same argument as in Section 2.3, we can find $\delta>0$ such that $B_\delta(x)\subset B_{1/800}(x)$ and $\pi^{-1}(B_\delta(x))\cap S \subset B_{\delta'}(\tilde{x})$. For any loop $\gamma$ in $B_\delta(x)$, we can make use of the homeomorphism to lift $\gamma$ to a loop $\tilde{\gamma}$ in $\pi^{-1}(B_\delta(x))\cap S \subset B_\delta'(\tilde{x})$. For this $\tilde{\gamma}$, there is a homotopy $H$, whose image is in $B_\epsilon(\tilde{x})$, that contracts $\tilde{\gamma}$. Then $\pi\circ H$ is a homotopy that contracts $\gamma$ with $\mathrm{im}H\subset B_\epsilon(x)$. This shows that $\rho(t,x)\le \epsilon$ for all $t\le \delta$. Thus $\lim_{t\to 0}\rho(t,x)=0$.
\end{proof}

\section{Proof of Theorem \ref{desc_cover}}

We define $G^r$ by the $G$-action on $Y$ as below: 
\begin{center}
	$G^r=\langle \{ g\in G \ |\ d(q,gq)\le r$ for some $q\in Y\}\rangle$.
\end{center}
Note that by definition, $G^r$ is normal in $G$ for any $r$. We first prove the following gap lemma for $G^r$. One may compare this with \cite{KapovitchWilking2011} (see Lemma 2.4 and the 3rd paragraph in the proof of Theorem 9.1).

\begin{lem} \label{gap}
	There exists a number $\delta>0$ such that $G^r=G^\delta$ for all $r\in(0,\delta]$. Moreover, for this $\delta$, $G^\delta$ is the group generated by $G_0$ and all isotropy subgroups of $G$.
\end{lem}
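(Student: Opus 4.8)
The plan is to reduce both assertions to a single statement about individual elements of small displacement. Write $H$ for the group generated by $G_0$ and all isotropy subgroups of $G$. Two preliminary observations: since $gG_qg^{-1}=G_{gq}$ and $G_0$ is normal, $H$ is a \emph{normal} subgroup of $G$; and since $G$ is a Lie group, $G_0$ is open in $G$, so $H\supseteq G_0$ is open. Moreover $H\subseteq G^r$ for every $r>0$, because each isotropy subgroup consists of elements moving some point by $0\le r$, and $G_0$, being a connected topological group, is generated by any neighborhood $U$ of the identity, which we may take small enough that $d(\tilde{p},g\tilde{p})\le r$ for all $g\in U$; hence $U$, and therefore $G_0$, lies in $G^r$.

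Granting the claim below, the lemma follows. Suppose $\delta>0$ is such that every $g\in G$ with $d(q,gq)\le\delta$ for some $q\in Y$ lies in $H$. Then $G^\delta\subseteq H$ since $G^\delta$ is generated by such elements; combined with $H\subseteq G^\delta$ this gives $G^\delta=H$, which is the ``Moreover'' part. For $0<r\le\delta$ the monotonicity of $G^r$ in $r$ yields $H\subseteq G^r\subseteq G^\delta=H$, so $G^r=G^\delta$.

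It remains to prove the \textbf{Claim}: there is $\delta>0$ so that $d(q,gq)\le\delta$ for some $q\in Y$ implies $g\in H$. Suppose not; then there are $g_j\in G\setminus H$ and $q_j\in Y$ with $d(q_j,g_jq_j)\to 0$. Since $X=Y/G$ is compact, after passing to a subsequence $\pi(q_j)\to x\in X$; fixing $q_\infty\in\pi^{-1}(x)$ and using the definition of the quotient metric, we find $h_j\in G$ with $h_jq_j\to q_\infty$. Replacing $g_j$ by $h_jg_jh_j^{-1}$ and $q_j$ by $h_jq_j$, which is legitimate because $H$ is normal and conjugation by an isometry preserves displacements, we may assume $q_j\to q_\infty$, and hence $d(q_\infty,g_jq_\infty)\to 0$. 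Because $Y$ is proper and $G$ is closed in the Lie group $\mathrm{Isom}(Y)$, the argument in the proof of Corollary \ref{slice} shows that $\{g\in G: d(q_\infty,gq_\infty)\le 1\}$ has compact closure in $G$; so a subsequence of $g_j$ converges to some $g_\infty\in G$ with $g_\infty q_\infty=q_\infty$, i.e. $g_\infty\in G_{q_\infty}\subseteq H$. Since $H$ is open and $g_j\to g_\infty\in H$, we get $g_j\in H$ for all large $j$, contradicting $g_j\notin H$.

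The main obstacle is the middle step of the Claim: the points $q_j$ may wander off in the (possibly noncompact) space $Y$, and one must use cocompactness of the $G$-action together with the normality of $H$ to translate them back into a fixed compact region before the properness/compactness input from Corollary \ref{slice} applies. Once the short generators are pinned near an isotropy subgroup, openness of $H$ closes the argument, so it is worth isolating at the outset that $H$ is normal and open (equivalently, $G/H$ is discrete) and that the limit $G$-action is by isometries of a proper space with $G$ closed in $\mathrm{Isom}(Y)$.
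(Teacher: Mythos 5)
Your proof is correct. The logic of the overall reduction, the normality and openness of $H$, the use of cocompactness to translate the test points into a compact set, and the final extraction of a limit isometry all hold up.

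Where you diverge from the paper's route is in how you close the contradiction. The paper first proves a separate pointwise gap lemma (Lemma~\ref{gap_pt}): for each $q$ there is $\delta(q)>0$ such that $G(r,q)=G(\delta(q),q)=\langle G_0,G_q\rangle$ for $0<r\le\delta(q)$, using the fact that the orbit component $C_q=G_0\cdot q$ is open and closed in $G\cdot q$. It then feeds the translated and converging sequence $g_i'\to g$ with $gq=q$ into that pointwise lemma to conclude $g_i'\in H$ for large $i$. You instead observe at the outset that $H$ contains $G_0$, hence is an \emph{open} subgroup of the Lie group $G$, and then the final step becomes one line: $g_j\to g_\infty\in G_{q_\infty}\subseteq H$ and $H$ open forces $g_j\in H$ eventually. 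This isolates the Lie-theoretic input (openness of $G_0$) more explicitly and collapses the paper's two-lemma structure into a single argument; it also hands you the ``Moreover'' part immediately, since $G^\delta=H$ drops out of the same contradiction. The paper's organization has the advantage that the pointwise gap lemma is stated on its own and could be reused, but both proofs are essentially powered by the same two facts — $G$ is a Lie group and $G$ acts cocompactly on $Y$ — so your version should be viewed as a genuine simplification in presentation rather than a fundamentally new mechanism.
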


We first prove the single point version. We define
\begin{center}
	$G(r,q)=\langle\{g\in G\ |\ d(q,gq)\le r\}\rangle$.
\end{center}

\begin{lem} \label{gap_pt}
	Let $q\in Y$. There exists a number $\delta=\delta(q)>0$ such that $G(r,q)=G(\delta,q)$ for all $r\in(0,\delta]$. Moreover, for this $\delta>0$, $G(\delta,q)$ is the group generated by $G_0$ and $G_q$, the isotropy subgroup at $q$.
\end{lem}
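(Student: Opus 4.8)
The plan is to exploit the Lie group structure of $G$ together with properness of its action on $Y$. Since $G$ is a closed subgroup of $\mathrm{Isom}(Y)$, which is a Lie group by Theorem \ref{isom_lie}, $G$ is itself a Lie group, so its identity component $G_0$ is both open and normal in $G$; moreover the $G$-action on the proper space $Y$ is proper, so the isotropy subgroup $G_q$ is compact. Set $H:=\langle G_0,G_q\rangle$. Because $G_0$ is normal we in fact have $H=G_0G_q$, and because $G_0$ is open, $H$ is an open (indeed clopen) subgroup of $G$ and in particular an open neighborhood of the compact set $G_q$. The goal is to show $G(r,q)=H$ for all sufficiently small $r$, which gives both conclusions of the lemma at once.

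First I would prove the inclusion $G(r,q)\subseteq H$ for small $r$. Properness of the action makes $C_1:=\{g\in G\ |\ d(gq,q)\le 1\}$ compact: it is (a slice of) the preimage under $(g,y)\mapsto(gy,y)$ of the compact set $\bar B_1(q)\times\{q\}$. Since the sets $C_r:=\{g\ |\ d(gq,q)\le r\}$ are closed subsets of $C_1$ decreasing to $\bigcap_{r>0}C_r=G_q$ as $r\to 0$, a standard compactness argument shows that for every open $W\supseteq G_q$ there is $\delta>0$ with $C_\delta\subseteq W$: otherwise one picks $g_k\in C_{1/k}\setminus W$, extracts a subsequential limit $g\in C_1$ with $d(gq,q)=0$, hence $g\in G_q\subseteq W$, contradicting that $W$ is open and $g_k\to g$. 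Applying this with $W=H$ produces $\delta=\delta(q)>0$ with $C_\delta\subseteq H$, and since $H$ is a subgroup, $G(r,q)=\langle C_r\rangle\subseteq H$ for all $r\le\delta$.

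For the reverse inclusion $H\subseteq G(r,q)$, valid for any $r>0$: every $g\in G_q$ satisfies $d(gq,q)=0\le r$, so $G_q\subseteq G(r,q)$; and $V_r:=\{g\in G_0\ |\ d(gq,q)<r\}$ is an open neighborhood of the identity in the connected group $G_0$, so $\langle V_r\rangle=G_0$ (a connected topological group is generated by any neighborhood of $e$), whence $G_0\subseteq G(r,q)$. Combining the two inclusions gives $G(r,q)=G_0G_q=H$ for all $r\in(0,\delta]$, which is exactly the assertion, with $G(\delta,q)$ the group generated by $G_0$ and $G_q$; the dependence $\delta=\delta(q)$ enters only through the compactness step.

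I expect the only genuine content to be the properness of the $G$-action, which is what forces the sublevel sets $C_r$ to be compact and hence to shrink into the neighborhood $H$ of $G_q$; the rest is elementary topological-group bookkeeping (identity component open and normal, connected groups generated by a neighborhood of the identity, subgroups are closed under the generation operation). The truly nontrivial input sits upstream in Theorem \ref{isom_lie}, namely that $\mathrm{Isom}(Y)$ — and therefore $G$ — is a Lie group.
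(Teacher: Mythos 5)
Your argument is correct, and it reaches the conclusion by a genuinely different route from the paper. The paper works on the orbit $G\cdot q$: since $G$ is a Lie group and the action is proper, the orbit is (locally) a homogeneous manifold whose connected components are clopen, and $\delta$ is chosen so that $B_\delta(q)\cap G\cdot q$ lies in the component $C_q$ of $q$; the proof then shows $G(r,q)\cdot q=C_q$ for all $r\le\delta$ and pulls this equality of orbits back to an equality of groups via the decomposition $g=h\cdot(h^{-1}g)$ with $h^{-1}g\in G_q$. You instead work entirely in the group topology: you observe $H=G_0G_q$ is an open (hence clopen) subgroup of $G$ because $G_0$ is open, and you use compactness of $C_1=\{g\in G\ |\ d(gq,q)\le 1\}$ to show the sublevel sets $C_r$ shrink into any open neighborhood of $G_q$, in particular into $H$, while the reverse inclusion $H\subseteq G(r,q)$ is immediate from $G_q\subseteq G(r,q)$ and the fact that a connected group is generated by any neighborhood of the identity. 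This buys a slightly more streamlined proof: you get $G(r,q)=H$ in one pass, so both assertions of the lemma drop out simultaneously, whereas the paper first proves stabilization and then identifies the stable group in a second step. Both arguments rest on the same two inputs — $G$ is Lie, and the isometric action on the proper space $Y$ is proper (so displacement sets are compact, equivalently the orbit map $G/G_q\to G\cdot q$ is a homeomorphism) — but you trade the orbit picture for direct group-topology bookkeeping. The only small thing worth stating explicitly is that $G$ is a \emph{closed} subgroup of $\mathrm{Isom}(Y)$, which is what lets it inherit the Lie group structure from Theorem \ref{isom_lie}; this is standard for equivariant Gromov--Hausdorff limits and the paper takes it for granted as well.
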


\begin{proof}
	By Theorem \ref{isom_lie}, $G$ is a Lie group, thus each connected component of the orbit $G\cdot q$ is both open and closed in $G\cdot q$. Let $C_q$ be the component containing $q$. Pick $\delta>0$ sufficiently small such that $B_{\delta}(q)\cap G\cdot q \subset C_q$. For any $0<r\le\delta$, by our choice of $\delta$, we get $G(r,q)\cdot q \subset C_q$. It is also clear that $G(r,q)\cdot q$ is both open and closed in $G\cdot q$. Therefore, we conclude that $G(r,q)\cdot q=C_q$. 
	
	Now we check that $G(r,q)=G(\delta,q)$. It is clear that $G(r,q)\subset G(\delta,q)$. On the other hand, given any $g\in G(\delta,q)$, then $gq \in C_q$. Since $G(r,q)\cdot q=C_q$, we get $gq=hq$ for some $h\in G(r,q)$. Notice that $h^{-1}g$ fixes $q$, thus $h^{-1}g\in G(r,q)$. Consequently, $g=h\cdot(h^{-1}g)\in G(r,q)$.
	
	For the second part of the Lemma, we denote $H_q=\langle G_0, G_q \rangle$.
	Since $G_0$ is generated by elements in any small neighborhood of the identity element, we have $H_q\subset G(\delta,q)$. Conversely, for any element $g\in G(\delta,q)$, we have shown that $gq\in G(r,q)\cdot q=C_q=G_0\cdot q$. Then by the same argument is the former paragraph, $g$ can be written as $h\cdot (h^{-1}g)$ with $h\in G_0$ and $h^{-1}g\in G_q$.
\end{proof}

Now we prove Lemma \ref{gap}:
\begin{proof}[Proof of Lemma \ref{gap}]
	We first prove that $G^r$ stabilizes as $r\to 0$. We argue by contradiction. Suppose that there is $r_i\to 0$ such that $G^{r_{i+1}}$ is a proper subgroup of $G^{r_i}$ for each $i$. We pick $g_i\in G^{r_i}-G^{r_{i+1}}$ with $d(g_iq_i,q_i)\le r_i$ for some $q_i \in Y$. Let $H$ be the group generated by $G_0$ and all isotropy subgroups of $G$. It is clear that $H$ is a subgroup of $G^r$ for any $r>0$. Hence each $g_i$ is outside $H$. Let $C_{i}$ be the connected component of $G\cdot q_i$ that contains $q_i$. Since $g_i$ cannot be generated by $G_0$ and $G_{q_i}$, 
	$g_iq_i$ must be outside $C_i$. Let $\overline{X}$ be closure of a lift of $X$ in $Y$ and let $h_i\in G$ such that $q'_i=h_iq_i$ is a point in $\overline{X}\cap (G\cdot q_i)$. Then $g'_i=h_ig_ih^{-1}_i$ satisfies 
	$$d(g'_iq'_i,q'_i)=d(h_ig_iq_i,h_iq_i)=d(g_iq_i,q_i)\le r_i.$$
	Recall that $G^r$ is normal for each $r$, thus $g'_i\in  G^{r_i}-G^{r_{i+1}}$. After passing to a subsequence, $q_i$ converges to $q\in\overline{X}$ and $g'_i$ converges to $g\in G$ with $gq=q$. Let $\delta(q)>0$ as in Lemma \ref{gap_pt}. By Lemma \ref{gap_pt}, $g_i'\in G(\delta(q),q)=G(r,q)$ for all $r\le \delta(q)$. It follows that $g'_i\in H$ for $i$ sufficiently large. A contradiction.
	
	For the second half of the statement, the proof is similar to the one of Lemma \ref{gap_pt}. Let $H$ be the group generated by $G_0$ and all isotropy subgroups of $G$. It is clear that $H$ is a subgroup of $G^\delta$. Conversely, let $g$ be a generator of $G^\delta$. There is $q\in Y$ with $gq\subset C_q= G_0\cdot q$ Therefore, $g$ can be generated by elements in $G_0$ and $G_q$.
\end{proof}

With Lemma \ref{gap} and \cite[Theorem 3.10]{FukayaYamaguchi1992}, we are ready to prove Theorem \ref{desc_cover}.

\begin{proof}[Proof of Theorem \ref{desc_cover}]
	We first recall that it was shown in \cite{EnnisWei2006} that there is $\epsilon_0>0$ such that for all $\epsilon\le\epsilon_0$, $Y/H^\epsilon$ covers the universal cover of $X$, where $H^\epsilon$ comes from 
	$$(\widetilde{M}_i,\tilde{p},\Gamma^\epsilon_i)\overset{GH}\longrightarrow (Y,\tilde{p},H^\epsilon).$$
	Hence $Y/H^\epsilon$ is the universal cover of $X$.
	
	Let $\epsilon_0$ be the constant above and $\delta$ be the constant in Lemma \ref{gap}. Put $\eta=\min\{\epsilon_0,\delta\}/2$. Since $G^\eta$ contains $G_0$, $G/G^\eta$ is discrete. Because $X=Y/G$ is compact, we can apply \cite[Theorem 3.10]{FukayaYamaguchi1992} to find a sequence of subgroups $K_i$ converging to $G^\eta$. According to \cite{FukayaYamaguchi1992}, $K_i$ can be constructed as follows. Let $(f_i,\phi_i,\psi_i)$ represent an $\epsilon_i$ pointed equivariant Gromov-Hausdorff approximation $(\widetilde{M}_i,\tilde{p}_i,\Gamma_i)\to(Y,\tilde{p},G)$ for some $\epsilon_i\to 0$. Then we have a map $\phi_i:S_i(1/\epsilon_i)\to S_\infty(1/\epsilon_i)$, where
	\begin{center}
		$S_i(r)=\{g\in\Gamma_i\ |\ d(\tilde{p}_i,g\tilde{p}_i)\le r\}$.
	\end{center}
	Let $R$ be a sufficiently large number compared with $3D$, then $K_i$ is generated by $\{g\in S_i(R)\ |\ \phi_i(g)\in G^\eta\}$. 
	
	We claim that $K_i=\Gamma_i^\eta$ for all $i$ large. Let $g_i$ be a generator of $\Gamma_i^\eta$, that is, $g_i$ has displacement less than $\eta$ at certain point. Consequently, $\phi_i(g_i)\in G^{\eta+\epsilon_i}=G^\eta$. The last equality follows from our choice of $\eta$ and Lemma \ref{gap}. This shows that $\Gamma_i^\eta\subset K_i$. Conversely, suppose that there is a sequence $g_i\in S_i(R)$ with $\phi_i(g_i)\in G^\eta$. 
	Since $G^\eta = G^{\eta/4}$, we can write $\phi_i(g_i)=\prod_{j=1}^{l_i} g_{i,j}$, where $g_{i,j}$ are generators of $G^{\eta/4}$. Here we can assume that $l_i$ is bounded by a constant number depending on $R$ but not $i$ because $G^{\eta}$ is generated by $G_0$ and isotropy subgroups. Let $h_{i,j}\in\Gamma_i$ that is the element closest to $g_{i,j}\in G^{\eta/4}$, then $h_{i,j} \in \Gamma_i^{\eta/2}$ for $i$ large. Note that $\prod_{j=1}^{l_i} h_{i,j}$ and $\prod_{j=1}^{l_i} g_{i,j}$ share the same limit in $G$ as $i\to\infty$. We conclude that $g_i \in \Gamma_i^{\eta}$ for all $i$ large. This proves the claim that $\Gamma_i^\eta = K_i$.
	
	So far, we have shown that $(\widetilde{M}_i,\tilde{p}_i,\Gamma_i^\eta)\overset{GH}\longrightarrow(Y,\tilde{p},G^\eta)$. Together with the second part of Lemma \ref{gap} and the result in \cite{EnnisWei2006} as explained above, we finish the proof.
\end{proof}

If we further assume that $M_i$ is non-collapsing, then $G$ is discrete.
Thus
\begin{cor}
	Under the assumptions of Theorem \ref{desc_cover}, if in addition there is $v>0$ such that
	$\mathrm{vol}(M_i)\ge v,$
	then $H$ is the group generated by all isotropy subgroups of $G$.
\end{cor}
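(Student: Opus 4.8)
The plan is to derive the corollary from Theorem~\ref{desc_cover}: once we know that the non-collapsing hypothesis forces $G$ to be discrete, its identity component $G_0$ is trivial, so the group $H=\langle G_0,\ \text{all isotropy subgroups of }G\rangle$ of Theorem~\ref{desc_cover} is simply $\langle\,\text{all isotropy subgroups of }G\,\rangle$, which is precisely the assertion (and $Y/H$ is still the universal cover of $X$ by that theorem).

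The substance is therefore to show $G$ is discrete, and I would do this by a soft volume count rather than by the regularity theory of limit spaces. Fix $R>0$. Since $\widetilde{M}_i$ is complete with $\mathrm{Ric}\ge-(n-1)$, Bishop--Gromov gives $\mathrm{vol}(\bar B_{R+D}(\tilde{p}_i))\le V_{-1}(R+D)$, where $V_{-1}(\rho)$ is the volume of a $\rho$-ball in the simply connected $n$-dimensional space form of curvature $-1$. Let $F_i\subset\widetilde{M}_i$ be the Dirichlet fundamental domain of the $\Gamma_i$-action centered at $\tilde{p}_i$. Because $\mathrm{diam}(M_i)\le D$, every point of $F_i$ is within distance $D$ of $\tilde{p}_i$, so $F_i\subset\bar B_D(\tilde{p}_i)$ and $\mathrm{vol}(F_i)=\mathrm{vol}(M_i)\ge v$. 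For any $g\in\Gamma_i$ with $d(\tilde{p}_i,g\tilde{p}_i)\le R$ we then have $gF_i\subset\bar B_D(g\tilde{p}_i)\subset\bar B_{R+D}(\tilde{p}_i)$, and since the $\Gamma_i$-translates of $F_i$ have pairwise disjoint interiors,
\[
\#\{g\in\Gamma_i\ :\ d(\tilde{p}_i,g\tilde{p}_i)\le R\}\ \le\ \frac{\mathrm{vol}(\bar B_{R+D}(\tilde{p}_i))}{v}\ \le\ \frac{V_{-1}(R+D)}{v}=:N,
\]
a bound independent of $i$. Passing to the equivariant Gromov--Hausdorff limit $(\widetilde{M}_i,\tilde{p}_i,\Gamma_i)\to(Y,\tilde{p},G)$, the same bound holds for $\#\{g\in G\ :\ d(\tilde{p},g\tilde{p})<R\}$: if this set contained $N+1$ distinct elements they would be approximated by $N+1$ eventually-distinct elements of $\Gamma_i$ with displacement $\le R$ at $\tilde{p}_i$, contradicting the above. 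Hence $\{g\in G\ :\ d(\tilde{p},g\tilde{p})<R\}$ is a finite open neighborhood of the identity in the Hausdorff topological group $G\subset\mathrm{Isom}(Y)$; consequently $\{1\}$ is open in $G$, i.e.\ $G$ is discrete.

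With $G_0=\{e\}$ in hand, Theorem~\ref{desc_cover} finishes the proof. I do not expect a genuine obstacle here: the only delicate points are routine bookkeeping --- checking that the Dirichlet domain lies in $\bar B_D(\tilde{p}_i)$ and that its translates are essentially disjoint, and checking that the uniform cardinality bound survives the passage to the equivariant limit. (One could also argue more heavily: by Bishop--Gromov and the covering map, $\mathrm{vol}(B_1(\tilde{p}_i))$ is uniformly bounded below, so $Y$ and $X$ are non-collapsed Ricci limits of Hausdorff dimension $n$, and if $G_0\neq\{e\}$ the slice theorem would force $\dim_{\mathcal{H}}(Y/G)<n$, contradicting $\dim_{\mathcal{H}}X=n$; the volume count above is cleaner and avoids invoking the dimension theory of Ricci limit spaces.)
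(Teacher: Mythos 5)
Your proposal is correct, and its overall structure matches the paper's: the paper's proof of this corollary consists of the single assertion ``if $M_i$ is non-collapsing, then $G$ is discrete,'' after which the corollary follows immediately from Theorem~\ref{desc_cover} since $G_0=\{e\}$. You supply a sound and self-contained justification for the discreteness that the paper leaves unproved. Your volume-counting argument is valid: Bishop's inequality bounds $\mathrm{vol}(\bar B_{R+D}(\tilde p_i))$ from above, the Dirichlet domain $F_i$ centered at $\tilde p_i$ lies in $\bar B_D(\tilde p_i)$ because $\mathrm{diam}(M_i)\le D$ and the covering is a local isometry, $\mathrm{vol}(F_i)=\mathrm{vol}(M_i)\ge v$, and disjointness of the translates $gF_i$ gives a uniform bound $N(R)$ on $\#\{g\in\Gamma_i : d(\tilde p_i,g\tilde p_i)\le R\}$; this bound passes to the equivariant limit, so $\{g\in G : d(\tilde p,g\tilde p)<R\}$ is a finite open neighborhood of the identity in the Hausdorff group $G\subset\mathrm{Isom}(Y)$, forcing $G$ to be discrete. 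Your parenthetical alternative (the Hausdorff-dimension argument comparing $\dim Y$ with $\dim Y/G_0$) is the other standard route and is also correct, but as you note it relies on the dimension theory of non-collapsed Ricci limits, whereas the volume count is more elementary. Either way, once $G_0=\{e\}$ the statement is an immediate specialization of Theorem~\ref{desc_cover}, as you conclude.
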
	
	
\bibliographystyle{plain} 
\bibliography{temp}

\begin{thebibliography}{10}

\bibitem{CheegerColding1997}
Jeff Cheeger and Tobias~H. Colding.
\newblock On the structure of spaces with {R}icci curvature bounded below. i.
\newblock {\em J. Differential Geom.}, 46(3):406--480, 1997.

\bibitem{CheegerColding2000a}
Jeff Cheeger and Tobias~H. Colding.
\newblock On the structure of spaces with {R}icci curvature bounded below. ii.
\newblock {\em J. Differential Geom.}, 54(1):13--35, 2000.

\bibitem{CheegerColding2000b}
Jeff Cheeger and Tobias~H. Colding.
\newblock On the structure of spaces with {R}icci curvature bounded below. iii.
\newblock {\em J. Differential Geom.}, 54(1):37--74, 2000.

\bibitem{CheegerNaber2013}
Jeff Cheeger and Aaron Naber.
\newblock Lower bounds on {R}icci curvature and quantitative behavior of
  singular sets.
\newblock {\em Invent. Math.}, 191(2):321--339, 2013.

\bibitem{CheegerNaber2015}
Jeff Cheeger and Aaron Naber.
\newblock Regularity of {E}instein manifolds and the codimension 4 conjecture.
\newblock {\em Ann. of Math. (2)}, 182(3):1093--1165, 2015.

\bibitem{CRX2019}
Lina Chen, Xiaochun Rong, and Shicheng Xu.
\newblock Quantitative volume space form rigidity under lower {R}icci curvature
  bound {I}.
\newblock {\em J. Differential Geom.}, 113(2):227--272, 2019.

\bibitem{ColdingNaber2012}
Tobias~H. Colding and Aaron Naber.
\newblock {Sharp Hölder continuity of tangent cones for spaces with a lower
  Ricci curvature bound and applications}.
\newblock {\em Annals of Mathematics}, 176(2):1173--1229, 2012.

\bibitem{EnnisWei2006}
John Ennis and Guofang Wei.
\newblock Describing the universal cover of a compact limit.
\newblock {\em Differential Geometry and its Applications}, 24(5):554 -- 562,
  2006.

\bibitem{FukayaYamaguchi1992}
Kenji Fukaya and Takao Yamaguchi.
\newblock The fundamental groups of almost nonnegatively curved manifolds.
\newblock {\em Annals of Mathematics}, 136(2):253--333, 1992.

\bibitem{Gleason1952}
Andrew~M. Gleason.
\newblock Groups without small subgroups.
\newblock {\em Annals of Mathematics}, 56(2):193--212, 1952.

\bibitem{Huang2020}
Hongzhi Huang.
\newblock Fibrations, and stability for compact group actions on manifolds with
  local bounded {R}icci covering geometry.
\newblock {\em Front. Math. China}, 15(1):69--89, 2020.

\bibitem{HKRX2020}
Hongzhi Huang, Lingling Kong, Xiaochun Rong, and Shicheng Xu.
\newblock Collapsed manifolds with {R}icci bounded covering geometry.
\newblock {\em to appear in Transactions of the American Mathematical Society},
  2020.

\bibitem{KapovitchWilking2011}
Vitali {Kapovitch} and Burkhard {Wilking}.
\newblock {Structure of fundamental groups of manifolds with Ricci curvature
  bounded below}.
\newblock {\em arXiv:1105.5955}, 2011.

\bibitem{Palais1961}
Richard~S. Palais.
\newblock On the existence of slices for actions of non-compact {L}ie groups.
\newblock {\em Annals of Mathematics}, 73(2):295--323, 1961.

\bibitem{PanRong2018}
Jiayin {Pan} and Xiaochun {Rong}.
\newblock {Ricci curvature and isometric actions with scaling nonvanishing
  property}.
\newblock {\em arXiv:1808.02329}, 2018.

\bibitem{PanWei2019}
Jiayin {Pan} and Guofang {Wei}.
\newblock {Semi-local simple connectedness of non-collapsing Ricci limit
  spaces}.
\newblock {\em To appear in Journal of the European Mathematical Society,
  arXiv:1904.06877}, 2019.

\bibitem{SormaniWei2001}
Christina Sormani and Guofang Wei.
\newblock Hausdorff convergence and universal covers.
\newblock {\em Transactions of the American Mathematical Society},
  353(9):3585--3602, 2001.

\bibitem{SormaniWei2004}
Christina Sormani and Guofang Wei.
\newblock Universal covers for {H}ausdorff limits of noncompact spaces.
\newblock {\em Transactions of the American Mathematical Society},
  356(3):1233--1270, 2004.

\bibitem{Yamabe1953}
Hidehiko Yamabe.
\newblock A generalization of a theorem of {G}leason.
\newblock {\em Annals of Mathematics}, 58(2):351--365, 1953.

\end{thebibliography}
\end{document}